\documentclass[12pt]{amsart}
\usepackage{amsmath}
\usepackage{amssymb}
\usepackage{amsthm}
\usepackage{epsfig,graphicx}

\newcommand{\ignore}[1]{\relax}



\newcommand{\C}{\mathbb C}
\newcommand{\mo}{\mu}

\newcommand{\R}{\mathbb R}
\newcommand{\Z}{\mathbb Z}

\newcommand{\T}{\mathbb T}

\newcommand{\PP}{\mathcal P}

\newcommand{\ind}{\operatorname{ind}}
\newcommand{\lk}{\operatorname{lk}}

\newcommand{\Arg}{\operatorname{Arg}}


\newcommand{\ConvexHull}{\operatorname{Convex Hull}}
\newcommand{\lrot}{\operatorname{Rot}_{\operatorname{Log}}}
\newcommand{\larea}{\operatorname{Area}_{\operatorname{Log}}}

\newtheorem{lem}{Lemma}
\newtheorem{claim}{Claim}[section]

\newtheorem{theorem}[lem]{Theorem}
\newtheorem{corollary}[lem]{Corollary}
\newtheorem{thm}{Theorem}

\newtheorem{coro}[lem]{Corollary}
\newtheorem{prop}[lem]{Proposition}

\theoremstyle{definition}
\newtheorem{condition}[lem]{Condition}
\newtheorem{defn}[lem]{Definition}

\newtheorem{problem}[claim]{Problem}
\newtheorem{exa}[lem]{Example}

\theoremstyle{remark}
\newtheorem{rmk}[lem]{Remark}
\newtheorem{rem}[lem]{Remark}

\newcommand{\RR}{\mathcal R}
\newcommand{\QQ}{\mathcal Q}
\newtheorem{exam}[lem]{Example}

\newtheorem{ack}{Acknowledgment}

\renewcommand{\setminus}{\smallsetminus}

\newcommand{\ctor}{(\C^\times)^{2}}

\newcommand{\rtor}{(\R^\times)^{2}}

\newcommand{\ev}{\operatorname{ev}}

\newcommand{\conj}{\operatorname{conj}}

\newcommand{\dd}{\partial}
\newcommand{\am}{\mathcal{A}}

\newcommand{\cp}{{\mathbb C}{\mathbb P}}
\newcommand{\rp}{{\mathbb R}{\mathbb P}}
\newcommand{\tp}{{\mathbb T}{\mathbb P}}

\newcommand{\Log}{\operatorname{Log}}

\newcommand{\Vol}{\operatorname{Vol}}

\renewcommand{\setminus}{\smallsetminus}

\newcommand{\MM}{{\mathcal M}}

\newcommand{\Area}{\operatorname{Area}}

\newcommand{\OO}{\mathcal O}


\renewcommand{\Im}{\operatorname{Im}}

\newcommand{\Fr}{\operatorname{Sq}}

\newcommand{\ntropm}{N^{\partial,\operatorname{trop}}}

\ignore{
\newtheorem{theorem}{Theorem}

\newtheorem{condition}[theorem]{Condition}

\newtheorem{condition}{Balancing condition}
\newenvironment{proof}[1][Proof]{\noindent\textbf{#1.} }{\ \rule{0.5em}{0.5em}}
}

\newcommand{\ntrop}{N^{\operatorname{trop}}}

\begin{document}
\title[Quantum indices of real plane curves]{Quantum indices and refined enumeration
of real plane 
curves}
\author{Grigory Mikhalkin}
\address{Universit\'e de Gen\`eve,  Math\'ematiques, Battelle Villa, 1227 Carouge, Suisse}
\begin{abstract}
We associate a half-integer number, called {\em the quantum index}, to algebraic curves in the real plane
satisfying to certain conditions.
The area encompassed by the logarithmic image of such curves is equal to $\pi^2$ times the quantum index
of the curve and thus has a discrete spectrum of values.
We use the quantum index to refine
enumeration of real rational
curves in a way consistent with
the Block-G\"ottsche invariants from tropical enumerative geometry.
\end{abstract}
\maketitle
\section{Introduction.}
\subsection{Quantum index}
Geometry of real algebraic curves in the plane is one of the most classical
subjects in Algebraic Geometry.

It is easy to see that
the logarithmic image $\Log(\R C^\circ)\subset\R^2$
of any real algebraic curve $\R C^\circ\subset\rtor\subset\R^2$
under the map \eqref{Log} bounds
a region of finite 
area in $\R^2$ (see Figures \ref{p}, \ref{rconics},
\ref{rcircles} for some examples of $\Log(\R C^\circ)$
in degrees 1 and 2). Furthermore, this area
is universally bounded from above for all curves
of a given degree by the Passare-Rullg\aa rd
inequality \cite{PaRu} for the area of amoebas.

E.g. if $\R C^\circ$ is a circle contained in the positive
quadrant $(\R_{>0})^2$ then
it bounds a disk $D\subset (\R_{>0})^2$, $\dd D=\R C^\circ$.
The area of the disk $D$ is $$\int\limits_Ddxdy=\pi r^2,$$
where $r$ is its radius. 
Clearly,
$\Area(D)$ may be arbitrarily large. 
In the same time it can be proved that
the area of $\Log D$ is 
$$\int\limits_D\frac{dx}x\frac{dy}y<\pi^2.$$
The inequality can be established either
through direct computation or as a corollary
of the Passare-Rullg\aa rd upper bound
on the area of amoeba \cite{PaRu}.
Thus this {\em logarithmic} area of $D$
stays bounded no matter how large
is the radius $r$.
In the same time it is clear that $\Area(\Log(D))$
can assume any value between 0 and $\pi^2$.

In this paper we impose the following conditions
on an algebraic curve $\R C\subset\rp^2$
(in the main body of the paper it is also formulated
for other toric surfaces in place of $\rp^2$)
so that such continuous behavior of the logarithmic
area is no longer possible.

Namely, we assume that $\R C$ is an irreducible
curve of type I (see subsection \ref{oldintro11}).
Then according to \cite{Rokhlin} $\R C$ 
comes with a canonical orientation (defined
up to simultaneous reversal in all components
of $\R C$). 
This enables us to consider
the signed area (with multiplicities) $\larea(\R C)$
bounded by $\Log(\R C^\circ)\subset\R^2$.
Unless one of the two possible complex orientations
of $\R C$ is chosen, $\larea(\R C)$ is only 
well-defined up to sign.

The curve $\R C\subset\rp^2$ is the zero set of an irreducible homogeneous
polynomial $f(x_0,x_1,x_2)$. 
For simplicity in the introduction we assume that $\R C$ is disjoint
from the points $(0:0:1)$, $(0:1:0)$, $(1:0:0)$.
The restriction of $f$ to $x_j=0$, $j=0,1,2$, is a homogeneous polynomial $f_j$
in two variables responsible for the intersection of $\R C$ with
the three coordinate axes of $\rp^2$. 
We say that $\R C$ has real or purely imaginary coordinate intersections
if for any (complex) zero $(z_a:z_b)$ of $f_j$ we have 
$(\frac{z_b}{z_a})^2\in\R$. 
%
Theorem 1 asserts that in this case $\larea(\R C)$
must be divisible by $\frac{\pi^2}2$ and therefore cannot
vary continuously.
The number $k=\frac{\larea(\R C)}{\pi^2}$ is thus 
a half-integer number naturally associated to 
the curve. We call it {\em the quantum index}
of $\R C$.
\setcounter{thm}{0}
\begin{thm}[special case for $\rp^2$]
Let $\R C\subset\rp^2$ be a real curve of degree $d$
and type I enhanced with a complex orientation.
If $\R C$ has real or purely imaginary coordinate intersection
then $$\larea(\R C)=k{\pi^2}$$
with $k\in\frac12\Z$ and
$-\frac{d^2}2\le k\le \frac{d^2}2.$
\end{thm}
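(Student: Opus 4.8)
The plan is to realise $\larea(\R C)$ as the integral over $\R C^\circ$ of a closed $1$-form that extends to the ambient complex curve, and then to evaluate that integral by Stokes' theorem on the complex half bounded by $\R C$, localising everything to the points over the coordinate lines.

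\emph{Step 1: the logarithmic area as a period.} Write $z,w$ for the two affine toric coordinates, $x=\log|z|$, $y=\log|w|$, and on $(\C^\times)^2$ consider the real $1$-form $\alpha=\re\bigl(\log z\cdot\tfrac{dw}{w}\bigr)$. Since $\log z=x+i\Arg z$ and $\tfrac{dw}{w}=dy+i\,d(\Arg w)$ we get $\alpha=x\,dy-\Arg z\cdot d(\Arg w)$; along $\R C^\circ$ the arguments of $z$ and $w$ are locally constant (equal to $0$ or $\pi$), so $\alpha$ restricts there to the area primitive $x\,dy$, and with the chosen complex orientation $\larea(\R C)=\oint_{\R C^\circ}\alpha$. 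The crucial feature of $\alpha$ is that $d\alpha=\re\bigl(\tfrac{dz}{z}\wedge\tfrac{dw}{w}\bigr)$ is the real part of a holomorphic $2$-form, hence vanishes identically on the complex curve $C$: the form $\alpha$ is closed on $C\cap(\C^\times)^2$, and its only defect there is that it is multivalued because of $\log z$.

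\emph{Step 2: Stokes on the complex half.} Because $\R C$ is of type I and carries the chosen complex orientation, it is the oriented boundary $\partial C^+$ of one of the two halves $C^{\pm}$ of $C\smallsetminus\R C$. Pass to the normalization $\nu\colon\tilde C\to C\subset\rp^2$; then $z,w$ become rational functions and $\tfrac{dz}{z},\tfrac{dw}{w}$ meromorphic $1$-forms with at worst simple poles, all situated over the three coordinate lines of $\rp^2$, in a finite set $P\subset\tilde C$. Now $\alpha$ is a closed, multivalued real $1$-form on $\tilde C^+\smallsetminus P$. Applying Stokes to $\tilde C^+$ with small disks (resp. half-disks, for the points of $P$ lying on $\R C$) removed around the points of $P$, and cutting along a system of arcs that renders $\log z$ single-valued, the bulk integral of $d\alpha$ vanishes and one obtains
\[
\larea(\R C)=\sum_{p\in P}\bigl(\text{integral of }\alpha\text{ over a small loop around }p\bigr)+\bigl(\text{contributions of the cuts}\bigr),
\]
where around a non-real $p$ the loop is a full circle and around a point $p\in\R C$ it is a half-circle.

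\emph{Step 3: local contributions and the hypothesis.} Near $p\in P$ pick a local coordinate $t$ with $z=\alpha_p t^{a_p}(1+O(t))$ and $w=\beta_p t^{b_p}(1+O(t))$, where $a_p=\ord_p z$ and $b_p=\ord_p w$ (integers, not both $0$, recording which coordinate lines pass through $p$). A residue computation shows that, after a symmetrization absorbing the coordinate-dependent pieces into the cut terms, the full-loop contribution at $p$ equals $\pi\arg\bigl(w^{a_p}/z^{b_p}\big|_p\bigr)$, and $w^{a_p}/z^{b_p}$ is precisely the monomial whose value at $p$ is the relevant coordinate of the intersection point of $C$ with the coordinate line(s) through $p$. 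The hypothesis that all coordinate intersections are real or purely imaginary says exactly that this value lies in $\R^\times\cup i\R^\times$, so its argument lies in $\tfrac{\pi}{2}\Z$; a real point $p\in\R C$ is forced to have a real coordinate and hence a half-loop contributing a multiple of $\tfrac{\pi^2}{2}$, while a purely imaginary coordinate can occur only at a non-real $p$, whose full loop then contributes $\pi\cdot(\pm\tfrac\pi2)=\pm\tfrac{\pi^2}{2}$. The cut contributions are handled by the same computation at the endpoints of the cuts, the jump of $\alpha$ across a cut being $-\,\re(2\pi i\,\tfrac{dw}{w})$ times the monodromy integer of $\log z$. Summing, $\larea(\R C)\in\tfrac{\pi^2}{2}\Z$, i.e. $k=\larea(\R C)/\pi^2\in\tfrac12\Z$.

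\emph{Step 4: the bound, and the main obstacle.} Since $\alpha$ agrees with the single-valued form $x\,dy$ on $\R C^\circ$, Stokes also yields $\larea(\R C)=\int_{C^{+}\cap(\C^\times)^2}dx\wedge dy$, the signed area (counted with multiplicity) of the half-amoeba $\Log(C^+)$; this is bounded in absolute value by the area of the full amoeba of $C$, which for degree $d$ in $\rp^2$ is at most $\pi^2$ times the area of the Newton triangle, namely $\pi^2\cdot\tfrac{d^2}{2}$, by the Passare--Rullg\aa rd inequality \cite{PaRu}. Hence $-\tfrac{d^2}{2}\le k\le\tfrac{d^2}{2}$. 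The delicate point throughout is making Steps 2--3 rigorous on the non-compact surface $\tilde C^+\smallsetminus P$: the boundary integral $\oint_{\R C^\circ}\alpha$ has logarithmically divergent pieces near the punctures, which must be shown to cancel between the two arcs of $\R C^\circ$ meeting at a common point of $P$ using the complex orientation; the branch of $\log z$ must be fixed consistently so that the puncture residues and the cut contributions are each individually well-defined; and the analysis at the torus-fixed points of $\rp^2$ (where $a_p$ and $b_p$ are both nonzero), including the points at infinity, must be carried out with both exponents present — this is exactly where the purely imaginary coordinates, such as the circular points at infinity of a circle, enter and produce the genuinely half-integral values of $k$.
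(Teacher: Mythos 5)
Your starting point is the same identity the paper uses (the restriction of $\re\left(\frac{dz}{z}\wedge\frac{dw}{w}\right)$ to the curve vanishes, so logarithmic area can be traded for argument area), but from there you take a Stokes/residue route, and the two places where the theorem actually gets proved are exactly the places you leave unproved. In Step 3 the assertion that, ``after a symmetrization'', each puncture loop contributes $\pi\arg\left(w^{a_p}/z^{b_p}|_p\right)\in\tfrac{\pi^2}{2}\Z$, and that the cut terms are handled ``by the same computation'', is the whole content of half-integrality and is not established. The jump of $\alpha$ across a cut on which $\log z$ changes by $2\pi i n$ is $-2\pi n\,d(\arg w)$, so a cut contributes $2\pi n$ times a difference of values of $\arg w$ at its two ends; the hypothesis on coordinate intersections controls only the argument of the finite nonzero coordinate at the points of $\C\bar C\cap\dd\C\Delta$, not the value of $\arg w$ at the end of an arbitrarily chosen cut (at a puncture where $w$ has a zero or pole, $\arg w$ has no limit at all), and at boundary punctures you also have the divergent $\log^2$ pieces you acknowledge. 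So membership in $\tfrac{\pi^2}{2}\Z$ term by term is not available without a global bookkeeping you have not supplied. The paper's proof is built precisely to avoid this local analysis: it integrates $d\arg z\wedge d\arg w$ over the half $S$ and observes that the hypothesis forces $\dd\bar S$ to map into geodesic segments through the corners of the pillowcase $P=(\R/\pi\Z)^2/\sigma$, so the pushforward is a closed $2$-cycle and the integral is an integer (a degree) times $\Area(P)=\pi^2/2$; no residues, cuts, or divergence cancellations appear.

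The bound in Step 4 has a genuine gap as well. By definition $\larea(\R C)=\int\ind_{\R C}(x)\,dx$, i.e.\ the area is weighted by the winding number, which equals the local degree of $\Log$ restricted to the half $C^+$; nothing in your argument shows this multiplicity is at most $1$ in absolute value, and the Passare--Rullg\aa rd inequality bounds only the unweighted area of the amoeba $\am$ as a subset of $\R^2$, not the multiplicity-weighted signed area of $\Log(C^+)$ (the two coincide for simple Harnack curves, but that is a special feature of the extremal case). The paper obtains the bound on the argument side instead: $2k$ is the degree of $2\Arg$ on $\C C^\circ\setminus\R C^\circ$, a generic fiber lies in $\C C^\circ\cap\conj_{\tilde a}(\C C^\circ)$, and Kouchnirenko--Bernstein bounds that intersection by $2\Area(\Delta)=d^2$, giving $|k|\le d^2/2$. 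To complete your proof you would have to either prove the winding-number bound (which is not true in general) or replace Step 4 by a fiber-counting argument of this kind.
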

To our knowledge this classical-looking result is new even in
the case $d=2$.
Meanwhile the special case of $d=1$ is well-known.
The identity $|\larea(\R C)|=\frac{\pi^2}2$ in the case of
lines was used by Mikael Passare \cite{Pa-zeta}
in his elegant new proof of Euler's formula
$\zeta(2)=\frac{\pi^2}6$. 
Another known special case of Theorem \ref{thm-larea} is
the case of the so-called {\em simple Harnack curves}
introduced in \cite{Mi00}. As it was shown in
\cite{MiRu} these curves have the maximal possible
value of $|\larea(\R C)|$ for their degree (equal
to $\frac{d^2}2\pi^2$).
Simple Harnack curves have many
geometric properties \cite{Mi00}.
By now these curves have appeared
in a number of situation outside of real algebraic geometry,
in particular in random perfect matchings of 
bipartite doubly periodic planar graphs of
Richard Kenyon, Andrei Okounkov
and Scott Sheffield \cite{KeOkSh}.
The quantum index of Theorem 1 can be interpreted
as a measure of proximity of a real curve to 
a simple Harnack curve.

Half-integrality of the quantum index $k$ may be
explained through appearance of $2k$ as the degree
of some map as exhibited in Proposition \ref{2kdeg}.
In accordance with this interpretation
Theorem \ref{thm-lrot} computes the quantum index 
through the degree of the real logarithmic Gau{\ss}
map of $\R C$.

Theorem 3 studies the quantum index in the special
case when $\R C$ is not only of type I, but
also of {\em toric type I}
(Definition \ref{def-ttI}).
This condition implies that all coordinate intersections
of $\R C$ are real. In this case the quantum index
may be refined to the {\em index diagram}
(Definition \ref{idiagram}), 
a closed broken lattice curve $\Sigma\subset\R^2$
well-defined up to a translation by $2\Z^2$.

The broken curve $\Sigma$ is an immersed
multicomponent curve with each component
corresponding to a component of the compactification
$\R\bar C$ of $\R C^\circ$
defined by its Newton polygon $\Delta$.
The complex orientation of $\R C$ induces an 
orientation of the closed broken curve $\Sigma$
so that we may compute the signed area
$\Area\Sigma$ inside $\Sigma$ which is 
a half-integer number as the vertices of $\Sigma$
are integer.
\setcounter{thm}{2}
\begin{thm}[simplified version]
If $\R C\subset\rp^2$ is a real algebraic curve of toric type I
enhanced with a choice of its complex orientation
then
its quantum index $k$ coincides with $\Area\Sigma.$
\end{thm}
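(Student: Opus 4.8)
The plan is to relate the logarithmic area $\larea(\R C)$ directly to the signed area of the index diagram $\Sigma$ by decomposing both into contributions localized near the ends of $\R C^\circ$, i.e., near the coordinate axes. Since $\R C$ is of toric type I, all coordinate intersections are real, so each end of $\R C^\circ$ in $\rtor$ runs off to infinity along a ray in the $\Log$-picture whose direction is the (negated) primitive outer normal of the corresponding edge of $\Delta$, with multiplicity equal to the lattice length of that edge. First I would recall from the proof of Theorem~1 the description of $\larea(\R C)$ as an integral of the pullback of $d\log|z_1|\wedge d\log|z_2|$ over the oriented surface $\R C^\circ$ (with its complex orientation); the key point established there is that this $2$-form is exact, $d\log|z_1|\wedge d\log|z_2| = d(\log|z_1|\, d\log|z_2|)$, so by Stokes the area reduces to a sum of boundary integrals over small loops encircling the punctures, which is where the $\frac{\pi^2}{2}$-quantization came from.

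The second step is to identify, for each end, the corresponding edge of the broken curve $\Sigma$. The index diagram $\Sigma$ is built (Definition~\ref{idiagram}) by concatenating, in the cyclic order dictated by the boundary $\dd\Delta$, vectors that are the primitive edge directions of $\Delta$ scaled by their lattice lengths, but with a correction term recording how the argument of $\R C^\circ$ behaves as one approaches the axis — precisely the data that the real (or purely imaginary) coordinate intersections make discrete. I would show that the boundary contribution of a given end to $\larea(\R C)$ equals $\frac{\pi^2}{2}$ times the signed lattice area swept by the corresponding segment of $\Sigma$ together with the chord back to the origin; summing over all ends and telescoping the chords gives exactly $\pi^2 \Area\Sigma$. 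Concretely, on an end the restriction of $\log|z_j|$ tends to $-\infty$ linearly in a local parameter while $\Arg(z_j)$ is asymptotically constant (real or imaginary intersection), and the residue-type computation of $\oint \log|z_1|\, d\log|z_2|$ around the puncture produces the product of the asymptotic slopes in $\Log$-space with the jump in argument — which is the cross-product formula for the area of the triangle spanned by consecutive vertices of $\Sigma$ and the origin.

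The third step is bookkeeping the multi-component structure and the orientations: $\R\bar C$ may be reducible (over $\R$ or after splitting type~I components), $\Sigma$ is correspondingly a union of immersed closed curves, and the complex orientation of $\R C$ is what simultaneously orients all of them; I would check that the orientation conventions in Definition~\ref{idiagram} match the boundary orientation coming from the complex orientation in Stokes' theorem, so that no global sign is lost. One must also verify that $\Sigma$ closes up — this is the tropical balancing/Menelaus-type relation among the asymptotic arguments of the ends, which holds because $\R C^\circ$ is an honest algebraic curve (the product of all coordinate intersection points, counted with multiplicity, is constrained).

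The main obstacle I anticipate is the second step: pinning down the exact local contribution of each end and showing it is the signed area of the triangle $(O, v_i, v_{i+1})$ rather than that quantity shifted by some end-dependent constant. This requires a careful asymptotic analysis of $\R C^\circ$ near each axis (a Puiseux/Newton-polygon expansion), controlling both $\log|z_j|$ and $\Arg(z_j)$ to the order needed, and then showing that the ambiguity-constants cancel in the telescoping sum because consecutive edges of $\Sigma$ share a vertex. A secondary subtlety is that the $\Log$-image of $\R C^\circ$ can be non-embedded, so $\larea$ is a signed/weighted area and one must be scrupulous that the Stokes computation tracks the same multiplicities that go into $\Area\Sigma$; I expect this to follow formally once the area is written as the integral of an exact form over the oriented normalization of $\R C^\circ$, but it needs to be stated carefully.
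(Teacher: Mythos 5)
Your plan stays entirely on the real curve and in $\Log$-space, and this is where it breaks down. The quantity $\larea(\R C)$ is quantized not because of any residue computation of $\oint \log|z_1|\,d\log|z_2|$ around the punctures of $\R C^\circ$ (note also that near an end with real coordinate intersection $\log|z_1|$ tends to a finite, continuously varying constant while $\log|z_2|\to-\infty$, so these local boundary terms diverge individually and depend on the actual positions of the intersection points, not on lattice data); it is quantized because on the half $S\subset\C\tilde C\setminus\R\tilde C$ selected by the complex orientation the holomorphic form $\frac{dx}{x}\wedge\frac{dy}{y}$ restricts to zero, so $d\log|x|\wedge d\log|y|=d\arg x\wedge d\arg y$ there, and $\larea(\R C)=\int_{S^\circ}\Arg^*(d\theta_1\wedge d\theta_2)$ (this is Lemma \ref{lem-larg}; your description of the proof of Theorem \ref{thm-larea} as a Stokes reduction to loops around punctures is not what happens, and a $2$-form cannot be integrated over the $1$-dimensional $\R C^\circ$ in any case). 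The surface $S$ never appears in your argument, yet it is indispensable: the vertices $v_i$ of $\Sigma$ are defined (Definition \ref{idiagram}) through a global lift $\tilde l$ of the argument map of $S$ to the universal cover, which exists precisely because of the toric type I hypothesis (triviality of \eqref{homS}). No asymptotic analysis at the ends of $\R C^\circ$ can recover these lattice points, since the local data only sees the quadrants, i.e.\ the parities of $v_i$. Your only use of the hypothesis is the claim that ``$\Sigma$ closes up'' by a Menelaus-type relation valid for any algebraic curve; that is false as a substitute for toric type I: the quartic of Figure \ref{nHa} is of type I, its naively built diagram closes up and has area $\pm\Area\Delta$, yet its quantum index is not $\pm\Area\Delta$ --- which is exactly how the paper detects that it is \emph{not} of toric type I. So an argument with only your inputs would prove a false statement. (Your normalization is also off: the sum of the triangle areas $(O,v_i,v_{i+1})$ is $\Area\Sigma$, so a contribution of $\frac{\pi^2}{2}$ per triangle gives $\frac{\pi^2}{2}\Area\Sigma$, not the required $\pi^2\Area\Sigma$; test it on a line. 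And the edges of $\Sigma$ follow the cyclic order of the intersection points along the oriented curve $\R\tilde C$, not the cyclic order of $\dd\Delta$ --- otherwise every curve would have maximal index.)

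The correct mechanism, which your sketch gestures at but in the wrong space, is Stokes in argument space after lifting: by toric type I, $\Arg|_{\tilde S}$ lifts to $\R^2$; the boundary of the lifted image consists of the points $\pi v_i$ (images of the real arcs, where the arguments are constant) joined by the geodesic segments $\pi m_e\nuu(E)$ coming from the ends (Proposition \ref{IDexists}), i.e.\ it is $\pi\Sigma$; hence $\int_{S^\circ}\Arg^*(d\theta_1\wedge d\theta_2)$ equals the signed area enclosed by $\pi\Sigma$, giving $\larea(\R C)=\pi^2\Area\Sigma$ directly (this is the proof of Theorem \ref{thm-index}). If you want to keep a ``sum over ends'' flavor, apply Stokes to the bounded primitive $\theta_1\,d\theta_2$ on the lifted $\tilde S$: the real arcs contribute nothing and each end contributes a shoelace term $\tfrac{\pi^2}{2}\,v_i\times m_e\nuu(E)$ --- but this is a computation on $S$ with lifted arguments, not on $\Log(\R C^\circ)$, and it cannot be carried out without first securing the lift, i.e.\ without using toric type I in an essential way.
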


Each edge of $\Sigma$ corresponds to an intersection
of $\R \bar C$ with a toric divisor
of the toric variety $\R\Delta$ corresponding to
the Newton polygon $\Delta$ and thus to a side
$E\subset\dd\Delta$. If this intersection is transversal
then the corresponding oriented edge of $\Sigma$
is given by the primitive integer outer normal
vector ${\vec n}(E)$. More generally it is given
by  ${\vec n}(E)$ times the multiplicity of the intersection.
This makes finding the index diagram $\Sigma$
and thus the quantum index $k$ especially easy
at least in the case of rational curves with 
real coordinate intersections
(cf. e.g. Figures \ref{rconics} and \ref{c-diagrams}).

The index diagram $\Sigma$ can be viewed as
a non-commutative version of the Newton polygon $\Delta$:
it is made from the same elements (the vectors ${\vec n}(E)$
taken $\#(E\cap\Z^2)-1$ times) as $\dd\Delta$, but
the real structure on $\R\bar C$ gives those pieces
a cyclic order (in the case of connected $\R \bar C$)
or divides these elements into
several cyclically ordered subsets.

Recall that Mikael Forsberg, Mikael Passare and August Tsikh
in \cite{FoPaTs} have defined the amoeba index map
which is a locally constant map on
$\R^2\setminus\am$, the complement
of the {\em amoeba} $\am=\Log(\C C^\circ)$
of the complexification $\C C^\circ$ of $\R C^\circ$.
Each connected component of $\R^2\setminus\am$
is associated a lattice point of the Newton polygon $\Delta$.

For toric type I curves the formula \eqref{indR} defines
the real index map so that each connected
component of the normalization $\R\tilde C^\circ$
or a solitary real singularity of $\R C^\circ$
acquires a real index which is
a lattice point of the convex hull
of the index diagram $\Sigma$.
Theorem \ref{thm-indAR} 
computes the amoeba index map $\ind$
in terms of the linking number with
the curve $\R C^\circ$ enhanced with the real indices.

\subsection{Refined real enumerative geometry in the plane}
The second part of the paper is devoted to applications
of the quantum index of real curves introduced 
in this paper to enumerative geometry over complex
and real numbers.
The space of planar projective rational curves of degree $d$
is $(3d-1)$-dimensional. Thus given a {\em generic}
configuration $\PP$ of $3d-1$ 
points in the projective plane we expect a finite set
${\mathcal S}_d$ of such curves.
What we can do next with this set
depends on our choice of ground field.

Our two main choices are the fields $\C$ and $\R$
of complex and real numbers.
For both of these cases we choose
$\PP\subset\rp^2$ 
generically
and denote with ${\mathcal S}^{\C}_d$ 
(resp., ${\mathcal S}^{\R}_d$)
the finite set of all planar projective rational
curves of degree $d$
defined over $\C$ (resp., over $\R$) passing through $\PP$.
It is easy to see that the cardinality 
$N_d^{\C}=\#({\mathcal S}^{\C}_d)$ does not
depend on the choice of $\PP$ (even if $\PP$ is
chosen generically in $\cp^2$ rather than $\rp^2$).
In the same time the cardinality
$\#({\mathcal S}^{\R}_d)$ depends on the
choice of generic configuration $\PP$ and
{\em a priori} only the parity of this set 
remains invariant.
 
According to the seminal result of Welschinger \cite{We}
the curves $\R C\in{\mathcal S}^{\R}_d$ come with natural
{\em signs} $w(\R C)=\pm 1$ so that the integer number
$$N^{\R}_d=\sum\limits_{\R C\in {\mathcal S}^{\R}_d}w(\R C)$$
is invariant of the choice of $\PP$.
The number $N^{\R}_d$ is thus known as the {\em Welschinger
invariant} and is the fundamental notion
of real enumerative geometry. 
Itenberg, Kharlamov and Shustin in \cite{IKS1} 
have established non-trivial lower bounds on 
$\#({\mathcal S}^{\R}_d)$ with the help of $N^{\R}_d$. 

Both integer numbers $N^{\C}_d$ and $N^{\R}_d$
were simultaneously computed with the help of 
passing to the tropical limit in \cite{Mi05}.
Namely, $N^{\C}_d$ and $N^{\R}_d$ can be presented
as sums of multiplicities of corresponding tropical curves
passing through a generic configuration of points
in the tropical plane. The tropical curves
are the same in both cases,
however the rules for
defining their $\C$ and $\R$ multiplicities
are different,
so the sums  $N^{\C}_d$ and $N^{\R}_d$ are 
different as well.

With the help of this presentation
Block and G\"ottsche in \cite{BlGo} have proposed
combining the numbers $N^{\C}_d$ and $N^{\R}_d$
into a single number $\ntrop_d$,
which is no longer an integer number, but
an integer $q$-number
(a Laurent polynomial in $q$ with positive integer coefficients
invariant under the substitution $q\mapsto\frac 1q$).
The value at $q=1$ is capable
to recover the number of complex curves while
the value at $q=-1$ should be capable to recover
the number of real curves in the same enumerative problem.
E.g. there are $q+10+q^{-1}$ many of rational
cubic curves passing through 8 generic points in $\rp^2$.
In the same time there are 12 curves over $\C$ 
and 8 curves over $\R$ (if we count real curves
with the Welschinger sign \cite{We}).

Conjecturally (see \cite{GoSh}) the $q$-refinement
$\ntrop_d$ of the integer number $N^{\C}_d$ agrees
with the $\chi_y$-genus refinement of Severi degrees
proposed by G\"ottsche and Shende in \cite{GoSh}.
Also this refinement looks to be at least vaguely resemblant
of the refinements of Donaldson-Thomas invariants
considered by Kontsevich and Soibelman
\cite{KoSo} and Nekrasov and Okounkov \cite{NeOk}
in some other frameworks (in particular, for 3-folds).

The quantum index allows us to obtain a refined enumeration
of planar curves entirely within classical
real algebraic geometry of the plane
with the help of Theorem \ref{Rinv}.
Once again for simplicity
we discuss only the special case of the projective
plane here in the introduction while in the main body of
the paper the theorem is formulated for other toric surfaces
as well.

Recall that the space of rational curves of degree $d$
in $\rp^2$ is $(3d-1)$-dimensional.
Thus we expect a finite number of such curves
if we impose on them $3d-1$ conditions.
Let us choose a generic configuration of $3d-1$ points
on the three coordinate axes of $\rp^2$ (the $x$-axis,
the $y$-axis and the $\infty$-axis) so that each axis
contains no more than $d$ points: e.g. there are $d$
generic points on the $x$- and $y$-axis and $3d-1$
generic points on the $\infty$-axis.
The elementary generalization
of the classical Menelaus theorem
(see Figure \ref{FigMenelaus}) found already by Carnot
\cite{Carnot} (later further generalized as the
{\em Weil reciprocity law})
ensures that there is a unique $3d$th point on the
$\infty$-axis such that any irreducible curve of degree $d$
passing through our $3d-1$ points also passes through
the $3d$th point. The resulting
configuration of $3d$ points
on the union 
of three coordinate axes
varies in a $(3d-1)$-dimensional family
of {\em Menelaus configurations}.

We define the square map $\Fr:\cp^2\to\cp^2$
by $\Fr(z_0:z_1:z_2)=(z_0^2:z_1^2:z_2^2)$.
An irreducible rational curve $\R C\subset\rp^2$
such that $\Fr(\C C)$ passes through 
a Menelaus configuration $\PP$
is of type I and has real or purely imaginary
coordinate intersection. Thus the quantum index
$k$ is well-defined.

In \eqref{Rk} we define
$R_{d,k}(\PP)$ (here we write $d$ instead
of $\Delta$ since we restrict ourselves
to the special case of $\rp^2$
in the introduction)
as one quarter of the number of
irreducible oriented rational curves $\R C\subset\rp^2$
of degree $d$ and quantum index $k$ such that
$\Fr(\C C)$ passes through $\PP$.
Each curve $\R C$ here is taken with the sign
\eqref{sigma} which is a modification of
the Welschinger sign \cite{We}.
Note that such curves come in quadruples
thanks to the action of the deck transformations
of the 4-1 covering $\Fr|_{\rtor}:\rtor\to\rtor$.
This is the reason for including $\frac 14$ 
in the definition of $R_{d,k}(\PP)$.
The points of $\PP$ contained in the closure
of the positive quadrant $(\R_{>0})^2$ (positive points)
correspond to real coordinate axes intersections
of $\R C$, other
(negative) points correspond to
purely imaginary coordinate axes intersections.

The image of each component of $\C C\setminus\R C$
under $\Fr$ may
be viewed as an open holomorphic disk $F$ in $\cp^2$ 
with the boundary contained in the closure
$L=\overline{(\R_{>0})^2}$ of the positive quadrant.
The subspace $L\subset\cp^2$
is a {\em Lagrangian submanifold
with boundary}. The positive points
of $\PP$ correspond to tangencies of $\dd F$ and $\dd L$
while the negative points of $\PP$
correspond to intersections of the open disk $F$
with the coordinate axes of $\rp^2$ away from $\dd L$.
From this viewpoint $R_{d,k}(\PP)$ is the number
of certain holomorphic disks whose boundary is
contained in $L$, a framework widely used in
symplectic geometry. An unconventional feature 
is presence of boundary in the contractible 
(and thus orientable) Lagrangian surface $L$.
Positive points of $\PP$ are contained
in the boundary $\dd L$. The holomorphic disks
are tangent to $\dd L$ at these points.
Negative points of $\PP$ are disjoint from $L$
and and thus from the boundaries of the holomorphic disks. 

The number of negative points on three
coordinate axes is given by
$\lambda=(\lambda_1,\lambda_2,\lambda_3)$
with $\lambda_j\le d$.
\setcounter{thm}{4}
\begin{thm}
The number $R_{d,k,\lambda}=R_{d,k}(\PP)$
is invariant of the choice of $\PP$ and depends only on $d$,
$k$ and $\lambda$.
\end{thm}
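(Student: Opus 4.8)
The plan is to establish invariance by a deformation (wall-crossing) argument: show that $R_{d,k}(\PP)$ is unchanged as $\PP$ moves along a generic path in the space of Menelaus configurations with a fixed sign distribution $\lambda$, and that this space is connected, so that any two such $\PP$ can be joined by a generic path. Connectedness is elementary — on each coordinate axis the positive (respectively negative) points range over the configuration space of points in an arc, which is connected, and the Menelaus point depends continuously (in fact rationally) on the others — and I would not dwell on it.

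Next I would describe the family over configuration space. For generic $\PP$ the set $\RR(\PP)$ of irreducible oriented rational curves $\R C\subset\rp^2$ of degree $d$ with $\Fr(\R C)$ through $\PP$ is finite; each such $\R C$ is nodal, meets $\dd\rp^2$ transversally and only at the prescribed real or purely imaginary points, avoids the torus-fixed points, and — by the discussion preceding the theorem together with Theorem~\ref{thm-larea} — carries a well-defined quantum index $k(\R C)\in\frac12\Z$ and a modified Welschinger sign $\sigma(\R C)$ from \eqref{sigma}, both of which are locally constant on the finite cover of configuration space formed by the pairs $(\PP,\R C)$ with $\R C\in\RR(\PP)$. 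Over a generic path $\PP_t$ this cover is a smooth $1$-manifold away from a finite set of wall parameters $t_\ast$, so it suffices to check that the $\sigma$-weighted count in each quantum-index class is preserved across each wall.

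I would sort the walls into two kinds. At an \emph{interior wall} some curve in $\RR(\PP_{t_\ast})$ degenerates inside $\rtor$: it acquires a real cusp or a real tacnode, a conjugate pair of nodes collides onto a real curve, or the curve becomes reducible $\R C=\R C_1\cup\R C_2$ with $\deg\R C_i=d_i$, $d_1+d_2=d$. In all of these cases the affected curves appear or disappear in pairs, and the classical Welschinger-type local models (the cusp/two-node exchange, the conjugate-node exchange, and the nodal-smoothing count at a reducible curve) show that the \emph{total} signed count is unchanged; the additional input needed here is that the two curves of each such pair have the \emph{same} quantum index. This holds because $k=\larea(\R C)/\pi^2$, the signed $\Log$-area of the logarithmic image equipped with its complex orientation varies continuously in the Hausdorff topology as the curves merge, and $k$ is confined to $\frac12\Z$; hence $k$ is constant from both sides and agrees with its value at the degenerate limit, so each $R_{d,k,\lambda}$ is unchanged across interior walls. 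At a \emph{boundary wall} the curve stays smooth and varies continuously but interacts non-generically with the toric boundary — it becomes tangent to a coordinate axis, a node migrates onto $\dd\rp^2$, or the curve passes through a torus-fixed point — so no curve enters or leaves $\RR(\PP_t)$ and $\sigma$ does not change, while the index diagram $\Sigma$ (and hence $k=\Area\Sigma$, or its purely imaginary analogue) can a priori change: at a passage through a corner two consecutive edges of $\Sigma$ get contracted to their sum, so $k$ threatens to jump. Showing that the $\sigma$-weighted count in each quantum-index class is nonetheless preserved across a boundary wall is the heart of the argument; it requires using the special structure — that the counted curves come in quadruples under the deck group of $\Fr|_{\rtor}$, that $\Fr(\R C)$ is tangent to each coordinate axis at all of its intersection points with it, and that the Menelaus relation constrains the configuration — so that any change in $k$ at a corner passage is matched by a compensating redistribution of the contributions.

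Summing the constancy of each $R_{d,k,\lambda}$ across all walls along a generic path, and using connectedness of the space of Menelaus configurations with fixed $\lambda$, yields $R_{d,k}(\PP_0)=R_{d,k}(\PP_1)$ and hence the theorem. I expect the boundary walls — in particular the passages through torus-fixed points, where the index diagram genuinely changes — to be the main obstacle. A complementary tool would be a tropical correspondence theorem identifying $R_{d,k,\lambda}$, for $\PP$ near the tropical limit, with a refined count of tropical curves whose quantum-index-graded multiplicity is of Block-G\"ottsche type, the invariance of the latter being the standard tropical wall-crossing.
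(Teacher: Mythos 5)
Your overall skeleton (a generic path in the space of Menelaus configurations with fixed $\lambda$, finiteness of the count for generic $\PP$, local constancy of $k$ and $\sigma$ along irreducible families, and reduction of the irreducible degenerations to Welschinger's local invariance) is the same as the paper's. The genuine gap is in your treatment of the reducible wall. When a reducible curve $\R D=\R D_1\cup\R D_2$ occurs at $t_0$, the nearby counted curves are smoothings of $\Fr^\Delta(\R D)$ at the various nodes $q\in I=\Fr^\Delta(\R D_1)\cap\Fr^\Delta(\R D_2)$; each branch of $\ev^{-1}(\gamma)$ \emph{ends} at the wall instead of continuing through it, and its quantum index is $\pm k(\R D_1)\pm k(\R D_2)$ according to whether the smoothing respects the chosen orientations of the two components. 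So your argument "$\larea$ varies continuously and $k$ is rigid in $\frac12\Z$, hence $k$ agrees with its value at the degenerate limit" only identifies the limiting index of each individual branch (and that limit is not a single number, since the reducible limit carries different relative orientations for different smoothings); it does not pair branches on the two sides of the wall, and in general both values $k_1+k_2$ and $k_1-k_2$ occur on each side. What is actually needed, and what the paper supplies, is (i) the determination of which smoothing at which node produces curves on which side of the wall (Lemma \ref{sign}, proved by a Kouchnirenko--Bernstein count applied to two smoothings that are tangent at the $m$ points of $\PP$), and (ii) the balance $\#(I_+)=\#(I_-)$, which holds because the intersections lie in the contractible quadrant $(\R_{>0})^2$ so the total signed intersection vanishes; this equality is precisely what makes the number of curves of each quantum index, with their signs $\sigma$, agree on the two sides. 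Neither ingredient appears in your proposal, so the step that carries the refined (index-by-index) invariance is missing.

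Conversely, the step you single out as "the heart of the argument" --- boundary walls where a curve becomes tangent to a coordinate axis, acquires a node on $\dd\R\Delta$, or passes through a torus-fixed point --- cannot occur here. A curve with Newton polygon $\Delta_d$ meets $\dd\C\Delta$ with total multiplicity $m=3d$, and it is required to pass through a point of $(\Fr^{\Delta})^{-1}(p_j)$ over each of the $m$ distinct points $p_j\in\PP_t$; any extra tangency, boundary node, or passage through a corner would push the total intersection multiplicity above $m$. (Degenerations with smaller Newton polygon are excluded for the same reason, which is how the paper obtains properness of $\ev$ on stable curves.) Hence the index diagram cannot "lose a corner" along the family, and the compensation mechanism you defer to a hoped-for tropical correspondence is not needed; the real difficulty sits entirely at the reducible wall, which your argument does not resolve.
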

In particular, $R_{d,k}(\PP)$
depends only on $d$ and $k$
when all points of $\PP$ are positive.

For a positive point $p\in\PP$ the inverse image
$\Fr^{-1}(p)$ consists of two points:
a positive point $p_+\in\dd L$
and a negative point $p_-\notin\dd L$.
The condition $\Fr(\C C)\ni p$ 
is equivalent to the condition that $\R C$ passes through
$p_+$ or $p_-$.
Note that the invariance claimed in Theorem 5 relies
on including
into $R_{d,k}(\PP)$ both of these possibilities.
If we leave out only the curves passing through 
$p_+$ (or $p_-$) as in \eqref{Rtk}
then the resulting sum $\tilde R_{d,k}(\PP)$ is no longer
invariant under deformations of $\PP$.
Nevertheless, a partial invariance result
for $\tilde R_{d,k}(\PP)$ is provided
by Theorem \ref{tildeR}. 

The generating function
$R_d(\lambda)=\sum\limits_k R_{d,k,\lambda}q^k$
defined in \eqref{Rd}
is a Laurent polynomial in $q^{\frac 12}$.
As such it can be compared with the modification
$\ntropm_d$ of the Block-G\"ottsche
refined tropical invariants $\ntrop_d$
where we take for $\PP$ a generic Menelaus configuration
of points in the boundary $\dd\tp^2=\tp^2\setminus\R^2$
rather than a generic configuration of points
in $\R^2$. Namely, the number $\ntropm_d$ is given by
\eqref{BGDeltadef}, where $\Delta$ is a triangle with
vertices $(0,0)$, $(d,0)$ and $(0,d)$.
The last theorem of the paper is an identity
between $R_d=R_d(0,0,0)$ 
and $\ntropm_d$.
\setcounter{thm}{6}
\begin{thm}[special case of $\rp^2$]
$$R_d= (q^{\frac12}-q^{-\frac12})^{3d-2}\ntropm_d.$$
\end{thm}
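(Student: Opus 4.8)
The plan is to evaluate $R_d=R_d(0)$ by degenerating the Menelaus configuration to a tropical limit and matching the outcome with the Block--G\"ottsche count. Write $[m]_q=\dfrac{q^{m/2}-q^{-m/2}}{q^{1/2}-q^{-1/2}}$ for the $q$-deformed vertex multiplicity, so that $BG_d=\sum_\Gamma\prod_V[\mult(V)]_q$ is summed over the trivalent rational tropical curves $\Gamma$ of degree $d$ through a generic tropical point configuration. By the invariance statement of Theorem~\ref{Rinv} we may choose $\PP$ freely, so I would take a one-parameter family $\PP_t\subset\dd\rp^2$, $t\to+\infty$, whose rescaled logarithms $\frac1{\log t}\Log(\PP_t)$ converge to a generic tropical Menelaus configuration $\PP_\infty$ on the boundary of the tropical plane. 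A correspondence theorem of Mikhalkin--Shustin type, adapted to point conditions lying on the toric divisors and to the $4\!:\!1$ covering $\Fr|_\rtor\colon\rtor\to\rtor$, should for $t$ large put the irreducible oriented real rational curves $\R C\subset\rp^2$ of degree $d$ with $\Fr(\R C)\ni\PP_t$ in bijection with the pairs $(\Gamma,\beta)$, where $\Gamma$ runs over the finitely many trivalent rational tropical curves of degree $d$ through $\PP_\infty$ and $\beta$ is a compatible choice of real Viro patchworking data along $\Gamma$; being a trivalent rational tree with $3d$ unbounded ends, each such $\Gamma$ has exactly $3d-2$ vertices.

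Next I would show that, in this limit, both the modified Welschinger sign and the quantum index localize at the vertices of $\Gamma$, so that the contribution of a fixed $\Gamma$ to $R_d=\sum_k R_{d,k}q^k$ factors over its vertices. For the sign this is the patchworking description of $\sigma$ in the spirit of Itenberg--Kharlamov--Shustin: $\sigma(\R C)=\prod_V\sigma_V(\beta)$ with $\sigma_V(\beta)$ local at $V$. For the quantum index I would use Theorem~\ref{thm-lrot}, according to which $2k$ is the degree of the real logarithmic Gau{\ss} map of $\R C$; as $t\to\infty$ the curve breaks into pieces, one over each vertex, joined by thin cylindrical necks over the bounded edges, the logarithmic Gau{\ss} map being almost constant along the necks, so that $2k=\sum_V 2k_V$ with $k_V$ a local quantum index attached to $(\Gamma,\beta)$ at $V$. (One may equivalently pass to the toric type~I curve $\Fr(\R C)$ and use Theorem~\ref{thm-ttI}: the index diagram $\Sigma$ is assembled edge by edge along $\Fr(\R C)$, and its signed area splits as the sum of the signed areas of the local turns of $\Sigma$ at the vertices of $\Gamma$.)

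It then remains to evaluate the local contribution. For one trivalent vertex $V$ of multiplicity $m=m_V$, I claim that after incorporating the $\tfrac14$-normalization and the identifications coming from the deck group of $\Fr|_\rtor$,
$$\sum_{\beta\ \text{local at}\ V}\sigma_V(\beta)\,q^{\,k_V(\beta)}\;=\;(q^{1/2}-q^{-1/2})\,[m]_q\;=\;q^{m/2}-q^{-m/2}.$$
For the elementary triangle ($m=1$: a local line, $|\larea|=\tfrac{\pi^2}2$, $k_V=\pm\tfrac12$) this is $q^{1/2}-q^{-1/2}$, exactly the $d=1$ case recalled in the introduction; for general $m$ it should follow either from a direct analysis of the local model (whose local quantum indices run over $\tfrac m2,\tfrac m2-1,\dots,-\tfrac m2$ with signs and multiplicities that make the sum telescope) or from multiplicativity of the local count under a unimodular subdivision of the dual triangle into $m$ elementary triangles. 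Multiplying over the $3d-2$ vertices, the contribution of $\Gamma$ to $R_d$ is $\prod_V\bigl(q^{m_V/2}-q^{-m_V/2}\bigr)=(q^{1/2}-q^{-1/2})^{3d-2}\prod_V[m_V]_q$; summing over all $\Gamma$ and using the invariance of the Block--G\"ottsche count (\cite{BlGo},\cite{ItMi}) to evaluate $BG_d$ on $\PP_\infty$ gives $R_d=(q^{1/2}-q^{-1/2})^{3d-2}BG_d$.

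The main obstacle is the localization of the quantum index together with the local computation: since $k$ is a priori a global analytic quantity (a signed logarithmic area, or the degree of a globally defined Gau{\ss} map), one must show carefully that it concentrates at the vertices in the patchworking limit and that the resulting per-vertex generating function is precisely $(q^{1/2}-q^{-1/2})[m_V]_q$, all of this being compatible with the modified Welschinger sign and with the deck-group and $\tfrac14$ bookkeeping so that the pieces assemble into the displayed product.
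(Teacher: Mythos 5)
Your overall route is the one the paper itself takes for Theorem \ref{BG}: use the invariance of Theorem \ref{Rinv} to push $\PP$ into a tropical limit, use the correspondence results of \cite{Mi05} to enumerate the real curves near each trivalent rational tropical curve through the limiting Menelaus configuration, localize both the sign \eqref{sigma} and the quantum index at the vertices, and obtain the factor $q^{m(v)/2}-q^{-m(v)/2}$ per vertex, hence the product over the $3d-2$ vertices. However, the step you yourself flag as the main obstacle is genuinely missing, and the mechanism you sketch for it is not what actually happens. There is no telescoping sum over local quantum indices $\tfrac m2,\tfrac m2-1,\dots,-\tfrac m2$: by Corollary 8.20 of \cite{Mi05} every real phase of a trivalent vertex $v$ is the image of a line under a multiplicative-linear map of determinant $m(v)$, so its quantum index is exactly $\pm\tfrac{m(v)}2$, and the per-vertex factor arises because (modulo the deck group of $\Fr^\Delta$, which the $\tfrac14$ cancels) there are exactly two oriented real phases, the positive one contributing $+q^{m(v)/2}$ and the negative one $-q^{-m(v)/2}$ through \eqref{sigma}. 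Likewise, the bijection with ``patchworking data'' is not a formal consequence of a correspondence theorem: some lifts of the point conditions under $\Fr$ admit no real curve and others several (Remark 8.25 of \cite{Mi05}); the paper establishes that there are nevertheless exactly $2^{m-1}$ real, i.e.\ $2^m$ oriented, curves near a fixed tropical curve, in quadruples under the deck group, by induction on the tree, attaching one trivalent vertex at a time — and that same induction shows $\lrot$ changes by $\pm1$ at each attachment, which is precisely what makes the sign \eqref{sigma} multiplicative over vertices. Both of these load-bearing points are asserted, not proved, in your proposal.

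On the localization of $k$: the paper does not stretch necks for the logarithmic Gau{\ss} map. Going through Theorem \ref{thm-lrot} would also force you to control the solitary-singularity term $E(\R C)$ and the tangency corrections along the degenerating necks, which you do not address. Instead the paper invokes Proposition \ref{prop-ksum}, proved via additivity of the degree of the $2\Arg$ map (equivalently of the signed coamoeba area underlying Theorem \ref{thm-larea}), which has the additional and necessary feature that non-real vertex phases — conjugate pairs of vertices with the same image — contribute zero; your argument is silent on such vertices. So the skeleton is right and matches the paper, but the two key lemmas (the per-vertex generating function and the $2^{m-1}$ count with multiplicative sign) still need proofs, and the proofs you hint at (telescoping indices, unimodular subdivision of the dual triangle) would not deliver them as stated.
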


This theorem has a surprising corollary. 
As the number $N^{\dd,\C}_d$ of irreducible
rational complex curves $\C C\subset\cp^2$
of degree $d$ passing through $\PP$
coincides with $\ntropm_d(1)$,
this number is completely determined by $R_d$,
the number accounting only for curves defined over $\R$.
Note that for this purpose it is crucial
to use the quantum refinement by $q^k$
since for $q=1$ we would have to divide by $0$
(the value of $(q^{\frac12}-q^{-\frac12})^{3d-2}$
at $q^{\frac 12}=1$) to recover $\ntropm_d(1)$.
   

\section{Conventions and notations}
\setcounter{thm}{0}
\subsection{Real curves of type I and their complex orientation}
\label{oldintro11}
A real curve $\R C\subset\rp^2$ is given by a single homogeneous polynomial equation
$F(z_0,z_1,z_2)=\sum\limits_{j,k,l}a_{k,l}z_0^jz_1^kz_2^l=0$,
$j+k+l=d$, $a_{k,l}\in\R$.
The locus $\C C\subset\cp^2$ of complex solutions of $F=0$
is called the {\em complexification} of $\R C$.
We assume $F$ to be irreducible
over $\C$ and such that $\C C$ does not coincide
with a coordinate axis $\{z_j=0\}$, $j=0,1,2$.
The normalization
\begin{equation}\label{nu}
\nu:\C\tilde C\to\C C
\end{equation}
defines a parameterization of $\C C$ by
a Riemann surface $\C \tilde C$. The antiholomorphic
involution of complex conjugation
$\conj$ acts on $\C C$ in an orientation-reversing way
so that its fixed point locus is $\R C$. The restriction
of $\conj$ to the smooth locus of $\C C$
lifts to an antiholomorphic involution $\tilde\conj:\C\tilde C\to\C \tilde C$
on the normalization. We denote with $\R \tilde C$ the fixed point
locus of $\tilde\conj$. Clearly, $\nu (\R \tilde C)\subset\R C$.
Irreducibility of $\C C$ is equivalent
to connectedness of $\C \tilde C$.

Following Felix Klein we say that $\R C$ is {\em of type I} if
$\C \tilde C\setminus \R \tilde C$ is disconnected. In such case
it consists of two connected components $S$ and $S'=\tilde\conj(S)$
which are naturally oriented by the complex orientation of
the Riemann surface $\C \tilde C$.
We have $\R\tilde C=\dd S=\dd S'$, so
a choice of one of these components, say $S$, induces the boundary orientation on
$\R \tilde C$. The resulting orientation is called {\em a complex orientation of $\R \tilde C$}
and is subject to Rokhlin's complex orientation formula \cite{Rokhlin}.
If we choose $S'$ instead of $S$
then the orientations of all components of $\R C$
will reverse simultaneously.
Thus any orientation of a component of $\R \tilde C$ determines a component
of $\C \tilde C\setminus\R \tilde C$.

\subsection{Toric viewpoint and reality of coordinate intersections}
The projective plane $\cp^2$ can be thought of
as the toric compactification
of the torus $\ctor$.
The curve $\C C\subset\cp^2$ is the closure of its toric
part $\C C^\circ=\C C\cap\ctor$.
The complement $\dd\cp^2=\cp^2\setminus\ctor$
is the union of three axes: the $x$-axis, the $y$-axis and the $\infty$-axis.
These axes intersect pairwise at the points
$(1:0:0), (0:1:0), (0:0:1)\in\rp^2$.
If the coefficients $a_{0,0}, a_{d,0}, a_{0,d}$ are non-zero
then $\C C$ is disjoint from the intersection points
of the axes.
In the general case it is reasonable to consider
other toric surfaces compactifying $\ctor$,
so that the closure of $\C C^\circ$ is disjoint
from pairwise intersections of toric divisors.

Let us
consider the (non-homogeneous)
polynomial $f(x,y)=F(1,x,y)$ and its Newton polygon
$$\Delta=\operatorname{Convex Hull}\{(j,k)\in\R^2\ |\ a_{j,k}\neq 0\}.$$
If $\Delta$ has non-empty interior then the
dual fan to $\Delta$ defines a toric compactification $\C\Delta\supset\ctor$.
The toric divisors of $\C \Delta$ correspond to the sides of $\Delta$.
Their pairwise
intersections correspond to the vertices of $\Delta$
and are disjoint from the compactification $\C \bar C$ of the curve $\C C^\circ$.
We denote with $\dd\C \Delta\subset\C \Delta$ the union of toric divisors.
Accordingly, we denote with $\R\Delta$ (resp. $\dd\R \Delta$, $\R C^\circ$, $\R \bar C$)
the real part of $\C \Delta$  (resp. $\dd\C \Delta$, $\C C^\circ$, $\C \bar C$).
E.g. we have $\rp^2=\R \Delta$ for the triangle
$\Delta=\operatorname{Convex Hull}\{(0,0),(1,0),(0,1)\}$
or a positive integer multiple of this triangle.

Let $\Fr:\ctor\to\ctor$ be the map defined
by $\Fr(x,y)=(x^2,y^2)$.
This map extends to a map $\Fr^\Delta:\C \Delta\to\C \Delta$.

We call a point $p\in\C \Delta$ {\em real or purely imaginary} if
$\Fr^\Delta(p)\in\R\Delta$.
We say that a curve $\R C\subset\rp^2$ has {\em real or purely
imaginary coordinate intersection}
if
every point of $\C C\cap\dd\C \Delta$ is real or purely imaginary.

\subsection{Logarithmic area and other numbers associated to a real curve of type I}
Let $\R C$ be a real curve of type I enhanced with a choice of a complex orientation.
Consider the image $\Log(\R C^\circ)\subset\R^2$, where
$\Log:\ctor\to\R^2$ the map defined by
\begin{equation}\label{Log}
\Log(x,y)=(\log|x|,\log|y|).
\end{equation}
For a point $p\in\R^2\setminus\Log(\R C^\circ)$ we define
$\ind(p)\in\Z$ as the intersection number of an oriented ray $R\subset\R^2$
emanating from $x$ in a generic direction and the oriented curve
$\Log(\R C^\circ)$
(this number can be considered as the linking number
of $p$ and $\Log(\R C^\circ)$).

\begin{defn}
The integral
$$\larea(\R C)=\int\limits_{\R^2}\ind_{\R C}(x)dx$$
is called the {\em logarithmic area} of $\R C$.
\end{defn}
This is the signed area encompassed by $\Log(\R C^\circ)$,
where the area of each region of $\R^2\setminus\Log(\R C^\circ)$
is taken with the multiplicity
equal to the linking number of $\Log(\R C^\circ)$.

Let $S\subset\C\tilde C\setminus\R \tilde C$
be the component corresponding
to the chosen complex orientation of $\R \tilde C$.
The intersection points $\nu(S)\cap\rtor$ are the so-called
{\em solitary real singularities} of $\R C^\circ$.
The {\em multiplicity} of a solitary real singularity
$p\in \nu(S)\cap\rtor$ is the local
intersection number of $S$ and $\rtor$ at $p$.
Here the orientation of $S$ is induced by
the inclusion $S\subset\C \tilde C$,
while the orientation of $\rtor$ is induced by the covering $\Log|_{\rtor}:\rtor\to\R^2$.
In other words, the quadrants $\R_{>0}^2$ and $\R_{<0}^2$ are counterclockwise-oriented
while the quadrants $\R_{>0}\times\R_{<0}$ and $\R_{<0}\times\R_{>0}$
are clockwise-oriented.
{\em The toric solitary singularities number}
$E(\R C)\in\Z$ is the sum of multiplicities over all
solitary real singularities of $\R C^\circ$,
i.e. the total intersection number of $S$ and $\rtor$
(enhanced with our choice of orientation).

The logarithmic Gau{\ss} map 
sends a smooth point of $\R C^\circ$ to the tangent direction of the corresponding point
on $\Log(\R C)\subset\R^2$. This map uniquely extends to a
map $$\gamma:\R\tilde{C}\to\rp^1,$$ cf. \cite{Ka-Gauss}, \cite{Mi00}.
{\em The logarithmic rotation number} $\lrot(\R C)\in\Z$
is the degree of $\gamma$.

\section{Quantum indices of real curves.}
\setcounter{thm}{0}
\begin{thm}\label{thm-larea}
Let $\R C\subset\rp^2$ be a real curve of type I enhanced with a complex orientation.
If $\R C$ has real or purely imaginary coordinate intersection
then $$\larea(\R C)=k{\pi^2}$$
where $k\in\frac12\Z$, $$-\Area(\Delta)\le k\le \Area(\Delta)$$
and $k\equiv\Area(\Delta)\pmod 1$.
\end{thm}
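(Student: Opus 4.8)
The plan is to realize $\larea(\R C)$ as an integral of a pullback of the standard area form and then evaluate it by a degree argument, which simultaneously produces the half-integrality and the bounds.

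\textbf{Step 1: Rewrite the logarithmic area as a period integral over $S$.} By the definition of $\ind_{\R C}$ via linking numbers, $\larea(\R C)=\int_{\R^2}\ind_{\R C}(x)\,dx$ is the integral of the pullback of $dx\wedge dy$ over a $2$-chain whose boundary is $\Log(\R C^\circ)$ (with its complex orientation). The natural such chain is $\Log\circ\nu$ applied to the component $S\subset\C\tilde C\setminus\R\tilde C$ determined by the complex orientation, since $\dd S=\R\tilde C$ and $\nu(\R\tilde C)\subset\R C^\circ$ maps onto $\Log(\R C^\circ)$. However $\Log$ does not extend over $\dd\C\Delta$, so one must account for the contributions of the punctures of $S$ near the toric divisors: this is exactly where the hypothesis that every point of $\C C\cap\dd\C\Delta$ is purely imaginary or real enters. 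Near such a puncture the curve looks, after the coordinate change, like a branch over which $\Arg$ of the coordinates is locally constant mod $\pi$, so the end of $\Log\circ\nu(S)$ runs off to infinity along a ray in a fixed primitive integer direction ${\vec n}(E)$; the missing piece of area contributed by closing up this end is a half-plane-type sector whose contribution is computable. The upshot is a formula
$$\larea(\R C)=\int_S (\Log\circ\nu)^*(dx\wedge dy) + (\text{correction terms at the punctures}),$$
and the correction terms are each half-integer multiples of $\pi^2$ precisely because of the reality condition.

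\textbf{Step 2: Convert the interior integral into the degree of a map to a square of dimension $2$.} Consider the map $S\to (\R/\pi\Z)^2$ sending a point to $(\Arg x,\Arg y)$; equivalently, on $\ctor$ one has the Clifford-torus argument map, and the form $(\Log\circ\nu)^*(dx\wedge dy)$ is, up to a constant, the pullback of the standard area form $d\theta_1\wedge d\theta_2$ on this square torus (this is the standard fact that $d\log|x|\wedge d\log|y| = -\,d\arg x\wedge d\arg y$ up to sign when pulled back along a holomorphic curve, via the Cauchy–Riemann equations). Therefore the interior integral equals $\pm\pi^2$ (the area of $(\R/\pi\Z)^2$ is $\pi^2$) times the topological degree of the extended argument map $\bar S\to (\R/\pi\Z)^2$, once one checks the boundary $\R\tilde C$ and the punctures map into the appropriate $1$-skeleton so that the degree is well defined. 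That degree is an integer, call it $m$; combined with the half-integral corrections of Step 1 we get $\larea(\R C)=k\pi^2$ with $k\in\frac12\Z$. This is the promised half-integrality, and it is the same mechanism as in Proposition \ref{2kdeg}.

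\textbf{Step 3: The bounds and the congruence.} For the inequality $|k|\le\Area(\Delta)$, I would bound the degree $m$ (and the correction terms) geometrically: the image $\Log(\C C^\circ)$ lies inside the amoeba $\am$, whose area is at most $\pi^2\Area(\Delta)$ by the Passare–Rullg\aa rd inequality cited in the introduction, and $|\larea(\R C)|$ is at most the area of $\am$ counted without multiplicity since each linking-number region sits inside $\am$ — more carefully, one shows the signed count is dominated by the $\pi^2$-normalized volume of $\Delta$. For the congruence $k\equiv\Area(\Delta)\pmod 1$, note that $2\Area(\Delta)$ is the self-intersection/number of interior-plus-boundary lattice points data and equals, mod $2$, the number of boundary lattice points contributed by $\dd\Delta$; the parity of $2k$ is governed by the total number of coordinate intersection points (counted with multiplicity and weighted by whether they are real or purely imaginary), which is $\sum_E \#(E\cap\Z^2)$ up to the vertices, and this matches $2\Area(\Delta)$ mod $2$ by Pick/Menelaus-type bookkeeping. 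The main obstacle I anticipate is Step 1: making the puncture analysis rigorous — choosing local coordinates on $\C\tilde C$ near each point of $\C C\cap\dd\C\Delta$, checking that the reality hypothesis forces the argument map to have the claimed local behavior, and pinning down the exact half-integer each puncture contributes (including the sign, which depends on the complex orientation). Everything else is either a routine Cauchy–Riemann computation or an invocation of Passare–Rullg\aa rd.
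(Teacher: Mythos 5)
Your overall skeleton (the Cauchy--Riemann identity $d\log|x|\wedge d\log|y|=d\arg x\wedge d\arg y$ on the curve, followed by a degree argument for the argument map) is the right starting point, but the proposal has a genuine gap exactly at the place you flag as the anticipated obstacle, and your proposed patch does not work as stated. You restrict to the half $S$ and want a well-defined degree of the extended map $\bar S\to(\R/\pi\Z)^2$, arguing it suffices that the boundary and punctures ``map into the appropriate $1$-skeleton.'' That is not sufficient: each boundary circle of $\bar S$ coming from a puncture over $\C E_j$ maps \emph{onto} a closed geodesic of the torus in the direction $\vec n(E_j)$, which is a nontrivial $1$-cycle, so the signed preimage count jumps across these geodesics and no single degree exists for the map to the torus. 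This is precisely where the hypothesis of purely imaginary or real coordinate intersections must be used, and the paper's mechanism is the pillowcase quotient $P=(\R/\pi\Z)^2/\sigma$ of \eqref{pillow} (Lemma \ref{lem-larg}): the reality hypothesis forces each such geodesic to pass through two of the four fixed points of $\sigma$, so in $P$ it folds onto a segment traversed back and forth, the pushed-forward boundary vanishes, and the degree is well defined; half-integrality then comes from $\Area(P)=\pi^2/2$, not from ``correction terms.'' (Equivalently, Proposition \ref{2kdeg} doubles up $S\cup\conj(S)$ with opposite orientations so the end contributions cancel.) Your alternative framing --- exact equality up to puncture corrections, each ``a half-plane-type sector'' worth a half-integer multiple of $\pi^2$ --- is unsubstantiated and structurally different: in the paper the identity \eqref{larg} is exact, and an integer degree to the full torus plus unspecified half-integer corrections would not by itself give the statement.

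The remaining steps also have gaps. For the bound you invoke Passare--Rullg\aa rd and claim the signed area is dominated by the amoeba area ``counted without multiplicity,'' but the linking number $\ind$ can have absolute value larger than $1$, so this domination needs an argument you do not supply; the paper instead bounds the degree of $\beta$ at a generic point by intersecting $\C C^\circ$ with its image under the involution $\conj_{\tilde a}$ whose fixed locus is $(2\Arg)^{-1}(\tilde a)$, and applying Kouchnirenko--Bernstein to get $|2k|\le 2\Area(\Delta)$ --- a self-contained count of fiber points rather than an area estimate. For the congruence $k\equiv\Area(\Delta)\pmod 1$ you assert that the parity of $2k$ is ``governed by'' the number of coordinate intersection points and then do Pick-type bookkeeping; the first assertion is exactly what needs proof (it is essentially Theorem \ref{thm-lrot}, which requires transversality hypotheses), whereas the paper gets the parity directly from the same fiber count: the degree $2k$ has the parity of the total number of preimages of a generic point, which equals $2\Area(\Delta)$ modulo $2$ by the Bernstein count. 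So both the bound and the congruence should be rerouted through the generic-fiber argument rather than through Passare--Rullg\aa rd and lattice-point parities.
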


Note that as $\Delta\subset\R^2$ is a lattice polynomial,
its area is a half-integer number.

\begin{defn}
We say that $k(\R C)=\frac{1}{\pi^2}\larea(\R C)$ is
{\em the quantum index} of $\R C$.
\end{defn}
If $\R C$ is an irreducible real curve of type I with real
or purely imaginary coordinate intersection,
but the complex orientation of $\R C$ is not fixed then its quantum index is
well-defined up to sign.

The quantum index $k(\R C)$
can also be expressed without computing
the logarithmic area.
\begin{prop}\label{2kdeg}
The integer number $2k(\R C)$ coincides with the
degree of the map
$$
2\Arg:\C C^\circ\setminus\R C^\circ\to(\R/\pi\Z)^2,
$$
i.e. with the number of inverse images at a generic point of the
torus $(\R/\pi\Z)^2$ counted with the sign according to the orientation.
(In particular, this number does not depend on the choice
of a point in $(\R/\pi\Z)^2$ as long as this choice
is generic.)
Here the orientation of
$\C C^\circ\setminus\R C^\circ$ is defined by the condition that it
coincides with the complex orientation of $\C C$
on the component $S\subset\C C^\circ\setminus\R C^\circ$ determined by the orientation of $\R C$
and is opposite to the complex orientation of $\C C$ on the component
$\conj(S)\subset\C C^\circ\setminus\R C^\circ$.
The map $2\Arg$ is defined by $2\Arg(x,y)=(2\arg(x),2\arg(y)).$
\end{prop}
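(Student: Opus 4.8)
The plan is to compute the logarithmic area $\larea(\R C)$ as a pushforward integral over $\C C^\circ \setminus \R C^\circ$ and recognize the answer as $2k\pi^2$, where $2k$ is the degree of the $2\Arg$ map. The starting observation is that $\C C^\circ \setminus \R C^\circ$ has two components $S$ and $\conj(S)$, and with the orientation prescribed in the statement (complex orientation on $S$, opposite on $\conj(S)$), the map $2\Arg \colon \C C^\circ \setminus \R C^\circ \to (\R/\pi\Z)^2$ has a well-defined degree, which I will call $2k$. The key geometric input is that on each of $S$ and $\conj(S)$ the map $\Log$ is an immersion onto its image (away from a measure-zero set), and that $\Log(S) = \Log(\conj(S))$ since $\conj$ commutes with $\Log$; moreover the boundary $\Log(\R C^\circ)$ separates and the linking-number weight $\ind_{\R C}(x)$ at a generic point $x \in \R^2 \setminus \Log(\R C^\circ)$ equals the signed count of preimages of $x$ under $\Log|_S$ (with the complex orientation), by the standard relation between winding number of the boundary and degree of the bounding map. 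Hence
$$
\larea(\R C) = \int_{\R^2} \ind_{\R C}(x)\, dx = \int_{S} \Log^*(dx_1 \wedge dx_2),
$$
where $x_1, x_2$ are the coordinates on $\R^2$, i.e. $x_1 = \log|x|$, $x_2 = \log|y|$ on the torus.

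Next I would pass from $S$ to all of $\C C^\circ \setminus \R C^\circ$ using the prescribed orientation: since $\conj$ reverses orientation on $\C C$ but $\Log \circ \conj = \Log$, the integral of $\Log^*(dx_1\wedge dx_2)$ over $\conj(S)$ with the complex orientation equals $-\int_S \Log^*(dx_1 \wedge dx_2)$, so with the orientation of the proposition (opposite to complex on $\conj(S)$) the two contributions agree and
$$
\int_{\C C^\circ \setminus \R C^\circ} \Log^*(dx_1 \wedge dx_2) = 2\int_S \Log^*(dx_1\wedge dx_2) = 2\,\larea(\R C).
$$
Wait — that double-counts; more carefully, I will instead directly relate $\larea(\R C)$ to half this quantity, or equivalently show $\larea(\R C) = \int_{S}\Log^*(dx_1\wedge dx_2)$ and $\int_{\C C^\circ\setminus\R C^\circ}\Log^*(dx_1\wedge dx_2) = 2\,\larea(\R C)$ with the stated orientation. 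The real point is the comparison of two pushforwards of the same oriented surface $\C C^\circ \setminus \R C^\circ$ under two different maps to surfaces, namely $\Log$ to $\R^2$ and $2\Arg$ to $(\R/\pi\Z)^2$. On the complex torus $\ctor$ with coordinates $(x,y) = (e^{t_1+i\theta_1}, e^{t_2+i\theta_2})$, the holomorphic volume form is $\frac{dx}{x}\wedge\frac{dy}{y} = (dt_1 + i\,d\theta_1)\wedge(dt_2 + i\,d\theta_2)$, whose real part is $dt_1\wedge dt_2 - d\theta_1\wedge d\theta_2 = \Log^*(dx_1\wedge dx_2) - \Arg^*(d\theta_1\wedge d\theta_2)$. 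Since $\C C^\circ\setminus\R C^\circ$ is a complex curve (real dimension $2$) in a complex surface and the form $\frac{dx}{x}\wedge\frac{dy}{y}$ is a holomorphic $2$-form, its restriction to the curve is identically zero. Therefore, integrating over $\C C^\circ \setminus \R C^\circ$ with its complex orientation,
$$
0 = \int_{\C C^\circ\setminus\R C^\circ}\re\!\left(\tfrac{dx}{x}\wedge\tfrac{dy}{y}\right) = \int_{\C C^\circ\setminus\R C^\circ}\Log^*(dx_1\wedge dx_2) - \int_{\C C^\circ\setminus\R C^\circ}\Arg^*(d\theta_1\wedge d\theta_2).
$$
The first integral is $\pm 2\,\larea(\R C)$ by the boundary argument above; the second, after rescaling $\theta\mapsto 2\theta$ so that the target is $(\R/\pi\Z)^2$, is $\frac{1}{2}\cdot(2\pi)^2$ times the degree of $2\Arg$ — but one must be careful: $2\Arg(x,y) = (2\theta_1, 2\theta_2)$ and $(2\pi)^2 \cdot (\text{deg of } 2\Arg)$... more precisely $\int \Arg^*(d\theta_1\wedge d\theta_2) = (2\pi)^2 \deg(\Arg)$ and $2\Arg$ covers the smaller torus $(\R/\pi\Z)^2$ with area $\pi^2$, so $\int (2\Arg)^*(\text{vol}) = \pi^2 \deg(2\Arg) = \pi^2\cdot 2k$. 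Matching signs and orientations gives $\larea(\R C) = k\pi^2$ and identifies $2k$ with $\deg(2\Arg)$; the sign/orientation bookkeeping is exactly engineered so the "opposite complex orientation on $\conj(S)$" convention makes both pushforwards add constructively.

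The main obstacle I anticipate is the careful justification of the identity $\int_{\R^2}\ind_{\R C}(x)\,dx = \int_S \Log^*(dx_1\wedge dx_2)$, i.e. that the linking-number weight is exactly the signed preimage count of $\Log|_S$. This requires controlling the behavior of $\Log$ along the boundary $\R\tilde C$ and near the coordinate axes: the hypothesis that $\R C$ has pure imaginary or real coordinate intersection guarantees that the ends of $S$ approach $\dd\C\Delta$ in a controlled way so that $\Log(S)$ has finite area and its "boundary at infinity" contributes nothing, and one must check that $S$, being an open surface, has a well-defined proper pushforward and that Stokes/degree arguments apply despite non-compactness. A secondary technical point is verifying that $\frac{dx}{x}\wedge\frac{dy}{y}$ restricted to the (possibly singular, hence requiring the normalization $\nu$) curve is integrable and that pulling back to $\C\tilde C$ is legitimate — but since $\nu$ is a biholomorphism away from finitely many points and the form is holomorphic, this is routine. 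Once the pushforward identities are in place, equating real parts of the vanishing holomorphic integral is immediate, and the last step — reading off that the answer at a generic point of $(\R/\pi\Z)^2$ is independent of that point because it is a degree — is automatic from the general theory of degrees of proper maps.
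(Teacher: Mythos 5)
Your area computation is essentially the same as the paper's (Lemma \ref{lem-larg}): the vanishing of $\re\bigl(\frac{dx}{x}\wedge\frac{dy}{y}\bigr)$ on the curve converts $\larea(\R C)=\int_{S}\Log^*(dx_1\wedge dx_2)$ into the integral of the pulled-back angular form. The genuine gap is in the passage from that integral to a \emph{degree}. What your chain of identities proves is only that the integral over $(\R/\pi\Z)^2$ of the local signed fiber count of $2\Arg$ equals $2k(\R C)\pi^2$, i.e.\ that the \emph{average} of this count is $2k(\R C)$. The proposition asserts more: that the count is the same at every generic point. You dismiss this as ``automatic from the general theory of degrees of proper maps'', but $2\Arg$ restricted to $\C C^\circ\setminus\R C^\circ$ is not proper: the whole boundary $\R C^\circ$ is sent to a single point, and each end of the curve (a punctured neighbourhood of a point of $\C\bar C\cap\dd\C\Delta$) wraps onto a closed geodesic of the target torus. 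Across such a geodesic the fiber count of $2\Arg|_S$ jumps by the local intersection multiplicity, so constancy is not automatic; indeed it fails for a real curve whose coordinate intersections are neither real nor purely imaginary. Note also that you invoke the hypothesis on coordinate intersections only to control $\Log(S)$, where it is not really needed (the logarithmic area is finite for every curve); its essential role is precisely on the argument side, which your proposal never uses.

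What is missing is the mechanism that cancels these jumps, and this is where the paper's pillowcase enters. With your orientation, the signed count of $(2\Arg)^{-1}(\tilde a)$ over $\C C^\circ\setminus\R C^\circ$ equals the count over $S$ alone of the two fibers over $\tilde a$ and $-\tilde a$, i.e.\ the fiber count of the induced map $\beta\colon S^\circ\to P=(\R/\pi\Z)^2/\sigma$, $\sigma(a,b)=(-a,-b)$. Because every point of $\C\bar C\cap\dd\C\Delta$ is real or purely imaginary, the geodesic attached to an end passes through one of the four $\sigma$-fixed points $(0,0),(\frac\pi2,0),(0,\frac\pi2),(\frac\pi2,\frac\pi2)$; hence it is $\sigma$-invariant and folds onto a segment in $P$, so the image of $\dd\bar S$ carries no $1$-cycle there and $\bar\beta_*[\bar S]$ is a $2$-cycle in $P$. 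This is what makes the fiber count constant, say $l$, whence $\int_{S^\circ}\beta^*d\Vol_P=l\,\Area(P)=l\pi^2/2$ and $l=2k(\R C)$. Equivalently, upstairs: when $\tilde a$ crosses the geodesic of an end of $S$, the point $-\tilde a$ crosses the \emph{same} geodesic in the opposite direction (the geodesic is $\sigma$-invariant exactly because of the hypothesis), and the two jumps cancel in the sum; without the hypothesis the end of $S$ and the conjugate end of $\conj(S)$ produce two distinct walls across which the count genuinely changes. Until you supply this cancellation (or an equivalent argument), your proof pins down only the mean value of the count, not the degree claimed in the proposition.
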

We say that $\R \bar C$ is {\em transversal to $\dd \R\Delta$}
if for any $p\in\R C\cap\dd\R \Delta$ we have $\nu^{-1}(p)\subset \R \tilde C$
and
the composition $\R \tilde C{\to}\R\bar C\subset\R\Delta$ is an immersion
near $\nu^{-1}(p)\subset \R \tilde C$,
and this immersion is transversal to $\dd \R\Delta$.
\begin{thm}\label{thm-lrot}
Let $\R C$ be a curve of type I with real
or purely imaginary coordinate intersections
such that $\R \bar C$ is transversal to $\dd\R \Delta$.
Then
$$k(\R C)=-\frac12\lrot(\R C)+E(\R C).$$
\end{thm}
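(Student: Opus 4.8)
The plan is to deduce the identity from Proposition~\ref{2kdeg}, which says that $2k(\R C)$ is the degree of $2\Arg$ on $\C C^\circ\setminus\R C^\circ$, the latter being the union of the two halves $S$ and $\conj(S)$. Concretely, for generic $\zeta\in(\R/\pi\Z)^2$, $2k$ is the signed number of $z$ with $2\Arg(z)=\zeta$. Since $2\Arg\circ\conj=-2\Arg$ and the negation $\iota$ of $(\R/\pi\Z)^2$ is orientation preserving, the orientation convention of Proposition~\ref{2kdeg} gives $2k=N_S(\zeta)+N_S(-\zeta)$, where $N_S(\zeta)$ denotes the signed count of preimages of $\zeta$ lying in $S$ with its complex orientation; keeping both $\zeta$ and $-\zeta$ is essential because each $N_S$ is an integer while $k$ may be a half-integer. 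I would choose $\zeta$ generic and very close to the corner $(0,0)$. Inside $\C C^\circ$ the locus $(2\Arg)^{-1}((0,0))$ is $\R C^\circ$ together with the solitary real singularities on $S$ and on $\conj(S)$; outside small neighbourhoods of these, of $\R\tilde C$, and of the points of $\C\tilde C$ over $\dd\C\Delta$, the curve lies in a compact ``core'' whose image misses a neighbourhood of $(0,0)$. Hence for $\zeta$ close enough to $(0,0)$ every preimage lies in one of three regions: disks about the solitary singularities, a collar of $\R\tilde C^\circ=\nu^{-1}(\R C^\circ)$, or small punctured disks about the ideal points over $\dd\C\Delta$.

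At a solitary real singularity $p\in S$ one picks an analytic identification $\ctor\cong\R^2\times(\R/\pi\Z)^2$ near $\nu(p)$ under which $2\Arg$ becomes the projection to the torus factor; the signed number of preimages of $\zeta\approx(0,0)$ near $p$ is then the local intersection number of $S$ with $\R^2\times\{\zeta\}$, which by deformation invariance equals the multiplicity of $p$. Summing over the solitary singularities on $S$ yields $E(\R C)$, and by the conjugation symmetry those on $\conj(S)$ contribute the same; so these regions contribute $2E(\R C)$ to $2k$.

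Near a point of $\C\tilde C$ over $\dd\C\Delta$ the map $2\Arg$ is undefined at the point and, in a punctured disk, winds around a fixed closed geodesic of $(\R/\pi\Z)^2$ whose slope is read off from the corresponding side $E$ of $\Delta$; its image is a thin neighbourhood of that geodesic, so a $\zeta$ chosen off these finitely many geodesics has no preimage there. The hypothesis that $\R\bar C$ be transversal to $\dd\R\Delta$ enters here and in the next step: it forces the ends of the tentacles $\Log(\R C^\circ)$ to run to infinity in the definite directions $\pm\vec n(E)$, so that $\gamma$ extends continuously to $\R\tilde C$ with value $[\vec n(E)]$ at those ideal points, and one may take $\zeta$ generic with respect to these values as well. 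The core of the argument is the collar of $\R\tilde C^\circ$: in a local coordinate $t=\sigma+i\tau$ with $\R\tilde C=\{\tau=0\}$ and $S=\{\tau>0\}$, the functions $\log|x|$ and $\log|y|$ along the curve and the Cauchy--Riemann equations give
$$
2\Arg(\nu(\sigma+i\tau))=2\tau\,\bigl((\log|x|)'(\sigma),\,(\log|y|)'(\sigma)\bigr)+O(\tau^2),
$$
so to leading order $2\Arg$ points along the velocity vector $v(\sigma)$ of the parametrized tentacle $\Log(\R C^\circ)$, whose direction in $S^1$ lifts $\gamma(\sigma)\in\rp^1$ and never vanishes because $\Log|_{\R C^\circ}$ is an immersion. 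Thus for $\zeta$ small the preimages of $\zeta$ in the $S$-collar correspond to solutions of $v(\sigma)/|v(\sigma)|=\zeta/|\zeta|$, counted with the sign of the crossing; summing over the arcs of $\R\tilde C^\circ$ and adding the analogous $(-\zeta)$-count turns this into the signed number of solutions of $\gamma(\sigma)=[\zeta]$ over all of $\R\tilde C$, i.e.\ $\pm\lrot(\R C)$. Checking that the sign forced by the complex (boundary) orientation of $\R\tilde C$ and the orientation of $(\R/\pi\Z)^2$ is negative, the collar contributes $-\lrot(\R C)$.

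Adding the three contributions gives $2k(\R C)=2E(\R C)-\lrot(\R C)$, which is the assertion. The main obstacle is the last step: turning the heuristic leading-order expansion into an honest signed count and, above all, pinning down the global sign. This requires a uniform treatment of the collar of each component circle of $\R\tilde C$ as its arcs run into the ends over $\dd\C\Delta$, together with a careful matching between the $S^1$-valued velocity direction along $\Log(\R C^\circ)$ and the $\rp^1$-valued logarithmic Gauss map $\gamma$.
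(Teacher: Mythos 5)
Your proposal is correct and follows essentially the same route as the paper: counting preimages of both $\zeta$ and $-\zeta$ on $S$ is exactly the paper's passage to the pillowcase $P=(\R/\pi\Z)^2/\sigma$, and your three contributions (solitary singularities giving $2E(\R C)$, the ends over $\dd\C\Delta$ giving nothing for generic $\zeta$, and a collar of $\R\tilde C$ giving $-\lrot(\R C)$ via the logarithmic Gau{\ss} map) match the paper's computation of the degree of the extended map $\bar\beta:\bar S\to P$ at a generic point near $0\in P$. The one step you only assert — the global minus sign for the collar — is settled in the paper by the observation that $v_1,v_2,iv_1,iv_2$ form a negatively oriented basis of $\C^2$ whenever $v_1,v_2$ are $\C$-independent, so a positive rotation in the argument directions contributes negatively to the degree.
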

If $\R \bar C$ is not transversal
to $\dd\R \Delta$ then an adjustment of the
right-hand side according to the order of tangency and the orientation of $\R C$
should be added to the formula of Theorem \ref{thm-lrot}.
\begin{exam}[Simple Harnack curves]
If $\R C^\circ\subset\rtor$ is a simple Harnack curve (see \cite{Mi00})
then $k(\R C)=\pm\Area(\Delta)$.
Vice versa, if $k(\R C)=\pm\Area(\Delta)$ then $\R C^\circ$ is a simple
Harnack curve, see \cite{MiRu}.
This characterizes real curves of the highest and lowest quantum index.
\end{exam}
\begin{exam}[Quantum indices of real lines]
Any real line is a curve of type I and has real coordinate intersections.
The quantum index of a real line in $\rp^2$ disjoint from the points
$(1:0:0),(0:1:0),(0:0:1)$ is $\pm\frac12$ (depending on its orientation), see Figure \ref{p}.
The quantum index of a line passing through exactly one of these points is 0.
\begin{figure}[h]
\includegraphics[height=20mm]{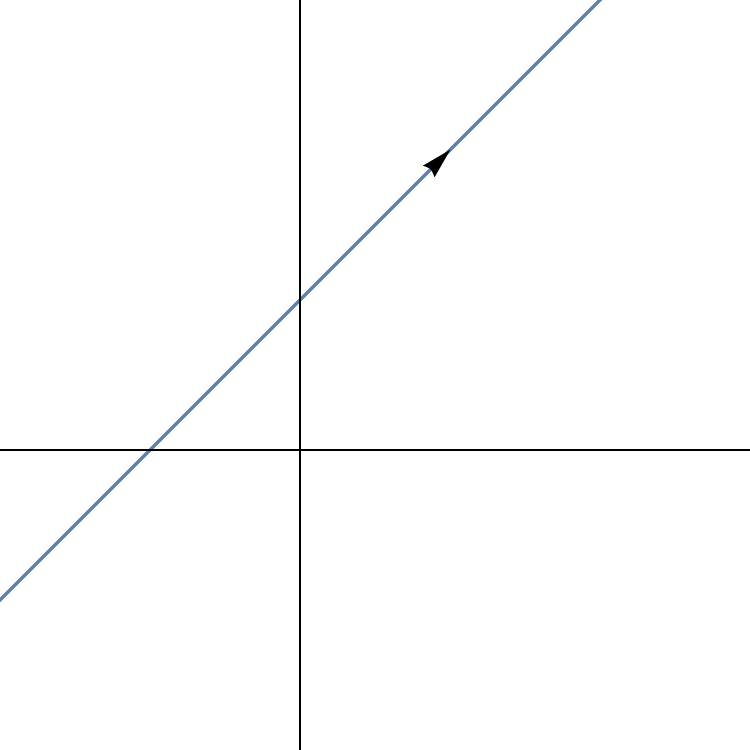}\hspace{10mm}
\includegraphics[height=20mm]{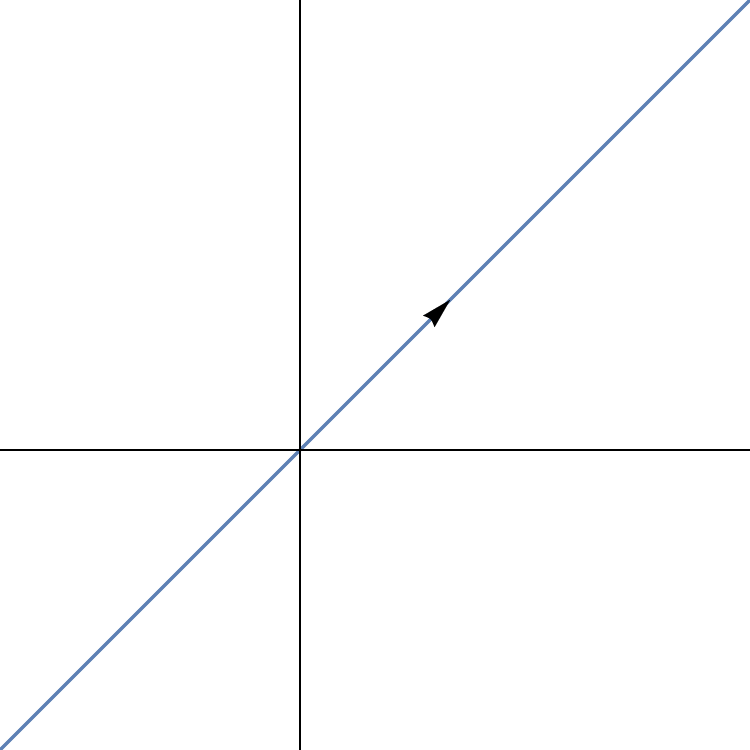}\hspace{10mm}
\includegraphics[height=20mm]{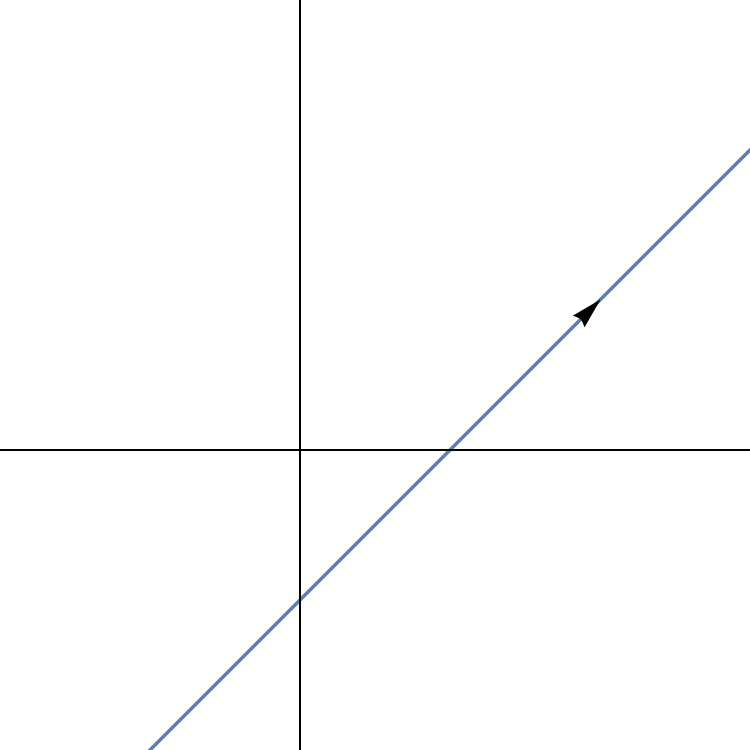}\vspace{2mm}\\
\includegraphics[height=28mm]{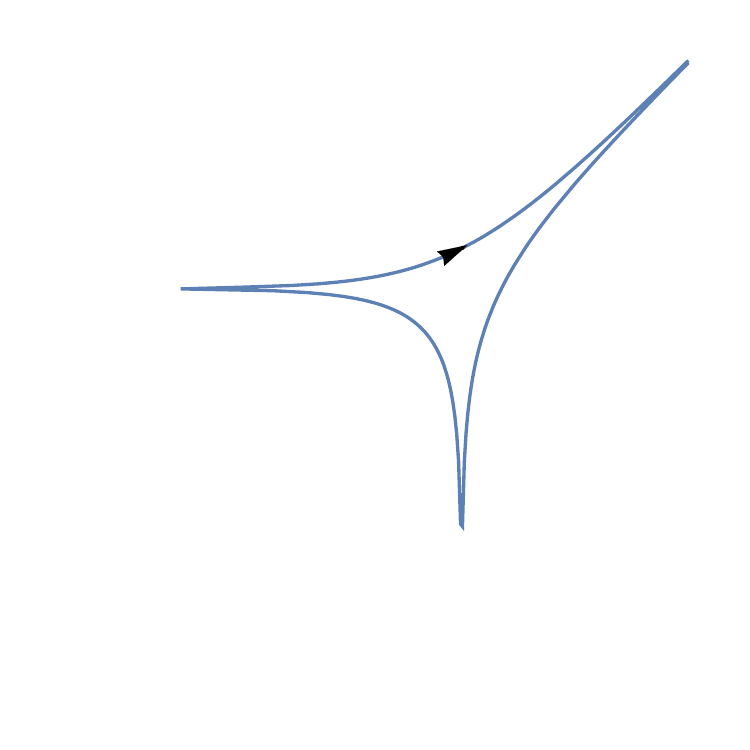}\hspace{5mm}
\includegraphics[height=28mm]{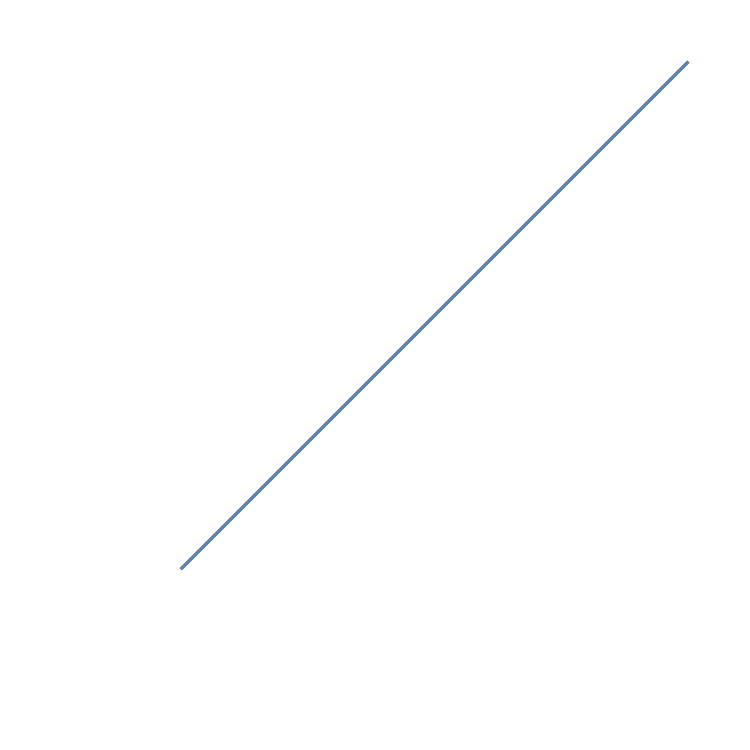}\hspace{-1mm}
\includegraphics[height=28mm]{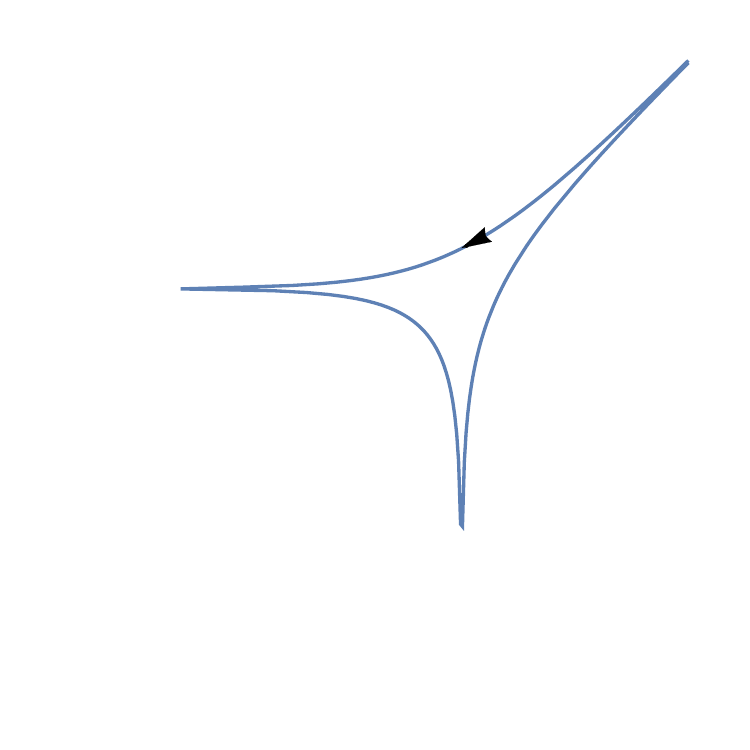}
\vspace{-5mm}\\
\hspace{1mm}\text{$k=-1$}\hspace{22mm}\text{\ \ \ $k=0$}\hspace{22mm}\text{$k=+1$}
\caption{
Oriented lines, their logarithmic images and quantum indices.
\label{p}}
\end{figure}
\end{exam}
\begin{exam}[Quantum indices of real conics]
A smooth nonempty real conic is a curve of type I.
Figure \ref{rconics} depicts real conics in $\rp^2$ that intersect the coordinates axes
in 6 real points. 
\begin{figure}[h]
\includegraphics[height=25mm]{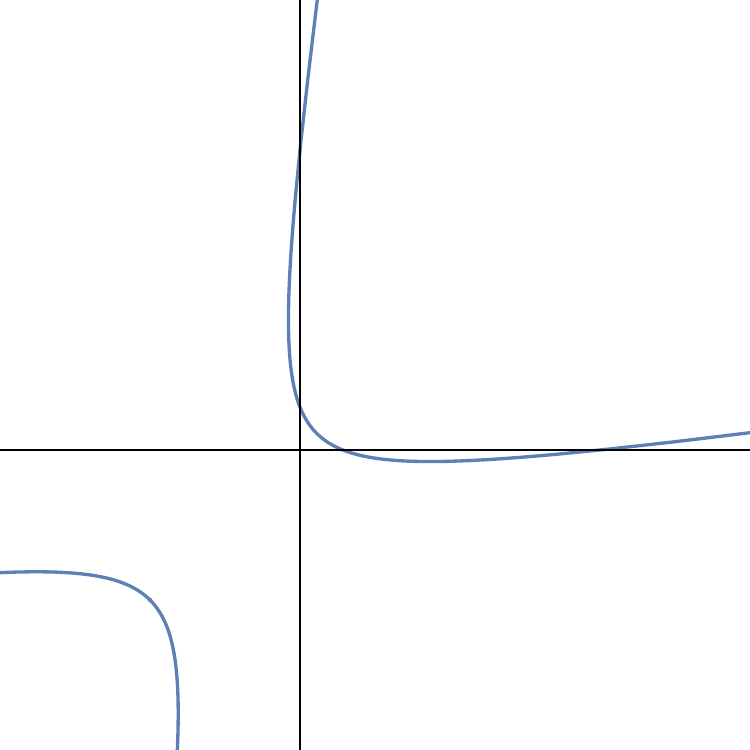}\hspace{5mm}
\includegraphics[height=25mm]{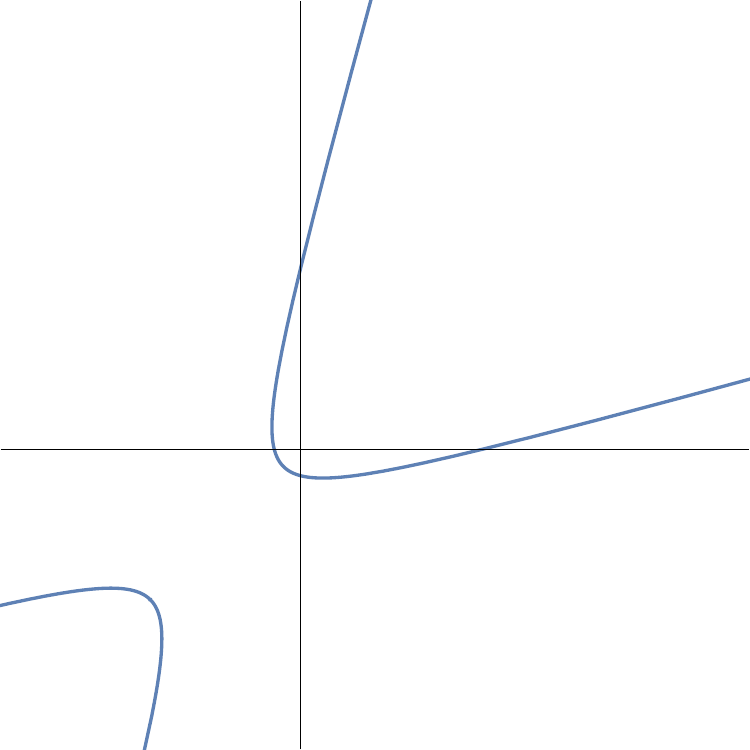}\hspace{5mm}
\includegraphics[height=25mm]{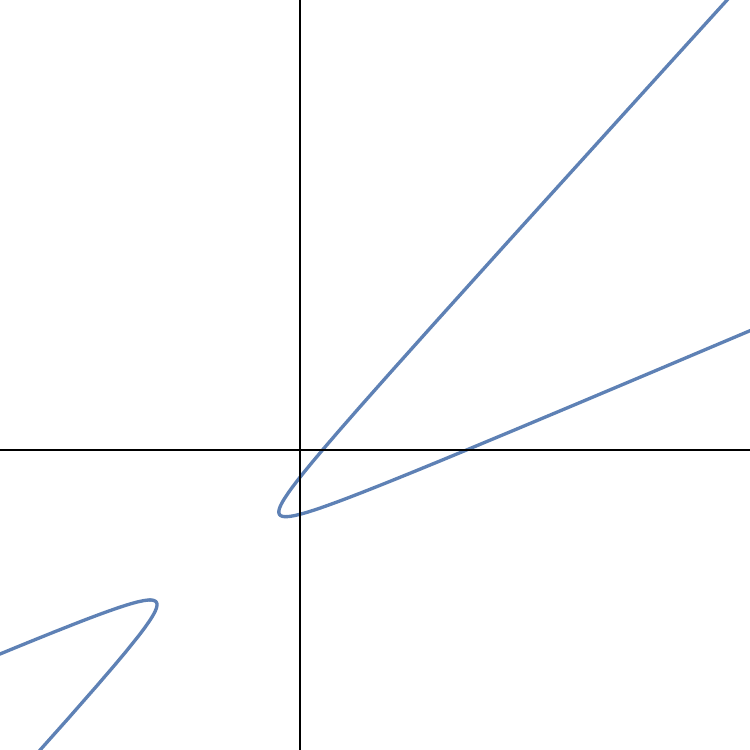}\hspace{5mm}
\includegraphics[height=25mm]{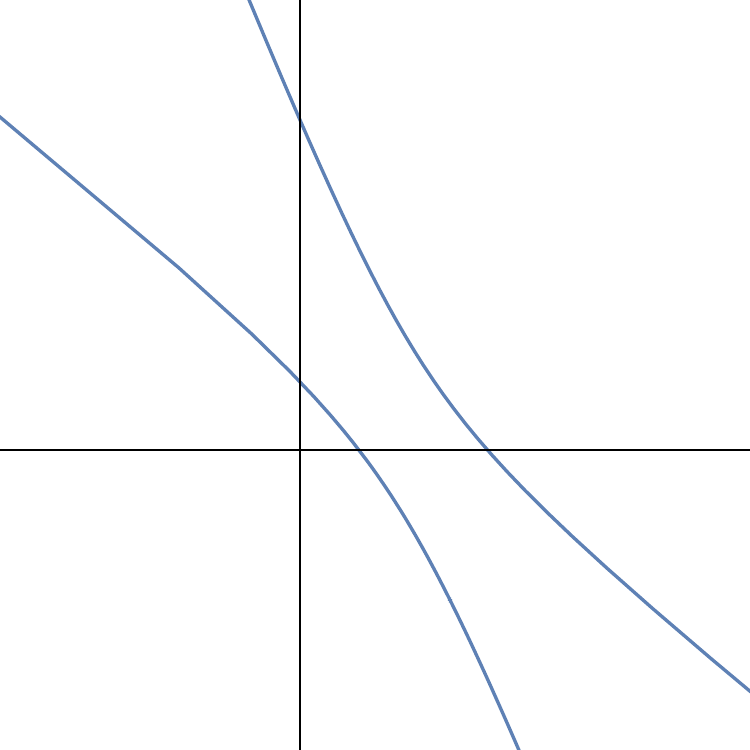}
\vspace{5mm}\\
\includegraphics[height=25mm]{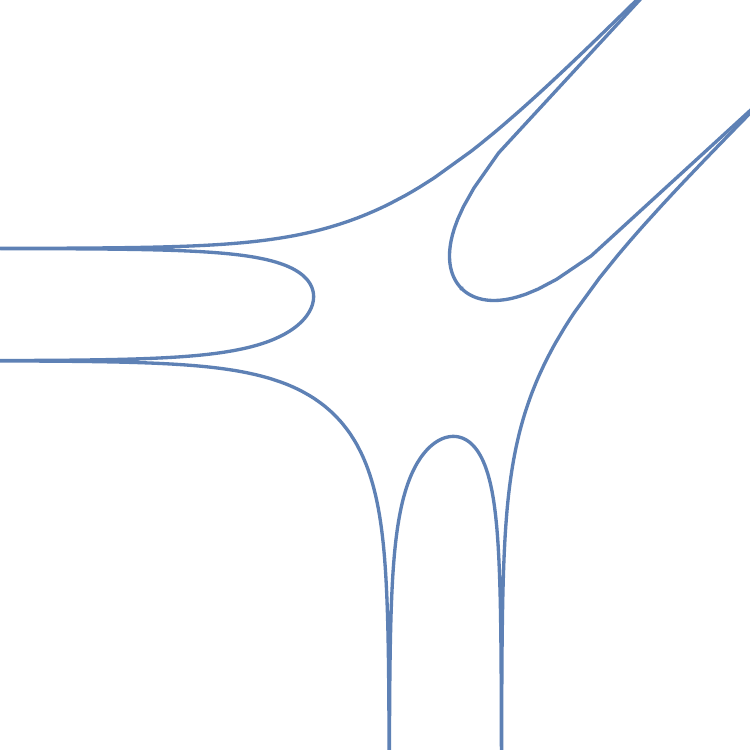}\hspace{5mm}
\includegraphics[height=25mm]{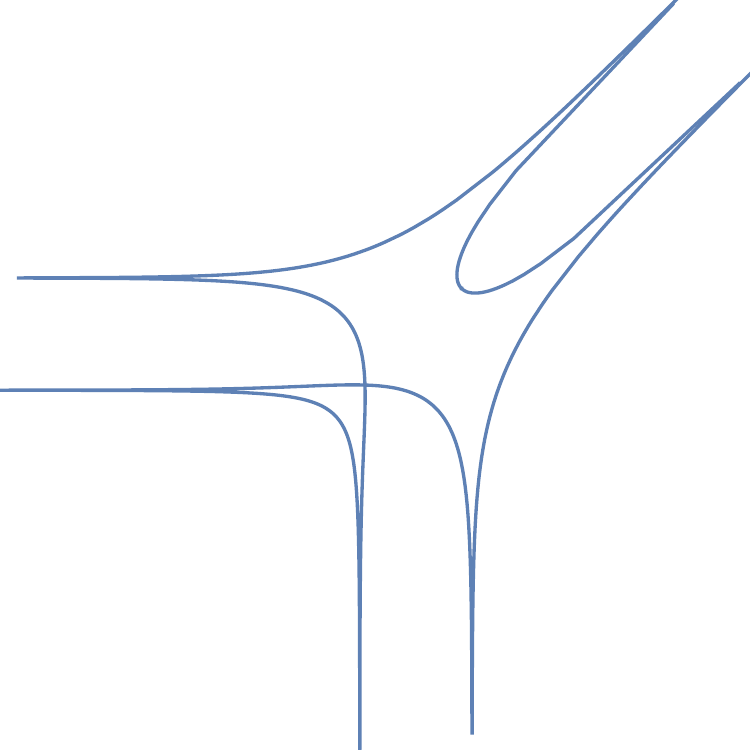}\hspace{5mm}
\includegraphics[height=25mm]{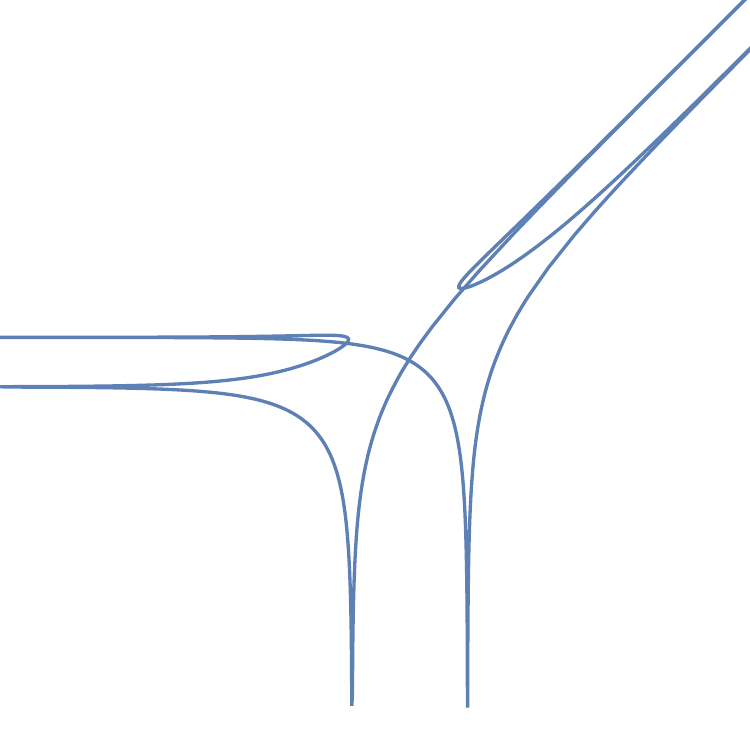}\hspace{5mm}
\includegraphics[height=25mm]{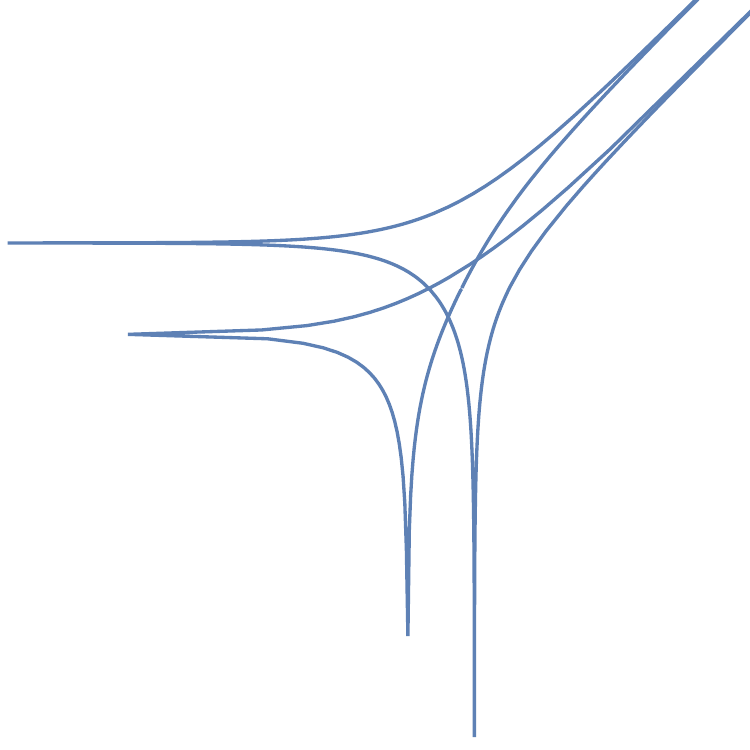}
\vspace{2mm}\\
\text{$k=\pm 2$}\hspace{20mm}\text{$k=\pm 1$}\hspace{20mm}\text{$k=0$}\hspace{20mm}\text{$k=\mp 1$}
\caption{
Projective hyperbolas, their logarithmic images and quantum indices.
\label{rconics}}
\end{figure}

Note that a circle in $\R^2$ intersects the infinite axis of $\rp^2$ at
the points $(0:1:\pm i)$. Thus a circle intersecting the coordinate axes of $\R^2$
in 4 real points has real or purely imaginary coordinate intersection, see Figure \ref{rcircles}.
A circle passing through the origin in $\R^2$
has quantum index $\pm\frac12$. Otherwise, the quantum
index of a circle is $0$ or $\pm 1$.
\begin{figure}[h]
\includegraphics[height=25mm]{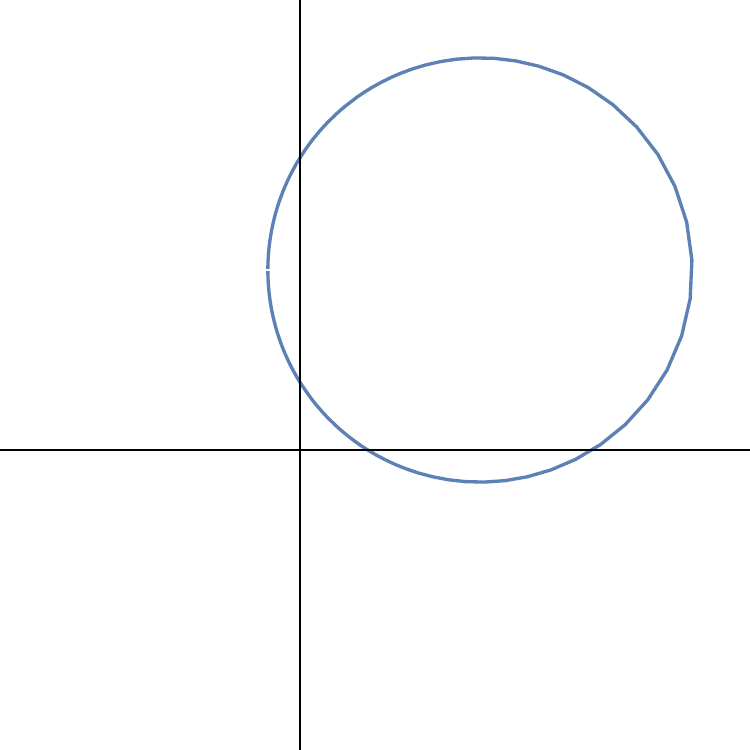}\hspace{10mm}
\includegraphics[height=25mm]{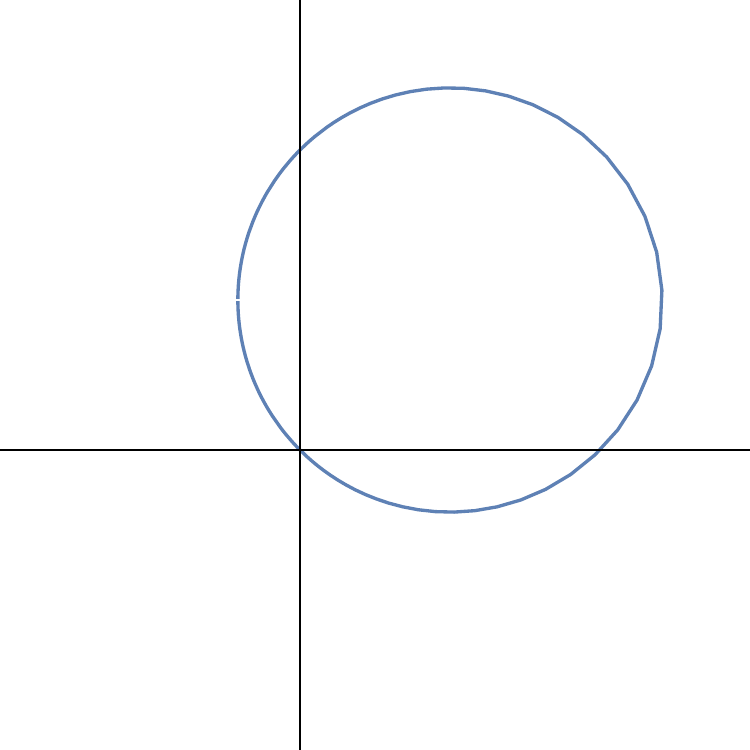}\hspace{10mm}
\includegraphics[height=25mm]{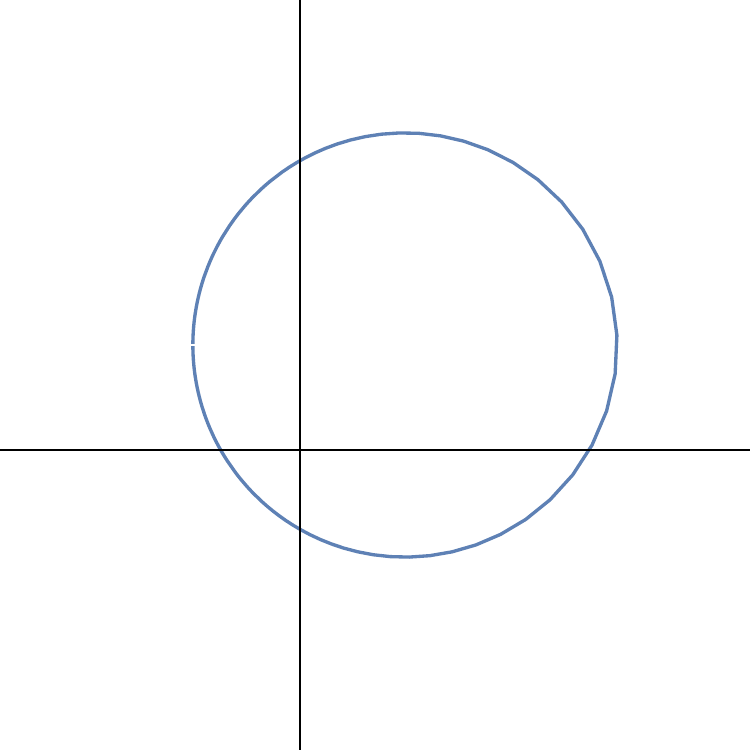}
\vspace{-5mm}\\
\includegraphics[height=28mm]{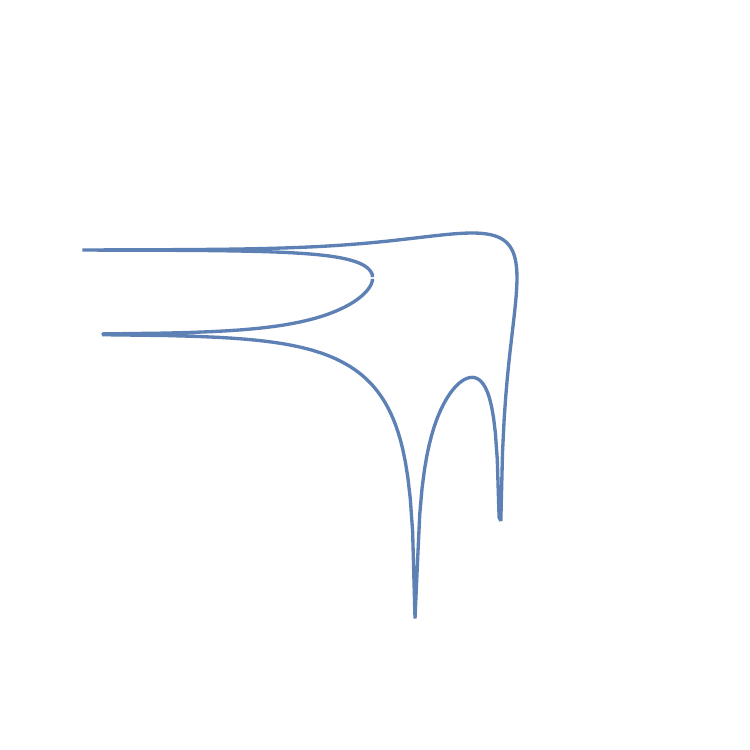}\hspace{5mm}
\includegraphics[height=28mm]{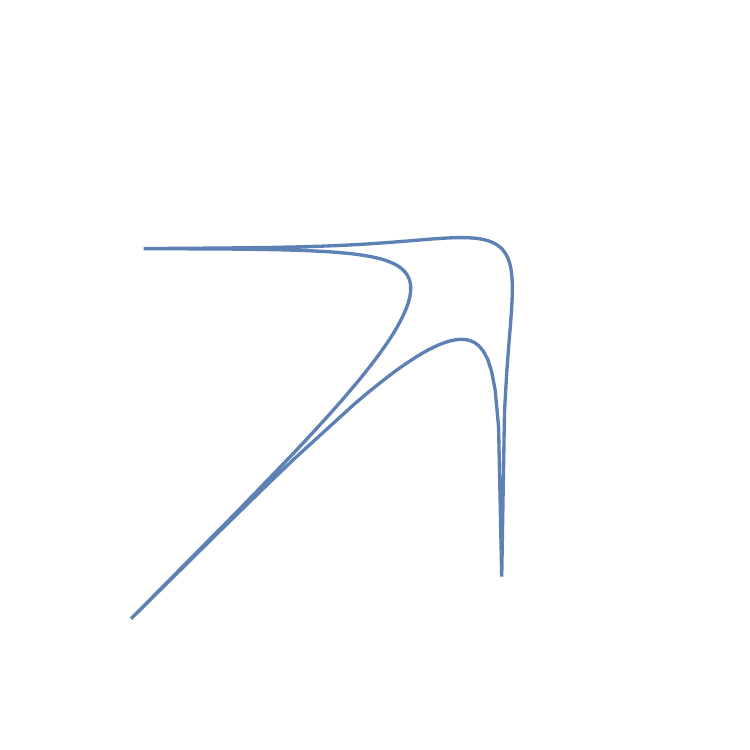}\hspace{5mm}
\includegraphics[height=28mm]{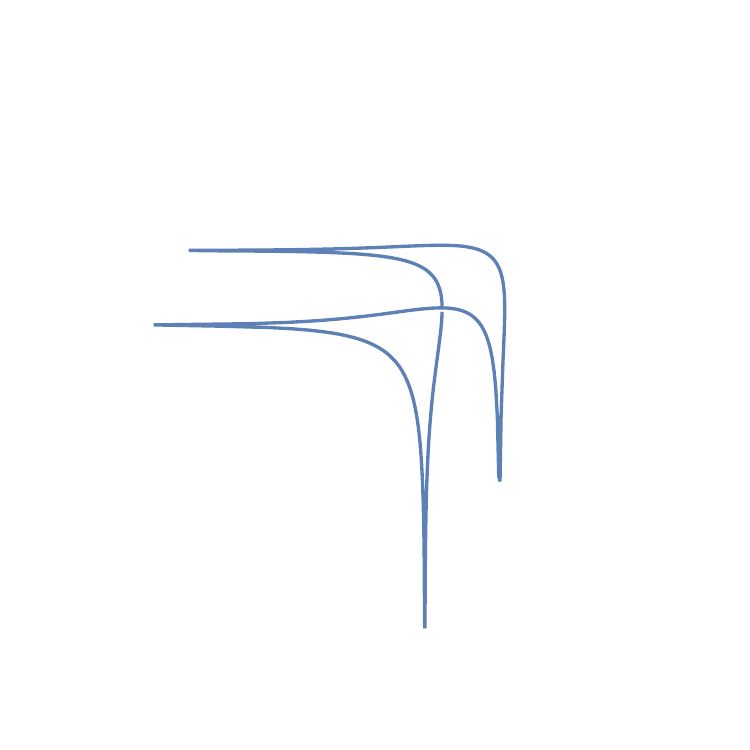}
\vspace{-2mm}\\
\text{$k=\pm 1$}\hspace{20mm}\text{$k=\pm \frac12$}\hspace{20mm}\text{$k=0$}
\caption{
Circles, their logarithmic images and quantum indices.
\label{rcircles}}
\end{figure}
\end{exam}

\section{Toric type I curves: quantum indices and diagrams}
\subsection{Definition of toric type I curves
and their index diagrams}
Denote with $\C\tilde C^\circ\subset\C \tilde C$ the normalization of an algebraic curve $\C C^\circ\subset\ctor$
and with $\R\tilde C^\circ$ its real part. The composition of the normalization and
the inclusion map induces a map $\C\tilde C^\circ\setminus\R\tilde C^\circ\to\ctor$.
\begin{defn}\label{def-ttI}
We say that an irreducible real algebraic curve
$\R C^\circ\subset\rtor$ 
has {\em toric type I} if $\R C$ is of type I
(see Section 2.1) and
the induced homomorphism
\begin{equation}\label{homS}
H_1(\C\tilde C^\circ\setminus\R\tilde C^\circ)\to H_1(\ctor)=\Z^2
\end{equation}
is trivial.
\end{defn}


\newcommand{\nuu}{\vec n}
Each side $E\subset\Delta$ is dual to a unique primitive integer vector $\nuu(E)\subset\Z^2$
(which sits in the space dual to the vector space containing the Newton polygon $\Delta$) oriented
away from $\Delta$. We refer to $\nuu(E)$ as {\em the normal vector to $E\subset\dd\Delta$}.

\begin{prop}\label{prop-realintersections}
If $\R C^\circ\subset\rtor$ is of toric type I then
$$\C \bar C\cap\dd\C \Delta\subset\R\bar C\subset\R\Delta.$$
In other words $\R C$ has real coordinate intersection.
Thus it has a well-defined quantum index for any of its
two complex orientation.
\end{prop}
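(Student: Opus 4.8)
The plan is to translate the homological triviality hypothesis of toric type I into a statement about the boundary behaviour of the components $S, S' \subset \C\tilde C^\circ \setminus \R\tilde C^\circ$, and then argue that a point of $\C\bar C \cap \dd\C\Delta$ which is genuinely non-real would force one of these components to wrap nontrivially around a toric divisor, contradicting triviality of \eqref{homS}. First I would set up coordinates: fix a side $E \subset \dd\Delta$ with normal vector $\nuu(E)$, and choose a splitting of the lattice so that $E$ is the $x$-axis (monomial $x$) and the curve meets the corresponding toric divisor $D_E \subset \C\Delta$ in the chart with coordinates $(x,y)$ where $D_E = \{x = 0\}$; the curve near a point $p \in \C\bar C \cap D_E$ is parameterized by the normalization $\nu$ via a local holomorphic chart $t \mapsto (x(t), y(t))$ with $x(t) = t^m \cdot(\text{unit})$, $m = m_p \ge 1$, and $y(t)$ a unit times a power series.

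The key step is the following "linking/monodromy" computation. Take a small loop $\delta$ in $\C\tilde C^\circ$ around the preimage $\nu^{-1}(p)$ (i.e. $|t| = \varepsilon$). Its image under $\C\tilde C^\circ \to \ctor$ is, up to homology in $\ctor$, the class $m_p \cdot e_{\nuu(E)}$, where $e_{\nuu(E)} \in H_1(\ctor) = \Z^2$ is the "meridian" class dual to the divisor $D_E$ — concretely, it is the class of the curve $x \mapsto (x, \text{const})$ traversed $m_p$ times, i.e. the class $\nuu(E)$ itself after identifying $H_1(\ctor)$ with $\Z^2$ appropriately (this is the standard fact that the local intersection multiplicity with a toric divisor equals the winding of the $\Log$-image, cf. the discussion of $\Log$ and linking numbers in the excerpt). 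Now suppose $p$ is not real. Then $\conj(p) \neq p$, so $\tilde\conj$ carries $\nu^{-1}(p)$ to a distinct point $\nu^{-1}(\conj(p)) \in \C\tilde C$; consequently $\nu^{-1}(p)$ does not lie on the fixed locus $\R\tilde C$. Hence the small loop $\delta$ lies entirely in $\C\tilde C^\circ \setminus \R\tilde C^\circ$ (for $\varepsilon$ small it avoids $\R\tilde C$), so it represents a class in $H_1(\C\tilde C^\circ \setminus \R\tilde C^\circ)$. By toric type I the image of this class in $H_1(\ctor)$ is $0$; but we just computed it to be $m_p \nuu(E) \neq 0$ since $m_p \ge 1$ and $\nuu(E)$ is primitive. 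This is the desired contradiction, so every point of $\C\bar C \cap D_E$ is real. Ranging over all sides $E$ gives $\C\bar C \cap \dd\C\Delta \subset \R\bar C$.

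The last two assertions are then immediate: "$\R C$ has real coordinate intersection" is just the statement that every point of $\C C \cap \dd\C\Delta$ lies in $\R\Delta$, which is what we proved (a fortiori such points are purely imaginary or real, since $\Fr^\Delta$ fixes $\R\Delta$ pointwise up to the squaring of coordinates and $\R\Delta \subset \R\Delta$); and then Theorem \ref{thm-larea} applies to give a well-defined quantum index. I expect the main obstacle to be making the homology bookkeeping of the previous paragraph precise: one must be careful that the loop $\delta$ is taken in $\C\tilde C^\circ$ (not in $\C\tilde C$, where such meridian loops are typically nontrivial or even bound), that $H_1(\C\tilde C^\circ \setminus \R\tilde C^\circ)$ really receives the class of $\delta$, and that under the identification $H_1(\ctor) \cong \Z^2$ the image is exactly $m_p \nuu(E)$ rather than some other lattice vector — this requires choosing the toric chart compatibly with the dual fan and checking the sign/lattice conventions, but it is the standard relation between intersection with a toric boundary divisor and the corresponding primitive normal. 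A secondary point to handle is the case where $\nu^{-1}(p)$ is a single point but $p$ is a singular point of $\C\bar C$ with several branches: one applies the argument branch by branch, since each local branch gives its own small loop and its own multiplicity, and triviality of \eqref{homS} kills each resulting class separately.
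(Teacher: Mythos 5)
Your argument is correct and is essentially the paper's own proof: the paper likewise notes that a small loop in $\C C^\circ$ around a point of $\C\bar C\cap\dd\C\Delta$ represents a positive multiple of $\nuu(E)$ in $H_1(\ctor)$, hence is nonzero, so by triviality of \eqref{homS} the loop must meet $\R C^\circ$, forcing the intersection point to be real. Your version just spells out the local-multiplicity bookkeeping and phrases the same step contrapositively.
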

\begin{proof}
The homology class in $H_1(\ctor)=\Z^2$ of a small loop in $\C C^\circ$
around a point of $\C \bar C\cap\dd\C \Delta$
is a positive multiple of $\nuu(E)$ 
for a side $E\subset\Delta$.
Therefore this class is non-zero.
Thus such loop must intersect $\R C^\circ$ if $\R C^\circ$ is of toric type I.
\end{proof}


\begin{defn}\label{idiagram}
A continuous map $a:\Sigma\to\R^2$ from
a graph $\Sigma$
is called {\em the index diagram} of the curve
$\R C^\circ$ of toric type I enhanced with a choice
of the complex orientation corresponding to
$S\subset\C \tilde C\setminus \R \tilde C$
if the following conditions hold.
\begin{itemize}
\item The vertices of the graph $\Sigma$ are 
parameterized by the connected components 
$K^\circ\subset\R\tilde C^\circ$.
We denote the corresponding vertex with  $v(K^\circ)\in\Sigma$.
\item The image $a(v(K^\circ))=(a,b)\in\Z^2$
is a lattice point in $\R^2$ such that $K^\circ$
is contained
in the $((-1)^{a},(-1)^{b})$-quadrant of $\rtor$.
\item 
Vertices $v(K^\circ_1),v(K^\circ_2)\in\Sigma$
are connected 
with an oriented edge $e$ (which we identify with
the straight oriented interval $[0,1]$)
if and only if
$K^\circ_1$ and $K^\circ_2$ are adjacent 
at a point $p_e\in\R\tilde C$
in the order defined by the complex orientation of
$\R \tilde C$. (Clearly both
$K^\circ_1$ and $K^\circ_2$ are non-compact
in such case.)
\item
The restriction $a|_e:e\approx [0,1]\to\R^2$
is an affine map with
\begin{equation}\label{id3}
a(v(K^\circ_2))-
a(v(K^\circ_1))=m_e\nuu(E).
\end{equation}
Here 
$m_e$ is the local intersection
number of $\C\tilde C$ and $\dd\C\Delta$ at $p_e$. 
\item 
There exists a continuous map 
\begin{equation}\label{liftS}
\tilde l: \tilde S= (S\setminus\dd\C\Delta)
\cup\R\tilde C^\circ\to\C^2
\end{equation}
holomorphic on $\tilde S\setminus\R\tilde C^\circ$
such that $e^{\pi \tilde l}$ coincides with the tautological
map $\tilde S\to\ctor$
while for every connected component
$K^\circ\subset\R\tilde C^\circ$
we have $$\Im\tilde l(K^\circ)=a(v(K^\circ)).$$
Here both the exponent $e^{\pi \tilde l}$
and the imaginary part $\Im\tilde l(K^\circ)$
are understood coordinatewise.
\end{itemize}
Topologically the graph $\Sigma$ is the disjoint union
of $n$ circles and $m$ points,
where $n$ is the number of 
components of $\R\tilde C$
intersecting $\dd\R\Delta$,
and $m$ is the number of compact components of
$\R\tilde C^\circ$.

Denote with $\bar\Sigma\subset\R^2$ the
convex hull of $a(\Sigma)$.
The map
\begin{equation}\label{indR}
\alpha:K^\circ\mapsto\Im\tilde l (K^0)
=a(v(K^\circ))\in
\bar\Sigma\cap\Z^2
\end{equation}
defined on the components of $\R\tilde C^\circ$
is called the {\em real index map}. 
Since the map $\tilde l$ is holomorphic
its imaginary part $\Im\tilde l$
is harmonic, and
thus $\Im\tilde l(\tilde S)\subset\bar\Sigma$.
\end{defn}

\ignore{
Let $K$ be an oriented component of a real algebraic curve $\R \tilde C\subset\R\Delta$
which has a non-empty intersection with $\dd\R \Delta$.
Let $q_1,\dots,q_l\in K\cap\dd\R\Delta$ be the points of intersections with $\dd\R\Delta$
numbered according to the cyclic order of $K$.
(Recall that $K$ is a connected component of the normalized curve $\R \tilde C$,
so the same point of $\dd\R\Delta$ may lift to more than one of these intersection points.)
Let $E_j$ be the side of $\Delta$ such that $q_j$ is contained in the toric divisor corresponding
to $E_j$ (we may have $E_j=E_{j'}$ for $j\neq j'$) and $\iota_j$ be the local intersection
number of $\dd\C\Delta$ and $\C\tilde C$ at $q_j$.

Let $\Sigma(K)\subset\R^2$ be the oriented broken line with the vertices
to the arcs of $K\setminus\bigcup\limits_{j=1}^l\{q_j\}$ and edges given
by the vectors $\iota_j\nuu(E_j)\subset\Z^2$ corresponding to the points $q_j$
and connecting the vertices corresponding to the adjacent arcs.
The starting vertex $v=(a,b)$ corresponds to the arc $(q_l,q_1)\subset K$.
We choose $(a,b)$ so that their parity is compatible with the quadrant $Q_{(q_l,q_1)}\subset \rtor$
containing the arc $(q_l,q_1)$, i.e. so that
$Q_{(q_l,q_1)}$ is the $((-1)^a,(-1)^b)$-quadrant in $\rtor$.
By induction this implies that 
the parity of all vertices of $\Sigma(K)$ are compatible with the quadrants
containing the corresponding arcs of $K\setminus\bigcup\limits_{j=1}^l\{q_j\}$.
}

\begin{prop}\label{IDexists}
Any curve $\R C^\circ\subset\rtor$ of toric type I
admits an index diagram $\Sigma(\R C)\subset\R^2$
which is unique up to a translation by $2\Z^2$ in $\R^2$.
%
%
\end{prop}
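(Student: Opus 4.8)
The plan is to construct $\Sigma(\R C)$ directly from the "logarithm with phase" map and then prove that the combinatorial axioms of Definition \ref{idiagram} are automatically satisfied, so that existence and uniqueness are both governed by the ambiguity in choosing this map. First I would fix a complex orientation, i.e. a component $S\subset\C\tilde C\setminus\R\tilde C$, and consider the tautological map $\tilde S=(S\setminus\dd\C\Delta)\cup\R\tilde C^\circ\to\ctor$. Because $\R C^\circ$ is of toric type I, the homomorphism $H_1(\C\tilde C^\circ\setminus\R\tilde C^\circ)\to H_1(\ctor)=\Z^2$ is trivial, so the composite $S\to\ctor\xrightarrow{\Log,\Arg}\R^2\times(\R/\pi\Z)^2$ lifts across the covering $\C^2\to\R^2\times(\R/\pi\Z)^2$ given by $w\mapsto(\re w,\tfrac1\pi\Im w)$: a lift $\tilde l\colon \tilde S\to\C^2$ with $e^{\pi\tilde l}$ equal to the tautological map exists on $S$ because $S$ is simply connected relative to the monodromy, and extends continuously over $\R\tilde C^\circ$ since $\log|x|,\log|y|$ are real there, while $\arg$ is locally constant mod $\pi$ on each arc component $K^\circ$ (the coordinates are real, hence $\arg\in\pi\Z$). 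This gives the required map \eqref{liftS}, holomorphic on $\tilde S\setminus\R\tilde C^\circ$ because $\log$ of a holomorphic nonvanishing function is holomorphic.

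Next I would read off the vertices and edges. On each arc component $K^\circ\subset\R\tilde C^\circ$ the imaginary part $\Im\tilde l$ is a continuous $\pi\Z^2$-valued (hence constant) function, and dividing by $\pi$ — wait, the normalization in \eqref{liftS} is $e^{\pi\tilde l}$, so $\Im\tilde l$ is integer-valued and constant on $K^\circ$; call this vector $v(K^\circ)\in\Z^2$. Its parity is determined by the signs of the real coordinates $x,y$ along $K^\circ$, i.e. by the quadrant of $\rtor$ containing $K$, which is precisely the second bullet. For the edge condition: at an intersection point $p\in\R\tilde C$ lying over a point $p_e\in\C E$ of a toric divisor, a small loop of $\C\tilde C$ around $p_e$ has homology class $m_e\nuu(E)$ in $H_1(\ctor)$ (with $m_e$ the local intersection multiplicity), so tracking $\tilde l$ along the half-loop inside $S$ from one adjacent arc to the next changes $\Im\tilde l$ by exactly $m_e\nuu(E)$ — this is the monodromy of the complex logarithm around a zero/pole of order $m_e$ along the coordinate divisor. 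Hence $\vec{v_1v_2}=m_e\nuu(E)$, which is \eqref{id3}, and the cyclic adjacency of arcs along $\R\tilde C$ in the complex orientation gives the prescribed ordering of edges. The map $\alpha$ of \eqref{indR} and the harmonicity remark are then immediate from $\tilde l$ being holomorphic.

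For uniqueness up to $2\Z^2$: two lifts $\tilde l,\tilde l'$ of the same tautological map differ by an element of the deck group of $\C^2\to\R^2\times(\R/\pi\Z)^2$ that is continuous on the connected set $\tilde S$, hence by a constant vector in $2\Z^2$ (the deck group is $2\pi i\Z^2$ before dividing, i.e. $2\Z^2$ in the $\Im\tilde l$-coordinates after the normalization by $e^{\pi\tilde l}$); this translates all vertices of $\Sigma$ simultaneously by the same element of $2\Z^2$, and conversely any such translation is realized. One should also check that the combinatorial data (which arcs are adjacent, with which $m_e$ and $\nuu(E)$) is intrinsic and does not depend on the lift, which is clear since it is read off from $\C\tilde C$, $\dd\C\Delta$ and the complex orientation alone.

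The main obstacle I anticipate is the careful bookkeeping at the boundary $\R\tilde C^\circ$: one must verify that $\tilde l$ genuinely extends continuously (and not just separately on each side) across each arc and across each toric-divisor point $p_e$, and that the "half-loop in $S$" computation of the vertex jump is compatible with the cyclic order induced by the complex orientation rather than its reverse — in other words, that the sign/orientation conventions in \eqref{id3} (outer normal $\nuu(E)$, positive multiplicity $m_e$) match the boundary orientation of $S$. This is exactly the place where transversality is not assumed, so $m_e$ may exceed $1$ and the local model is $t\mapsto t^{m_e}$ in the coordinate vanishing on $E$; handling that local model and its monodromy carefully is the crux. The rest is standard covering-space theory plus the triviality of the homomorphism \eqref{homS} guaranteed by toric type I.
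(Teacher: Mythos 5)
Your proposal follows essentially the same route as the paper's proof: toric type I makes the relevant map to $H_1(\ctor)=\Z^2$ trivial, so $\tilde S$ lifts under the exponential covering rescaled by $\pi$; the vertices are the constant values of $\Im\tilde l$ on the arcs (with parity matching the quadrant), the edge jumps come from the end wrapping $m_e$ times around the geodesic determined by $\nuu(E)$ (your half-loop monodromy computation), and uniqueness is exactly the $2\Z^2$ deck-group ambiguity. One small slip: the intermediate covering target should be $\ctor\cong\R^2\times(\R/2\pi\Z)^2$ rather than $\R^2\times(\R/\pi\Z)^2$, but since you normalize via $e^{\pi\tilde l}$ the deck group $2\Z^2$ you invoke is correct and the argument is unaffected.
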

\begin{proof}
Since $\R C$ is of toric type $I$ the surface
$\tilde S\subset\ctor$ lifts under the exponent
map $\C^2\to\ctor$. Translating the lift
by integer multiples of $\pi$ if needed
ensures that $(a,b)+2i\Z^2\subset\C^2$ corresponds to
the lift of the $((-1)^a,(-1)^b)$-quadrant in $\ctor$.
Denote this lift with $\tilde l$, and
define the map $a$ on the vertices
of $\Sigma$ by \eqref{indR}.
An edge $e\subset\Sigma$ 
is mapped to the image of the accumulation set
at the end of $\tilde S$ corresponding to $e$.
%
To check the condition \eqref{id3}
we change coordinates
in $\ctor$ multiplicatively so that
that the toric divisor corresponding to $e$
is the $x$-axis.
Then $\tilde l$ maps the accumulation set
at the $e$-end of $\tilde S$
to the vertical interval of length $2m_e$.
Reversing the coordinate change
we recover an interval parallel to $\nuu(E)$.
\end{proof}

Note that for each connected component
$K\subset\R\tilde C$ (which is necessarily closed)
with $K\cap\dd\R\Delta\neq\emptyset$
the formula \eqref{id3}
already determines the part $a(K):\Sigma(K)\to\R^2$
corresponding to $K$ of the index
diagram $a:\Sigma\to\R^2$ up to
a translation in $\R^2$. Indeed it suffices to
choose arbitrarily $\alpha(K^\circ)$ of an arc
$K^\circ\subset K\setminus\dd\R\Delta$ and
proceed inductively.
\begin{prop}\label{closedK}
If $\R C^\circ$ is a curve of toric type I
then
the broken line $a(\Sigma(K))$ resulting
from inductive application of \eqref{id3}
is closed for any
connected component $K$ of $\R\tilde C$
with $K\cap\dd\R\Delta\neq\emptyset$.

Conversely,
suppose that $\R\tilde C$ is an M-curve
(i.e. the number of its components
is one plus the genus of $\C\tilde C$) with
$(\C\tilde C\setminus\R\tilde C)\cap\dd\C\Delta=\emptyset$
such that 
the broken line defined inductively by \eqref{id3}
for every
connected components $K\subset\R\tilde C$
is closed.
Then $\R C^\circ$
has toric type I.
\end{prop}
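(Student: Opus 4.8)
The plan is to get the first assertion essentially for free from the lift $\tilde l$ supplied by Proposition~\ref{IDexists}, and to prove the converse by a homological computation in which the $M$‑curve hypothesis does the essential work of reducing everything to one real component at a time. \textbf{First assertion.} Let $K$ be a component of $\R\tilde C$. If $K\cap\dd\R\Delta=\emptyset$ then $K^\circ=K$ is a single arc and $\Sigma(K)$ is a single vertex, so there is nothing to prove; assume therefore that $K$ meets $\dd\R\Delta$ in points $p_{e_1},\dots,p_{e_l}$, with $p_{e_j}\in\C E_j$ and local intersection multiplicity $m_{e_j}$, splitting $K$ into arcs $K^\circ_1,\dots,K^\circ_l$ in the cyclic order fixed by the complex orientation. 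Since $\R C^\circ$ has toric type~I, Proposition~\ref{IDexists} provides the map $\tilde l\colon\tilde S\to\C^2$, and $K^\circ\subset\R\tilde C^\circ\subset\tilde S$. On $\R\tilde C^\circ$ the coordinatewise exponential $e^{\pi\tilde l}$ takes real values, so $\Im\tilde l$ is continuous and $\Z^2$‑valued there, hence equal to a constant $v_j$ on each $K^\circ_j$. By the same local analysis carried out in the proof of Proposition~\ref{IDexists} --- near $p_{e_j}$ the end of $\tilde S$ winds $m_{e_j}$ times around the geodesic dual to $\vec n(E_j)$ --- a small path in $S$ joining $K^\circ_j$ to $K^\circ_{j+1}$ around $p_{e_j}$ changes $\Im\tilde l$ by exactly $m_{e_j}\vec n(E_j)$; thus $v_{j+1}-v_j=m_{e_j}\vec n(E_j)$, which is precisely relation \eqref{id3}, and the broken line produced inductively from \eqref{id3} is exactly the sequence of values of the single‑valued function $\Im\tilde l$ along $K^\circ$. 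Since $K$ is a topological circle, one full cycle returns to $v_1$, so $\sum_j m_{e_j}\vec n(E_j)=\sum_j(v_{j+1}-v_j)=0$ and $\Sigma(K)$ is closed.

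\textbf{Converse, topological reduction.} Now assume $\R\tilde C$ is an $M$‑curve, that $S\cup S'$ is disjoint from the preimage of $\dd\C\Delta$, and that $\Sigma(K)$ is closed for every component $K$ of $\R\tilde C$; write $g$ for the genus of $\C\tilde C$, so that $\R\tilde C$ has components $K_1,\dots,K_{g+1}$. First I would record the standard fact that an $M$‑curve is automatically of type~I and that each half $S$, with $\dd S=\R\tilde C$, is planar: the identity $\chi(\C\tilde C)=2-2g$ together with $\chi(\R\tilde C)=0$ rules out connectedness of $\C\tilde C\setminus\R\tilde C$ and forces each of its two halves to be a sphere with $g+1$ holes. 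Since $S\cup S'$ avoids the preimage of $\dd\C\Delta$ we have $S,S'\subset\C\tilde C^\circ$, hence $\C\tilde C^\circ\setminus\R\tilde C^\circ=S\sqcup S'$; because complex conjugation acts by $-\Id$ on $H_1(\ctor)$, it suffices to show that \eqref{homS} vanishes on $H_1(S)$. Planarity of $S$ gives $H_1(S)\cong\Z^{g}$ generated by the boundary classes $[K_1],\dots,[K_{g+1}]$, so the converse reduces to proving that each $[K_i]$ maps to $0$ in $H_1(\ctor)$.

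\textbf{Converse, the winding computation.} Fix $i$. If $K_i\cap\dd\R\Delta=\emptyset$ then $K_i$ lies in a single quadrant of $\rtor$, which is contractible in $\ctor$, so $[K_i]=0$. Otherwise I would represent $[K_i]\in H_1(S)$ by the loop $\gamma$ obtained by pushing $K_i$ off the boundary into $S$ through a collar, coherently at all the crossing points $p_{e_j}$, and compute its image in $H_1(\ctor)$ through the winding of $\Arg\circ\gamma$: along the parts of $\gamma$ hugging the real arcs of $K_i$ the argument is constant, while near $p_{e_j}$ the local model from the proof of Proposition~\ref{IDexists} shows that $\Arg$ changes by $\pi\,m_{e_j}\vec n(E_j)+o(1)$ as the push‑in shrinks. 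Since $[\gamma]$ is homotopy invariant, letting the push‑in tend to $0$ yields $[\gamma]=\frac12\sum_j m_{e_j}\vec n(E_j)$; the hypothesis that $\Sigma(K_i)$ is closed says exactly $\sum_j m_{e_j}\vec n(E_j)=0$, so $[K_i]=0$. Hence \eqref{homS} vanishes on $H_1(S)$, and by conjugation symmetry on $H_1(S')$ as well, so $\R C^\circ$ has toric type~I.

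\textbf{Main obstacle.} I expect the only genuine difficulty to be the local bookkeeping at the crossing points: making the pushed‑in loop $\gamma$ well defined by choosing the side $S$ coherently, and checking that its $\Arg$‑winding picks up precisely $\frac12\sum_j m_{e_j}\vec n(E_j)$ with signs matching the cyclic order used in \eqref{id3} --- this is the same computation that underlies Proposition~\ref{IDexists}, now run once around a loop. On the topological side one must be careful that it is the $M$‑curve hypothesis, and nothing weaker, that makes each half $S$ planar and hence $H_1(S)$ spanned by the real boundary components; without this the converse genuinely fails.
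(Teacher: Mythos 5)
Your proof is correct and follows essentially the same route as the paper: the first assertion is deduced from the single‑valuedness of the lift $\tilde l$ furnished by Proposition \ref{IDexists}, and the converse uses that the M‑curve hypothesis makes each half of $\C\tilde C\setminus\R\tilde C$ a planar surface whose first homology is generated by boundary‑parallel loops, whose images in $H_1(\ctor)$ vanish precisely when the $\Sigma(K)$ are closed. The explicit $\Arg$‑winding bookkeeping (and your reading of the first part as really concerning components with $K\cap\dd\R\Delta\neq\emptyset$) is left implicit in the paper's two‑line proof, but it is exactly the intended argument.
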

\begin{proof}
The first part of the statement
is a corollary of Proposition \ref{IDexists}.
Conversely, for an $M$-curve $\R \tilde C$
each component of $\C\tilde C\setminus\R\tilde C$
is a sphere with punctures corresponding to 
the components of $\R \tilde C$.
The homology class of a loop for each component
is determined inductively by \eqref{id3}.
It is zero by our hypothesis.
\end{proof}



\subsection{Quantum index and toric complex orientation formula}
Let $\R C^\circ\subset\rtor$ be a curve
of toric type I enhanced with the complex
orientation corresponding to a half
$S\subset\C\tilde C\setminus\R\tilde C$.
Denote with
\begin{equation}\label{AreaSigma}
\Area \Sigma \in\frac 12\Z
\end{equation}
the signed
area (with multiplicities) enclosed by $a(\Sigma)$
in $\R^2$.
%

Let $p\in\R^2$ be a point,
let $R_\epsilon\subset\R^2$ be the oriented ray
emanating from $p$
in a generic direction $\epsilon$ in $\R^2$.
Define $\lk_\epsilon(p,\Sigma)$ 
as the intersection number of 
the image $a(\Sigma)$ and $R_\epsilon$
in points other than $p$.
If $p\notin a(\Sigma)$ then this number is the
index
of $p$ with respect to $a(\Sigma)$
(considered in Section 2.3),
and does not depend on 
the choice of $\epsilon$.
Otherwise,  $\lk_\epsilon(p,\Sigma)$
depends on $\epsilon$.

For each quadrant $Q=((-1)^a,(-1)^b)\R_{>0}^2$ we define
\begin{equation}\label{lQ}
\lk_\epsilon(Q,\Sigma)=
\sum\limits_{k_a,k_b\in\Z}
\lk_\epsilon((a+2k_a,b+2k_b),\Sigma)\in\Z.
\end{equation}

Any connected component $K\subset \R\tilde C$
disjoint from $\dd\R\Delta$ is contained
in a single quadrant $Q$.
The image $\Log(K)$ is a closed oriented curve in $\R^2$.
Let $\lambda(K)\in\Z$ be the rotation number of
$\Log(K)$, i.e.
the degree of the logarithmic Gau{\ss} map
of $K\subset\R^2$.
(E.g. if $K\subset\R^2$
is a positively oriented embedded circle contained in the $(+,+)$- or
$(-,-)$-quadrant (resp. in the $(+,-)$- or $(-,+)$-quadrant)
then $\lambda(K)=1$ (resp. $\lambda(K)=-1$).)
Any point of $S\cap Q$ is
a real isolated singular point $p\in \R C^\circ$.
We denote with $\lambda(p)\in\Z$ the intersection number of $S$
and $Q$.
Recall that the orientation of $\rtor$ (and thus,
of $Q$)
is defined in Section 2.3 as the pull-back
of the standard orientation of $\R^2$ by $\Log|_Q$).

If $K^\circ\subset\R\tilde C^\circ$ is 
a connected component (not necessarily compact)
then the local degree
of the oriented logarithmic Gau{\ss} map
$\tilde\gamma|_K:K\to\tilde{\rp}^1$ at a point 
$\epsilon\in\tilde{\rp}^1$ may depend
on the choice of $\epsilon$.
For $(a,b)\in\Z^2$ and $\epsilon\in\tilde{\rp}^1
\setminus\tilde{{\mathbb Q}{\mathbb P}}^1$
we set 
\begin{equation}
\label{lambda-ab}
\lambda_\epsilon(a,b)=
-\sum\limits_{K}\lambda(K)+\sum\limits_p\lambda(p),
\end{equation}
where the sums are taken over all components
$K^\circ\subset\R\tilde C^\circ$
with $\alpha(K^\circ)=(a,b)$
and all isolated singular
points $p$ of $\R C^\circ$ with $\Im \tilde l(p)=(a,b)$.
The following statement is straightforward 
(with the help of
the maximum principle for $\Im\tilde l$).
\begin{prop}\label{prop-sigmabar}
The number $\lambda_\epsilon(a,b)$ does not depend on $\epsilon$
if $(a,b)\notin a(\Sigma)$.
If $(a,b)\notin\bar\Sigma$ then $\lambda_\epsilon(a,b)=0$.
\end{prop}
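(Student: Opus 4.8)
The plan is to identify $\lambda_\epsilon(a,b)$ with a boundary degree of the logarithmic Gau{\ss} map restricted to a sublevel set of $\Im\tilde l$, and then to read off both assertions from the maximum principle for harmonic functions. More precisely, fix the irrational direction $\epsilon$ and fix $(a,b)\in\Z^2$. The components $K^\circ\subset\R\tilde C^\circ$ with $\alpha(K^\circ)=(a,b)$ together with the isolated singular points $p$ with $\Im\tilde l(p)=(a,b)$ are exactly the pieces of the real locus $\R\tilde C^\circ$ lying in the fibre $(\Im\tilde l)^{-1}(a,b)$. I would regard $\lambda_\epsilon(a,b)$ as the signed count of those pieces weighted by the local contribution of the oriented logarithmic Gau{\ss} map $\tilde\gamma$ at $\epsilon$; the sign conventions in \eqref{lambda-ab} are precisely those that make this count agree with the intersection number of $\R\tilde C^\circ$ with the boundary of a thin tube around the level value $(a,b)$, where the tube is swept out in the $\epsilon$-direction.

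First I would make rigorous the statement that $\Im\tilde l$ is harmonic on $\tilde S\setminus\R\tilde C^\circ$: this is immediate from Definition \ref{idiagram}, since $\tilde l$ is holomorphic there and the imaginary part of a holomorphic function is harmonic (the last sentence of Definition \ref{idiagram} already records this). On $\R\tilde C^\circ$ the map $\tilde l$ takes the constant value $v(K^\circ)=(a,b)$ on each component. Next I would invoke the open mapping / maximum principle for the harmonic function $\Im\tilde l$ componentwise: on each component $S$ of $\C\tilde C^\circ\setminus\R\tilde C^\circ$ the function $\Im\tilde l$ attains its extrema on the boundary $\R\tilde C^\circ$ together with the accumulation sets of the ends, and by the description of the ends in Proposition \ref{IDexists} those accumulation sets are precisely the edges of $\Sigma$. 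Hence the image of $\tilde S$ under $\Im\tilde l$ is contained in $\bar\Sigma$. This already shows that if $(a,b)\notin\bar\Sigma$ then the fibre over $(a,b)$ is empty, so both sums in \eqref{lambda-ab} are empty and $\lambda_\epsilon(a,b)=0$, which is the second assertion.

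For the first assertion — independence of $\epsilon$ when $(a,b)\notin\Sigma$ — I would argue that the only way $\lambda_\epsilon(a,b)$ can jump as $\epsilon$ varies is when the ray $R_\epsilon$ (used implicitly in defining the local degree of $\tilde\gamma$) sweeps past a vertex or edge of $\Sigma$ situated at the lattice point $(a,b)$, i.e. when $(a,b)\in\Sigma$ itself. If $(a,b)\notin\Sigma$, then no component $K^\circ$ with $\alpha(K^\circ)=(a,b)$ can be unbounded: an unbounded component has ends accumulating on edges of $\Sigma$, and by the harmonicity/maximum-principle argument such a component's closure would force $(a,b)$ to lie on $\Sigma$. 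Therefore every $K^\circ$ contributing to \eqref{lambda-ab} is a closed (compact) component of $\R\tilde C^\circ$, lying in a single quadrant, for which $\lambda(K)=\deg\tilde\gamma|_K$ is a genuine degree of a map $K\to\rp^1$ from a closed curve and hence independent of the chosen point $\epsilon\in\tilde{\rp}^1\setminus\tilde{\Q\pp}^1$; likewise the singular-point contributions $\lambda(p)$ are local intersection numbers not depending on $\epsilon$. Summing, $\lambda_\epsilon(a,b)$ is independent of $\epsilon$.

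The main obstacle I anticipate is the bookkeeping in the previous paragraph: one must verify carefully that "$(a,b)\notin\Sigma$ forces every contributing component to be compact," since a priori $\Im\tilde l$ could be constant on an unbounded component without its ends visibly landing on $\Sigma$. This is handled by the explicit description of the ends of $\tilde S$ in the proof of Proposition \ref{IDexists} (each end wraps $m_e$ times around a geodesic in the direction $\vec n(E)$, and its accumulation set in the $\Im\tilde l$-picture is exactly the edge $m_e\vec n(E)$ of $\Sigma$), combined with the fact that a bounded harmonic function that is constant on the topological boundary of a punctured surface extends continuously with that constant value — so constancy of $\Im\tilde l$ on a non-leaf arc is incompatible with $(a,b)$ being off $\Sigma$. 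Once this is pinned down, both statements follow as indicated, and the conclusion is exactly the displayed content of Proposition \ref{prop-sigmabar}.
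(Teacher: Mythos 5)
Your argument is correct and follows essentially the same route as the paper, which justifies the proposition by the harmonicity of $\Im\tilde l$ and the maximum principle (so that all real indices and solitary-point indices lie in $\bar\Sigma$), together with the observation that the $\epsilon$-dependence in \eqref{lambda-ab} can only enter through the non-compact (arc) components, whose indices are vertices of the broken line $\Sigma$. Your write-up merely spells out these two steps in more detail; the minor slips (e.g.\ calling the solitary points part of $\R\tilde C^\circ$) do not affect the argument.
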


For each quadrant $Q=((-1)^a,(-1)^b)\R_{>0}^2\subset\rtor$
we may take the sum
\begin{equation}
\label{lambdaQ}
\lambda_\epsilon(Q)=
\sum\limits_{k_a,k_b\in\Z}\lambda_\epsilon(a+2k_a,b+2k_b).
\end{equation}
The result is independent on the translation
ambiguity in the definition of
the real index map.

\begin{thm}
\label{thm-index}
If $\R C^\circ\subset\rtor$ is a real algebraic curve
of toric type I
enhanced with a choice of its complex orientation
then
\begin{equation}\label{kSigma}
k(\R C)=\Area\Sigma(\R C).
\end{equation}
For each $(a,b)\in\Z^2$
and $\epsilon\in\tilde{\rp}^1
\setminus\tilde{{\mathbb Q}{\mathbb P}}^1$
we have
\begin{equation}\label{co-ab}
\lambda_\epsilon(a,b)=\lk_\epsilon((a,b),\Sigma).
\end{equation}
\end{thm}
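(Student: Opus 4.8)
The plan is to deduce the area formula \eqref{kSigma} from Proposition \ref{2kdeg} by analyzing the map $2\Arg$ via its factorization through the lift $\tilde l$ of Definition \ref{idiagram}. First I would recall that by Proposition \ref{2kdeg} the integer $2k(\R C)$ is the degree of $2\Arg\colon \C C^\circ\setminus\R C^\circ\to(\R/\pi\Z)^2$, with the orientation on the source given by the complex orientation on $S$ and its opposite on $\conj(S)$. Since $\conj$ reverses orientation and $2\Arg\circ\conj = -2\Arg$, the two components $S$ and $\conj(S)$ contribute equally, so $2k(\R C)$ equals twice the degree of $2\Arg|_S\colon S\setminus\R C^\circ\to(\R/\pi\Z)^2$, hence $k(\R C) = \deg(2\Arg|_S)$. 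Now $\tilde l$ is defined on $\tilde S = (S\setminus\dd\C\Delta)\cup\R\tilde C^\circ$ with $e^{\pi\tilde l}$ equal to the tautological map to $\ctor$; writing $\tilde l = u + i v$ with $u,v$ real harmonic-conjugate pairs coordinatewise, we get $\Log = \pi u$ and $2\Arg = 2\pi v \pmod{2\pi\Z}$, i.e. $2\Arg$ on $S$ is $v$ read modulo $\Z^2$ (after rescaling $(\R/\pi\Z)^2 \cong \R^2/\Z^2$ via division by $\pi$, with an extra factor $2$ matching the doubling in $2\Arg$). The key point is that the boundary $\dd\tilde S = \R\tilde C^\circ$ is mapped by $v = \Im\tilde l$ to the vertex set of $\Sigma$: each component $K^\circ$ goes to the single point $v(K^\circ)\in\Z^2$, which becomes a single point of $\R^2/\Z^2$. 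Thus the map $v\colon \tilde S\to\R^2$ collapses the whole boundary to lattice points, and after passing to $\R^2/\Z^2$ it is a map from a closed surface (the quotient of $\tilde S$ collapsing each boundary component to a point).

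Next I would compute $\deg(2\Arg|_S)$ as a signed count of preimages of a generic point $t\in\R^2/\Z^2$, or better, by the change-of-variables/degree formula $\deg = \frac{1}{\Vol(\R^2/\Z^2)}\int_{\tilde S} v^*(\,d\theta_1\wedge d\theta_2\,)$. Since $v$ is harmonic conjugate to $u$ coordinatewise, $v^*(d\theta_1\wedge d\theta_2) = dv_1\wedge dv_2$ is an exact $2$-form on $\tilde S$ and the integral is governed by the boundary behavior. The precise statement I want is: for a generic lattice-translate class, the signed number of preimages of $t$ under $v\colon\tilde S\to\R^2$ near a given lattice point $(a,b)$ equals the signed count of how many sheets of $v(\tilde S)$ cover a neighborhood of $(a,b)$, and summing these with the appropriate $2\Z^2$-periodicity (as in \eqref{lQ}, \eqref{lambdaQ}) recovers $\Area\Sigma$, because $\Area\Sigma$ is by definition $\int_{\R^2}\lk((a,b),\Sigma)\,\approx\sum \lk$ weighted over the plane. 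Concretely: $v$ maps $\tilde S$ to a region whose boundary, traversed according to the complex orientation of $\R\tilde C^\circ$, is exactly the oriented broken line $\Sigma$ — this is the content of the edge rule \eqref{id3}, since near an end $e$ of $\tilde S$ corresponding to $p_e\in\C E$, the argument winds $m_e$ times around the geodesic in the direction $\nuu(E)$, so $v$ traces the edge $m_e\nuu(E) = \vec{v_1 v_2}$. Hence $\deg(v|_S) = $ signed area enclosed by $\Sigma$, which is \eqref{kSigma}.

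For the second formula \eqref{co-ab} I would argue locally at each lattice point $(a,b)$. By the maximum principle for $\Im\tilde l$ (already invoked before Proposition \ref{prop-sigmabar}), only components $K^\circ$ and solitary singular points $p$ with $\Im\tilde l$-value $(a,b)$ contribute near $(a,b)$. For a closed component $K$ disjoint from $\dd\R\Delta$ the image $v(K^\circ)$ is the constant $(a,b)$, while the nearby part of $v(S)$ wraps around $(a,b)$ with winding number controlled by the logarithmic Gauss map: the degree of $\tilde\gamma|_K$ at an irrational direction $\epsilon$ is exactly $\lambda(K)$, and similarly the contribution of a solitary singular point $p$ is $\lambda(p)$; the signs are arranged (the minus sign in \eqref{lambda-ab}) so that $-\sum\lambda(K)+\sum\lambda(p)$ equals the local covering multiplicity of $v$ over a small $\epsilon$-translate of $(a,b)$, which is precisely $\lk_\epsilon((a,b),\Sigma)$. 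Summing the two formulas over a fundamental domain recovers consistency with \eqref{kSigma} via $\Area\Sigma = \sum_{(a,b)} \lk((a,b),\Sigma)$ on the nose for the half-integer signed area.

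The main obstacle I expect is making the boundary analysis of $v = \Im\tilde l$ fully rigorous: one must show that $v$ extends continuously across $\R\tilde C^\circ$ to the collapsed boundary, that near each puncture of $S$ the real part $u$ escapes to infinity in the direction $\pm\nuu(E)$ while $v$ stays bounded and winds exactly $m_e$ times, and — most delicately — that the signed-area integral $\int_{\tilde S} dv_1\wedge dv_2$ really localizes to $\Area\Sigma$ without spurious contributions from the ends or from the collapsed boundary circles. This amounts to a careful Stokes-theorem argument on $\tilde S$ with its ends, keeping track of orientations imposed by the complex orientation of $S$; the bookkeeping of signs between $\lambda(K)$, $\lambda(p)$, the quadrant orientations (counterclockwise in the $(+,+)$ and $(-,-)$ quadrants, clockwise otherwise) and the orientation of $\Sigma$ is where the real work lies. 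Everything else — the reduction via Proposition \ref{2kdeg}, the existence and uniqueness of $\Sigma$ from Propositions \ref{IDexists} and \ref{closedK}, and the maximum-principle localization — is already in place in the preceding text.
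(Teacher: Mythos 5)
Your plan is correct and follows essentially the same route as the paper: the paper also lifts the (doubled) argument map of $S$ to the universal cover via the toric type I condition (through $\tilde l$ and Lemma \ref{lem-larg}/Proposition \ref{2kdeg}), identifies the boundary of the image $2$-chain with $\Sigma$ to get $k(\R C)=\Area\Sigma(\R C)$, and proves \eqref{co-ab} by localizing at a small $\epsilon$-translate of each lattice point, where the preimages correspond exactly to solitary points of index $(a,b)$ and to points of components of real index $(a,b)$ whose logarithmic Gau{\ss} map hits $\epsilon$. The boundary/orientation bookkeeping you flag as the remaining work is precisely what the paper's (much terser) proof also leaves implicit.
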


\begin{coro}
For a curve of toric type I with the index
diagram $\Sigma$ we have
\begin{equation}\label{co-Q}
\lambda_\epsilon(Q)=\lk_\epsilon(Q,\Sigma)
\end{equation}
for each quadrant $Q\subset\rtor$.
\end{coro}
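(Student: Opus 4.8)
The plan is to deduce the corollary directly from the lattice-point identity \eqref{co-ab} of Theorem \ref{thm-index}, summed over the relevant coset of $2\Z^2$. Fix a quadrant $Q=((-1)^a,(-1)^b)\R_{>0}^2$ and an irrational direction $\epsilon\in\tilde{\rp}^1\setminus\tilde{{\mathbb Q}{\mathbb P}}^1$. By the definition \eqref{lambdaQ} of $\lambda_\epsilon(Q)$ and the definition \eqref{lQ} of $\lk_\epsilon(Q,\Sigma)$, both quantities are sums over $(k_a,k_b)\in\Z^2$ of the corresponding values at the lattice point $(a+2k_a,b+2k_b)$. Hence, applying \eqref{co-ab} term by term to each such lattice point yields $\lambda_\epsilon(Q)=\lk_\epsilon(Q,\Sigma)$, provided that both series are genuine finite sums.

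For the finiteness I would appeal to the compact support of both summands inside $\bar\Sigma$. On the one hand, Proposition \ref{prop-sigmabar} gives $\lambda_\epsilon(a',b')=0$ for $(a',b')\notin\bar\Sigma$, and $\bar\Sigma$, being the convex hull of a finite broken line, contains only finitely many lattice points; so only finitely many terms $\lambda_\epsilon(a+2k_a,b+2k_b)$ are nonzero. On the other hand, if $p\notin\bar\Sigma$ then a generic ray $R_\epsilon$ emanating from $p$ can be chosen to miss the convex set $\bar\Sigma$, and hence $\Sigma$, entirely, so $\lk_\epsilon(p,\Sigma)=0$; thus the right-hand series also has only finitely many nonzero terms, indexed by the same finite set of coset representatives lying in $\bar\Sigma$. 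Consequently both sides of \eqref{co-Q} are finite sums to which \eqref{co-ab} applies entrywise.

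The only point requiring a word of care is the case where one of the lattice points $(a+2k_a,b+2k_b)$ happens to lie on $\Sigma$ itself: there both $\lambda_\epsilon$ and $\lk_\epsilon(\cdot,\Sigma)$ are understood via the small translation of the point in the direction $\epsilon$, as in the discussion preceding \eqref{lQ}. Since a single irrational $\epsilon$ is used throughout, this convention is applied consistently across all terms of the sum, so the entrywise equality \eqref{co-ab} is preserved under summation. I do not expect a genuine obstacle here: the corollary is essentially a bookkeeping consequence of Theorem \ref{thm-index}, the substantive inputs being only the identity \eqref{co-ab} and the vanishing of both $\lambda_\epsilon$ and $\lk_\epsilon(\cdot,\Sigma)$ outside the bounded region $\bar\Sigma$.
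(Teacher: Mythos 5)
Your proof is correct and is essentially the argument the paper intends: the corollary is stated as an immediate consequence of Theorem \ref{thm-index}, obtained by summing the lattice-point identity \eqref{co-ab} over the coset $(a+2\Z)\times(b+2\Z)$ appearing in the definitions \eqref{lQ} and \eqref{lambdaQ}, with finiteness of both sums guaranteed by Proposition \ref{prop-sigmabar} and the vanishing of the linking number outside $\bar\Sigma$. Your handling of lattice points lying on $\Sigma$ via the fixed irrational direction $\epsilon$ matches the paper's convention, so there is nothing to add.
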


The equality \eqref{co-Q} 
may be viewed
as {\em the toric complex orientation formula} for toric type I curves.

\begin{coro}\label{coro-number}
The total number of closed components of
a curve $\R\tilde C^\circ$ of toric type I
and
its solitary real singularities
is not less than the number of lattice
points $(a,b)\in\Z^2\setminus\Sigma$
with $\lk((a,b),\Sigma)\neq 0$.
\end{coro}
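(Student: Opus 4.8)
The plan is to read the Corollary off the toric complex orientation formula, i.e.\ from \eqref{co-ab} in Theorem \ref{thm-index}, combined with Proposition \ref{prop-sigmabar} and the very definition \eqref{lambda-ab} of $\lambda_\epsilon(a,b)$.

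First I would fix an irrational direction $\epsilon\in\tilde{\rp}^1\setminus\tilde{{\mathbb Q}{\mathbb P}}^1$ and take a lattice point $(a,b)\in\Z^2\setminus\Sigma$. Since $(a,b)\notin\Sigma$, Proposition \ref{prop-sigmabar} tells us that $\lambda_\epsilon(a,b)$ does not depend on $\epsilon$, while the ordinary linking number $\lk((a,b),\Sigma)=\lk_\epsilon((a,b),\Sigma)$ is well-defined; hence \eqref{co-ab} gives $\lambda_\epsilon(a,b)=\lk((a,b),\Sigma)$. Consequently every lattice point $(a,b)\in\Z^2\setminus\Sigma$ with $\lk((a,b),\Sigma)\neq 0$ satisfies $\lambda_\epsilon(a,b)\neq 0$.

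Next I would unwind \eqref{lambda-ab}. For $(a,b)\notin\Sigma$ no non-closed component $K^\circ\subset\R\tilde C^\circ$ can have $\alpha(K^\circ)=(a,b)$, since the vertex $v(K^\circ)=\alpha(K^\circ)$ attached to such a component is an endpoint of an edge of $\Sigma$ and hence lies on $\Sigma$; so in \eqref{lambda-ab} the first sum runs only over the \emph{closed} components $K^\circ$ with $\alpha(K^\circ)=(a,b)$ and the second over the solitary real singularities $p$ of $\R C^\circ$ with $\Im\tilde l(p)=(a,b)$. Therefore $\lambda_\epsilon(a,b)\neq 0$ forces at least one closed component of $\R\tilde C^\circ$ or one solitary real singularity of $\R C^\circ$ to be sent to $(a,b)$ by $\alpha$, respectively by $\Im\tilde l$. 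Since $\alpha$ assigns to each closed component a single lattice point and $\Im\tilde l$ assigns to each solitary singularity a single lattice point, the ``fibers'' over distinct lattice points are pairwise disjoint subsets of the finite set consisting of the closed components of $\R\tilde C^\circ$ together with the solitary real singularities of $\R C^\circ$. As each $(a,b)\in\Z^2\setminus\Sigma$ with $\lk((a,b),\Sigma)\neq 0$ has a non-empty fiber, the number of such lattice points is at most the cardinality of that set, which is exactly the asserted bound.

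I do not expect a genuine obstacle here: all the geometric content is already in Theorem \ref{thm-index}, and what remains is bookkeeping. The two points that need a little care are (i) the observation that, off $\Sigma$, only closed components and solitary singularities can carry index $(a,b)$, so that the inequality bounds precisely the sum of those two counts and not some larger quantity, and (ii) staying inside $\Z^2\setminus\Sigma$, which is simultaneously the locus where $\lambda_\epsilon$ is $\epsilon$-independent (Proposition \ref{prop-sigmabar}) and the locus where the linking number is defined.
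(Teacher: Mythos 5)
Your argument is correct and is essentially the paper's own proof: the paper deduces the corollary in one line from the identity \eqref{co-ab} of Theorem \ref{thm-index}, noting that $\lk((a,b),\Sigma)\neq 0$ forces $\lambda_\epsilon(a,b)\neq 0$ and hence a closed component or solitary real singularity of real index $(a,b)$. Your additional bookkeeping (that non-closed components have index on $\Sigma$, and that fibers over distinct lattice points are disjoint) just makes explicit what the paper leaves implicit.
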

\begin{proof}
If $\lk((a,b),\Sigma)\neq 0$ then
by \eqref{co-ab} $\lambda(a,b)\neq 0$
and thus there exists a closed component or
a solitary real singularity of $\R C^\circ$ 
of real index $(a,b)$.
\end{proof}

\ignore{
The following lemma contains a strengthening of \eqref{co} .
Let $M$ be a connected compact oriented surface of genus equal to the genus of $S$
and with the number of boundary components equal to the number of components
of the diagram $\Sigma(\R C)$.

\begin{lem}\label{prop-co}
If $\R C^\circ$ is of toric type I then there exists a continuous finite-to-one map
\begin{equation}\label{mapM}
\alpha:M\to\R^2
\end{equation}
and a 1-1 correspondence between the points $q$ of
$\alpha^{-1}(\Z^2)\setminus\dd M$ and the union of the set of closed
components $K$ of $\R\tilde C^\circ$ with the set of points
$p\in S$ mapped to $\rtor$ by the normalization map $\C\tilde C\to \C \bar C$
satisfying to the following properties.
\begin{itemize}
\item  The map $\alpha$ is smooth on $M^\circ=M\setminus\dd M$.
\item The image $\alpha(\dd M)\subset\R^2$ is the digram $\Sigma(\R C)$ (well-defined up
to translations by $2\Z^2$).
\item A point $q\in M^\circ$ with $\alpha(q)=(a,b)$
corresponds to a component $K$ or an isolated
real singular point $p$ contained in the quadrant $((-1)^a,(-1)^b)\R_{>0}^2$.
\item The local degree of $\alpha$ at $q$ coincides with $\lambda(K)$
or $\lambda(p)$ for the corresponding component $K$
or the point $p$.
\end{itemize}
Furthermore, for any linear map $L:\R^2\to\R$ the map $L\circ\alpha|_{M^\circ}$
does not have local maxima.
\end{lem}
}

\begin{exa}\label{exa-rq}
All real rational curves which intersect $\dd\R\Delta$ in $\#(\dd\Delta\cap\Z^2)$ points
(counted with multiplicity) have toric type I as $\C\tilde C^\circ\setminus\R\tilde C^\circ$
is the disjoint union of two open disks. Therefore we may compute the quantum index of such
curves with the help of Theorem \ref{thm-index}.

\begin{figure}[h]
\includegraphics[height=25mm]{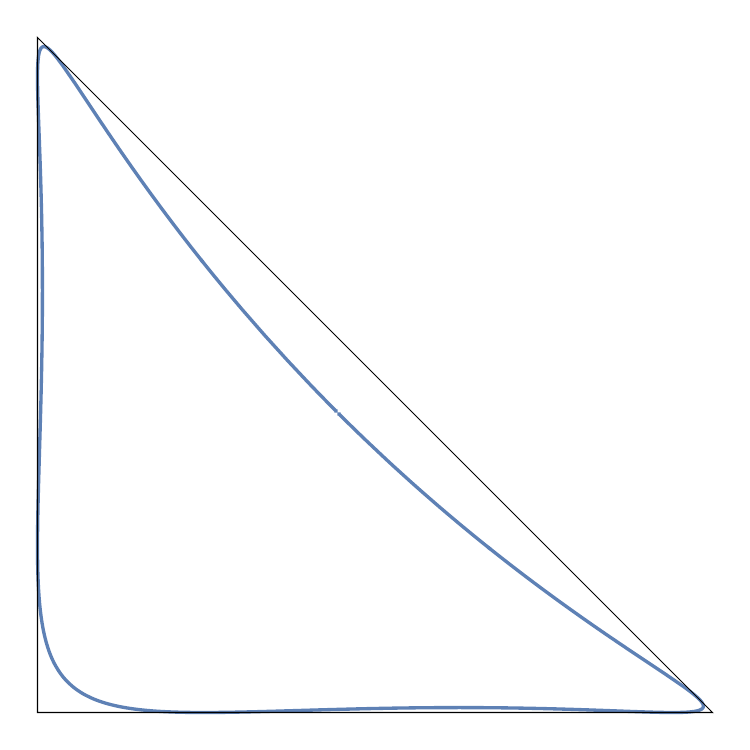}\hspace{5mm}
\includegraphics[height=25mm]{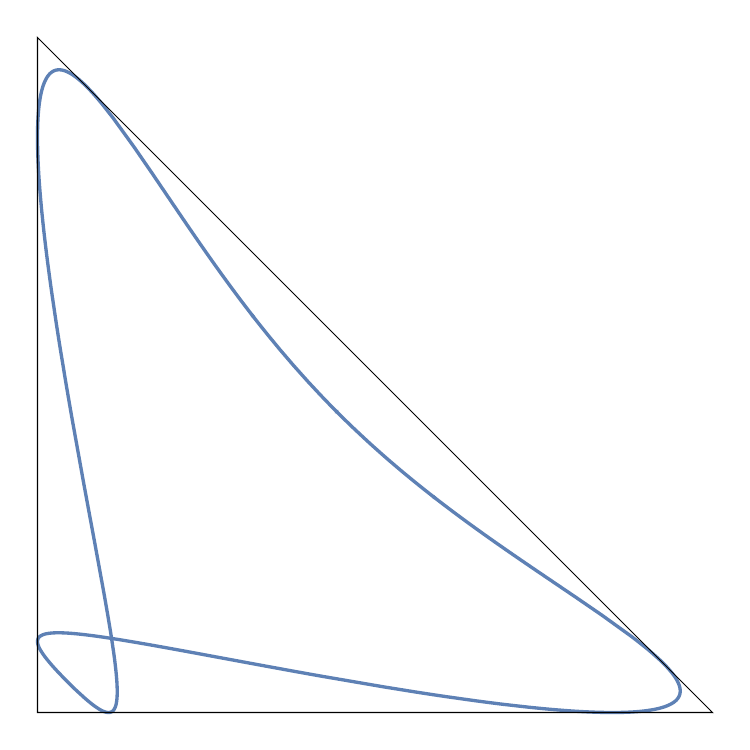}\hspace{5mm}
\includegraphics[height=25mm]{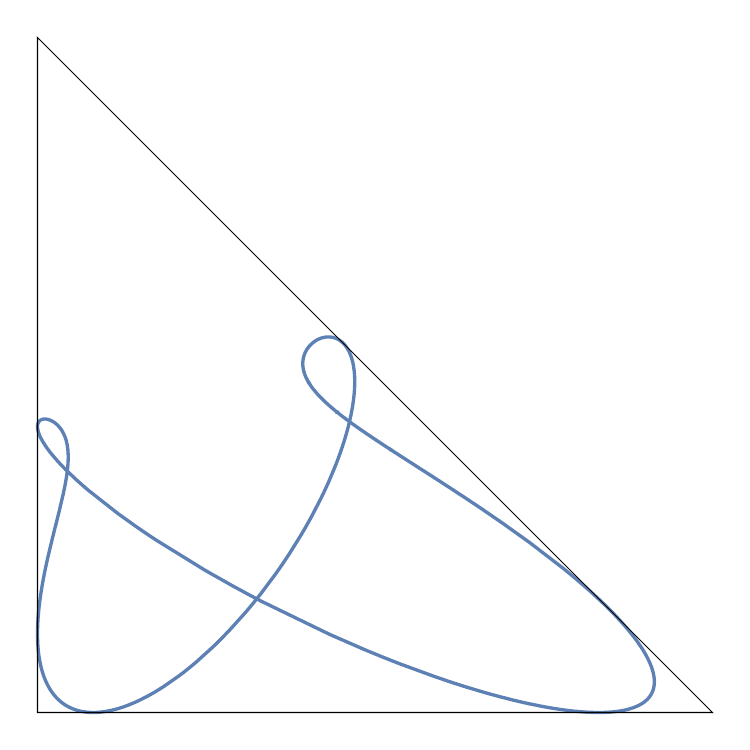}\hspace{5mm}
\includegraphics[height=25mm]{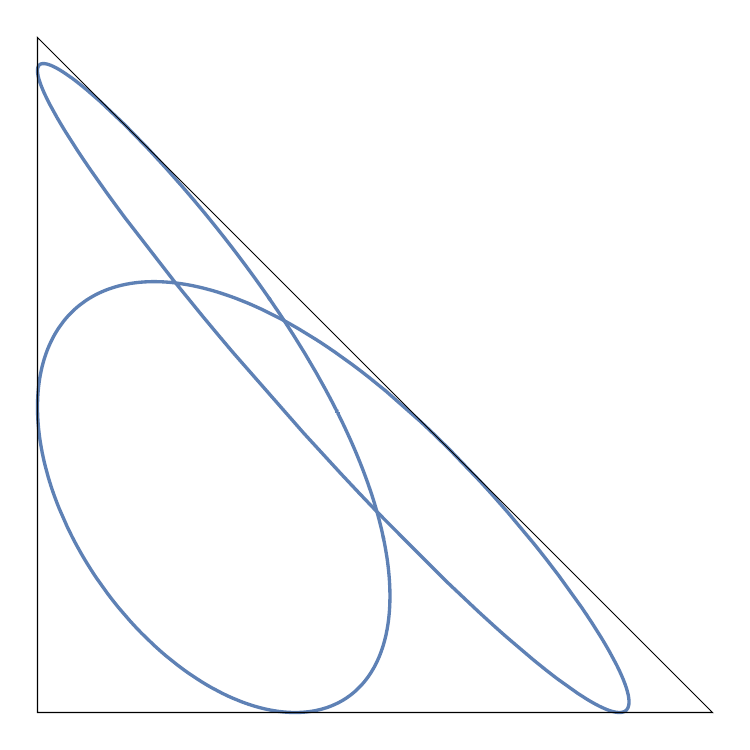}
\vspace{5mm}\\
\includegraphics[height=12.5mm]{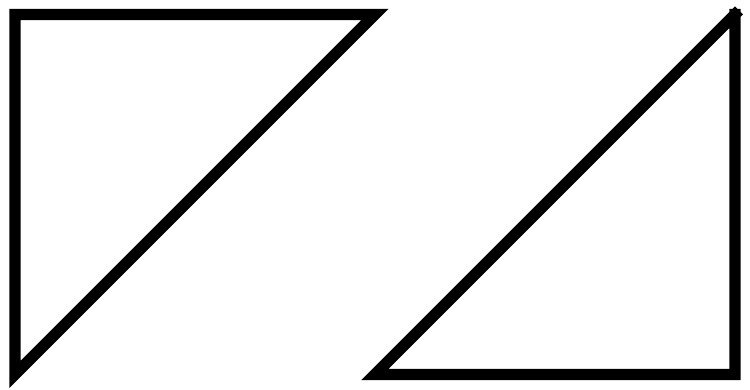}\hspace{6mm}
\includegraphics[height=12.5mm]{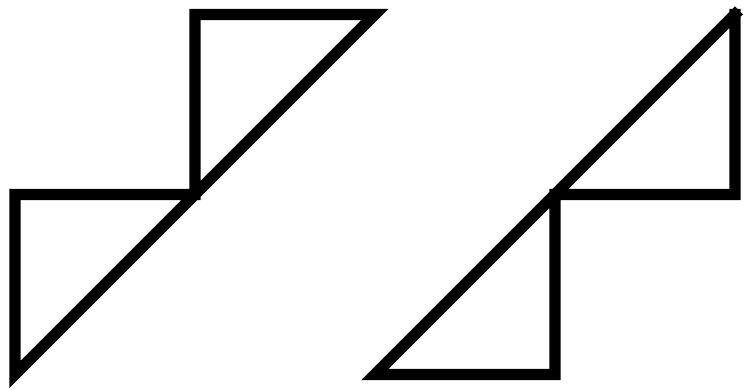}\hspace{6mm}
\includegraphics[height=12.5mm]{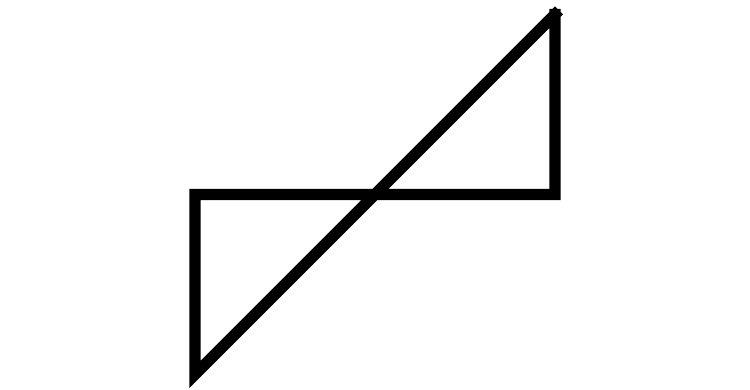}\hspace{6mm}
\includegraphics[height=12.5mm]{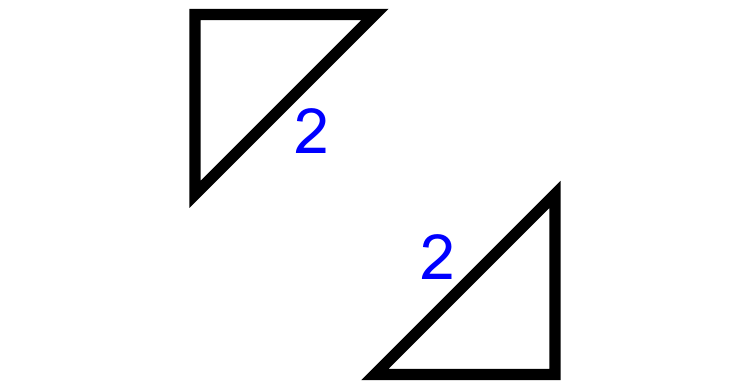}
\caption{Squares of the real conics from Figure \ref{rconics} and their diagrams $\Sigma$
for both possible orientations.
\label{c-diagrams}}
\end{figure}

Figure \ref{c-diagrams} depicts the images of the real conics from Figure \ref{rconics}
under $\Fr^\Delta$ (reparameterized with the help of the moment map). Each of these conics
may be oriented in two different ways producing two different diagrams.
For one of these conics the diagrams for the two opposite orientations coincide.
For the other three conics they are different.

Note that in the case of $\C\Delta=\cp^2$ the orientation can be uniquely recovered
from the diagram as the edges correspond to the normals to $\Delta$. E.g. the vertical
edges are always directed downwards.
\end{exa}

\begin{rem}
The diagram $\Sigma$ may be viewed as a non-commutative version
of the polygon $\Delta$. Here the set of normal vectors is given a cyclic order.

Note that $-\Area\Delta\le \Area\Sigma\le \Area\Delta$ for
any (possibly multicomponent)
closed broken curve $\Sigma$
whose oriented edges are normal vectors to $\Delta$
so that each side $E\subset\Delta$ contribute to $\#(E\cap\Z^2)-1$ normal
vectors (counted with multiplicity).
Furthermore, we have $\Area\Sigma=\pm\Area\Delta$ if and only if
$\Sigma\subset\R^2$ is a single-component
broken curve coinciding with the polygon
$\Delta$ itself rotated by 90 degrees
(as we can represent the primitive
normal vector to a vector $(a,b)\in\Z^2$ by $(-b,a)$ identifying the vector
space $\R^2$ with its dual).

Recall the notion
of cyclically maximal position of $\R \bar C\subset\R\Delta$
in $\R \Delta$ from
Definition 2 of \cite{Mi00}.
It can be rephrased that $\R \bar C$ has a connected component $K$
intersecting $\dd\R\Delta$
in $m=\#(\dd\Delta\cap\Z^2)$ points counted with multiplicity, and
the cyclic order of the intersection points on $K$
agrees with the
cyclic order of the corresponding normal vectors
to $\dd\Delta$.
This condition is equivalent to the equality \eqref{area-eq}.
It was proved in \cite{Mi00} that for each $\Delta$
the topological type of the triad $(\R \Delta;
\R\bar C_\Delta,\dd\R\Delta)$ is unique
if a curve $\R\bar C$ with the Newton polygon
$\Delta$ has cyclically maximal position and 
is transversal to $\dd\R \Delta$.
In this case $(\R \Delta;
\R\bar C_\Delta,\dd\R\Delta)$
is called {\em the simple Harnack $\Delta$-triad}.

The number of points in $(\Delta\setminus\dd\Delta)\cap\Z^2$
is equal to the arithmetic genus $g$ of $\R\bar C$.
On the other hand, Corollary \ref{coro-number} implies that
the total number of closed connected components of $\R C^\circ$ and
its isolated real singular points is at least $g$.
Thus all closed components of $\R C^\circ$ are smooth ovals and all
singular points of $\R C^\circ$ are solitary double points,
and the curve $\C C^\circ$ is a nodal $M$-curve.
We get the following statement.
\end{rem}

\begin{coro}\label{fle}
If $\R C^\circ\subset\rtor$ is a curve of toric type I with
\begin{equation}\label{area-eq}
\Area\Sigma(\R C)=\pm\Area\Delta
\end{equation}
then it is an M-curve whose only singularities are solitary real nodes.

Furthermore,
the topological type of $(\R\Delta;\R\bar C,\dd \R\Delta)$
is obtained from
the simple Harnack $\Delta$-triad $(\R \Delta;
\R\bar C_\Delta,\dd\R\Delta)$
by contracting some of the ovals of $\R C_\Delta^\circ$ to
solitary double points and replacing some $n$-tuples of consecutive transversal intersection points
of $\R \bar C_\Delta$ and $\dd\R\Delta$ (sitting on the same toric divisor)
with points of $n$th order of tangency.
\end{coro}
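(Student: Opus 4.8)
The plan is to combine the Euler-characteristic-type count already set up in the Remark preceding the statement with the known classification of smooth real curves in toric surfaces realizing the maximal position. First I would reduce to the situation analyzed in the Remark: by Corollary \ref{coro-number} the number of closed components of $\R\tilde C^\circ$ together with the solitary real singularities of $\R C^\circ$ is at least the number of lattice points of $\Z^2\setminus\Sigma$ with nonzero linking number, and under hypothesis \eqref{area-eq} the diagram $\Sigma$ is (a translate of) the boundary $\dd\Delta$ rotated by $90^\circ$, so $\bar\Sigma$ is a copy of $\Delta$ and the interior lattice points all have linking number $\pm1$. Hence that lower bound equals $\#((\Delta\setminus\dd\Delta)\cap\Z^2)=g$, the arithmetic genus of $\R\bar C$. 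On the other hand the genus of $\C\tilde C$ is at most $g$, and a closed component of $\R\tilde C^\circ$ that is not an embedded oval, or a real singular point that is not a solitary node, would force either the geometric genus to drop strictly or would contribute extra handles, contradicting the count. This is exactly the argument sketched in the Remark, so I would simply make it precise: conclude that $\C C^\circ$ is a nodal $M$-curve whose only real singularities are solitary nodes and whose closed real components are smooth ovals. This gives the first sentence of the Corollary.

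Next I would address the topological statement. The curve $\R\bar C$ is an $M$-curve, i.e. $\R\tilde C$ has $g+1$ components, and by Proposition \ref{closedK} (or directly from toric type I) each $\Sigma(K)$ is closed; moreover the component $K_0$ meeting $\dd\R\Delta$ does so in $m=\#(\dd\Delta\cap\Z^2)$ points counted with multiplicity, with the cyclic order of intersection points matching that of the normal vectors — this is precisely the cyclically maximal position of Definition 2 of \cite{Mi00}. I would then invoke the uniqueness of the topological type of a smooth real curve in cyclically maximal position established in \cite{Mi00}: there is a model smooth real curve $\R\bar A$, transversal to $\dd\R\Delta$, whose triad $(\R\Delta;\R\bar A,\dd\R\Delta)$ is the unique one with these numerical invariants. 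The passage from $\R\bar A$ to $\R\bar C$ is then read off from the two degenerations allowed by the first part: solitary nodes arise by contracting ovals of $\R A^\circ$ (this is the only way a smooth oval of the $M$-curve $\R\bar A$ can degenerate while keeping $\C C^\circ$ an $M$-curve and keeping the quantum index maximal, since by Example \ref{exam} maximality of $|k|$ is equivalent to being a simple Harnack curve, a property stable under such contractions), and higher-order tangencies with a toric divisor arise by colliding consecutive transversal intersection points lying on the same side $E$ of $\Delta$ — the multiplicity $m_e$ in \eqref{id3} records exactly this collision, and the cyclic-order condition guarantees the colliding points are consecutive on $K_0$.

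The main obstacle I expect is the second part rather than the first: the count giving nodality and the $M$-property is essentially bookkeeping with Corollary \ref{coro-number}, but pinning down the topological type requires genuinely using the simple-Harnack-curve characterization (Example \ref{exam}, citing \cite{MiRu} and \cite{Mi00}) and the structure theory of such curves — in particular the fact that among curves with $\Area\Sigma=\pm\Area\Delta$ the only degenerations of the unique smooth model are oval-contractions and consecutive-point-collisions on a single toric divisor, with no "mixing" (e.g. a tangency cannot develop simultaneously with the oval structure getting more complicated). Establishing that these are the \emph{only} degenerations will rely on the rigidity already encoded in Theorem \ref{thm-index}: the diagram $\Sigma$ is forced to be $\dd\Delta$ rotated, so every edge multiplicity $m_e$ and every real index $\alpha(K^\circ)$ is determined, leaving no freedom beyond the choice of which ovals are contracted and which intersection points are merged. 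Once that rigidity is spelled out, the topological classification follows from the smooth case in \cite{Mi00} by a direct deformation-to-the-model argument.
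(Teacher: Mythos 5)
Your first paragraph is fine: it reproduces the argument of the Remark preceding the Corollary (the count from Corollary \ref{coro-number} against the number of interior lattice points of $\Delta$ forces $\C C^\circ$ to be a nodal M-curve whose closed real components are smooth ovals and whose only singularities are solitary real nodes), and this is how the paper gets the first sentence.

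The genuine gap is in the second part, exactly where you flag "the main obstacle". The paper does not deduce the topological type by deforming to the smooth model of \cite{Mi00}; it proves the needed rigidity directly, and the two ingredients it uses are missing from your proposal. First, to pin down the order of the intersection points along each toric divisor (which is what makes the merged points "consecutive \ldots sitting on the same toric divisor"), the paper applies Corollary \ref{coro-number} not to $\R C^\circ$ but to $\Fr^\Delta(\C C^\circ)$: this image is again of toric type I, its index diagram is $\Sigma(\R C)$ scaled by $2$, so its quantum index is $\pm\Area(2\Delta)$, and the same lattice-point count shows its only singularities are solitary nodes; hence $\Fr^\Delta(\R\bar C)$ has no self-intersections, which forces the order of the intersection points on each divisor $\R E$ to agree with their order along the component $K$. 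Knowing only that the cyclic order on $K$ matches the cyclic order of the normal vectors (cyclically maximal position) does not give this. Second, to fix the mutual position of the ovals the paper uses that the lattice points of $\Sigma$ determine their number and distribution among the quadrants, that Corollary \ref{coro-number} forces all ovals and solitary nodes in a given quadrant to carry the same orientation, and then Rokhlin's complex orientation formula to exclude nesting and to fix their arrangement relative to $K$. Your substitute — the smooth uniqueness theorem of \cite{Mi00} plus the assertion that the only degenerations of the smooth model are oval-contractions and collisions of consecutive boundary points — is precisely what has to be proved; Theorem \ref{thm-index} only fixes the diagram $\Sigma$, not the embedded topology, so the "deformation-to-the-model" step is unsupported as written. (One could instead try to quote \cite{MiRu} for the classification of possibly singular simple Harnack curves of maximal quantum index, but that is a different, genuinely algebraic route; the paper deliberately argues topologically so that, as noted in the remark after the Corollary, the proof extends to flexible and pseudoholomorphic curves.)
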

\begin{proof}
The curve $\Fr^\Delta(\C C^\circ)$ also has toric type I.
Its diagram is obtained from $\Sigma(\R C)$ by scaling both coordinates by 2,
so the quantum index of $\Fr^\Delta(\C C^\circ)$ is equal to $\pm\Area(2\Delta)$.
Corollary \ref{coro-number}
implies that the only singularities of $\Fr^\Delta(\C C^\circ)$ are solitary real nodes,
so that $\Fr^{\Delta}(\R \bar C)$ does not have self-intersections.
Therefore for each toric divisor
$\R E$ the order of intersection points on $\R E$ and that on the component
$K\subset\R \tilde C$ agree.

Let us look at the compact components of $\R C^\circ$. Their number and distribution
among the quadrants of $\rtor$ is determined by the lattice points of $\Sigma(\R C)$
and thus by $\Delta$. Furthermore,
Corollary \ref{coro-number} implies that in
each quadrant of $\rtor$ all ovals
and solitary real nodes of $\R C^\circ$ have the same orientation.
The complex orientation formula \cite{Rokhlin} ensures that these components
cannot be nested among themselves and that they are arranged with respect to $K$
so that their complex orientation is coherent with the complex orientation of $K$.
\end{proof}

\begin{rem}
The proof of Corollary \ref{coro-number}
is applicable also for pseudoholomorphic,
and even the so-called {\em flexible} (see \cite{Vi-fle}) real curves
of toric type I. Thus Corollary \ref{fle} may be considered
as a further generalization of the topological uniqueness theorem for simple Harnack curves
\cite{Mi00} from its version for pseudoholomorphic curves \cite{Br}
recently found by Erwan Brugall\'e.
\end{rem}

\begin{exa}
\begin{figure}[h]
\includegraphics[height=45mm]{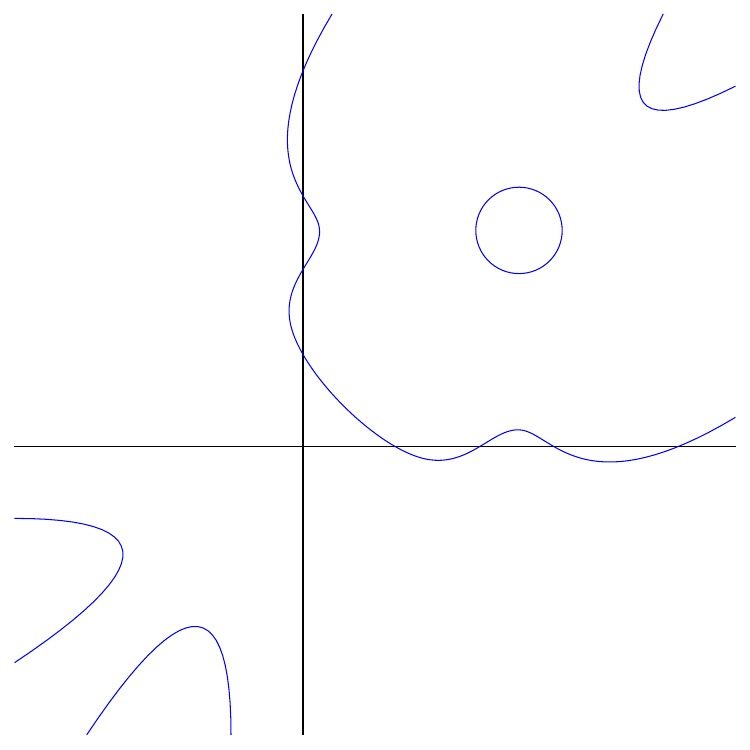}\hspace{5mm}
\caption{A quartic curve of type I, but not of toric type I.
\label{nHa}}
\end{figure}
The curve sketched on Figure \ref{nHa} is isotopic to a smooth real quartic curve of type I.
Namely, it can be obtained as a perturbation of a union of 4 lines.
However, it is not an $M$-curve while its diagram
coincides with the 
diagram of the simple Harnack curve of the same degree (i.e.
the triangle with vertices $(0,0),(0,-4),(-4,-4)$ for one of the orientations).
By Corollary \ref{fle} this curve is not of toric type I.
In other words there is a cycle in $\C C^\circ \setminus\R C^\circ$
that is homologically non-trivial in $H_1(\ctor)$.
Also we can deduce this
from the toric complex orientation formula \eqref{co-Q}.
\end{exa}

\subsection{The real index map vs. the amoeba index map}
To advance the viewpoint of the index diagram $\Sigma$ 
as a non-commutative version of the Newton polygon $\Delta$
it is interesting to compare
the real index map \eqref{indR} 
for toric type I curves and the
{\em amoeba index map} 
\begin{equation}\label{indA}
\ind:\R^2\setminus\Log(\C C^\circ)\to\Delta\cap\Z^2
\end{equation} 
of Forsberg-Passare-Tsikh
\cite{FoPaTs}.
The map \eqref{indA} is locally constant
and thus it indexes the components $K$
of the amoeba complement $\R^2\setminus\Log(\C C^\circ)$
by lattice points of $\Delta$.

One obvious distinction between $\ind$ and $\alpha$ is that
they take values in dual spaces: the Newton polygon
$\Delta$ belongs to the dual vector space to $\R^2=\Log\ctor$.
But thanks to the symplectic form $\omega((a,b),(c,d))=
ad-bc$, $a,b,c,d\in\R$ we have a preferred isomorphism
between these spaces.
Denote with $(a,b)^*=(b,-a)$ the corresponding identification.

As usual, we fix a complex orientation on $\R\tilde C$
and consider the corresponding component  
$S^\circ\subset\C\tilde C^\circ\setminus\R\tilde C^\circ$.
Let $$p,p'\in\R^2\setminus\Log(\C C^\circ)$$
and $\gamma\subset\R^2$ be a smooth path 
between $p$ and $p'$.
We assume $\gamma$ to be in general position 
with respect to $\R C^\circ$. 
The intersection number 
$\#(\gamma,\Log\R C)\in\Z$ is well-defined as
$\Log(\R C)\subset\R^2$ is proper.

\begin{prop}
We have $\#(\gamma,\Log\R C)=0.$ 
\end{prop}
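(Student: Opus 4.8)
The plan is to realize $\Log(\R C^\circ)$ as (the relevant part of) the boundary of a singular $2$-chain whose support lies inside the amoeba $\Log(\C C^\circ)$, so that a bounded path with endpoints off the amoeba must meet it with total multiplicity zero.

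Fix the complex orientation, let $S\subset\C\tilde C\setminus\R\tilde C$ be the corresponding component, and let $\bar S=S\cup\R\tilde C$ be the compact oriented surface with $\dd\bar S=\R\tilde C$. Remove from $\bar S$ the finitely many points of $\nu^{-1}(\C\bar C\cap\dd\C\Delta)$ lying on it, obtaining an oriented surface $\bar S^\circ=S^\circ\cup\R\tilde C^\circ$ with boundary $\R\tilde C^\circ$ and finitely many ends. Set $\Phi:=\Log\circ\nu|_{\bar S^\circ}\colon\bar S^\circ\to\R^2$. First I would record two facts: (i) $\Phi(\bar S^\circ)\subset\Log(\C C^\circ)$, since $\nu(\bar S^\circ)\subset\C C^\circ$; and (ii) $\Phi$ is proper, because along any end of $\bar S^\circ$ the image point $\nu$ approaches $\dd\C\Delta$, hence $\Log\nu\to\infty$, so preimages of compact sets are compact.

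Now choose a generic radius $R$ large enough that $\gamma\cup\{p,p'\}\subset B_R(0)$ and $\dd B_R$ is transverse to $\Phi$ and to $\dd\bar S^\circ$, and put $N_R:=\Phi^{-1}(\overline{B_R})$. By properness $N_R$ is a compact oriented surface with corners, and $\dd N_R=\big(\R\tilde C^\circ\cap N_R\big)\cup\Gamma_R$ with $\Gamma_R:=\Phi^{-1}(\dd B_R)$. Since $p,p'\notin\Log(\C C^\circ)\supseteq\Phi(N_R)$, the map $x\mapsto(\Phi(x)-q)/|\Phi(x)-q|$ is defined on all of $N_R$ for $q\in\{p,p'\}$, so the degree function $\deg_\Phi\colon\R^2\setminus\Phi(\dd N_R)\to\Z$ of $\Phi|_{N_R}$ vanishes at $p$ and at $p'$; equivalently $\#(\gamma,\Phi_*[\dd N_R])=\deg_\Phi(p')-\deg_\Phi(p)=0$. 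On the other hand $\Phi(\Gamma_R)\subset\dd B_R$ is disjoint from the bounded path $\gamma\subset B_R$, so $\#(\gamma,\Phi_*[\Gamma_R])=0$, whence $\#(\gamma,\Phi_*[\R\tilde C^\circ\cap N_R])=0$. Finally, for $R$ large the compact piece $\R\tilde C^\circ\cap N_R$ contains every intersection point of $\gamma$ with $\Log(\R C^\circ)$, and $\nu$ restricts to a map $\R\tilde C^\circ\to\R C^\circ$ injective away from the finitely many singular points of $\R C^\circ$ (which a path in general position avoids); hence $\#(\gamma,\Phi_*[\R\tilde C^\circ\cap N_R])=\#(\gamma,\Log\R C)$, and the proposition follows.

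The routine parts are the transversality and genericity choices and the standard identity ``intersection with the boundary of a compact oriented $2$-chain equals the jump of its degree function''. The one place that genuinely uses the hypothesis is that $p$ and $p'$ lie outside the \emph{whole} amoeba $\Log(\C C^\circ)$, not merely outside $\Log(\R C^\circ)$ --- this is exactly what forces $\deg_\Phi$ to vanish there --- together with the observation that properness of $\Phi$ lets us discard the part $\Gamma_R$ of the truncated boundary that has escaped to infinity. I expect the only real care to be in matching orientations and multiplicities between the pushed-forward chain $\Phi_*[\R\tilde C^\circ\cap N_R]$ and the immersed curve $\Log\R C$, but since every quantity in sight vanishes the precise signs are immaterial for the conclusion.
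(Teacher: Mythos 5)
Your argument is correct and is essentially the paper's proof: both realize $\Log(\R C^\circ)$ as the boundary image of the chosen half $S^\circ$ and note that the local degree of $\Log|_{S^\circ}$ jumps exactly at crossings of $\gamma$ with $\Log(\R C^\circ)$ while vanishing at $p,p'$ since they lie outside the amoeba. Your truncation at a large ball merely makes explicit the properness of $\Log|_{S^\circ}$ that the paper's two-line proof uses implicitly.
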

\begin{proof}
The local degree of the map $\Log|_{S^\circ}:S^\circ\to\R^2$
changes along $\gamma$ according to the
intersection with $\R C^\circ$.
Since the local degree at the endpoints of $\gamma$
vanishes we have $\#(\gamma,\Log\R C)=0.$
\end{proof}

Using the real index map \eqref{indR} 
we may refine the intersection number 
$\#(\gamma,\Log\R C)=0$ to
\begin{equation}
\label{rig}
\#_\alpha(\gamma,\Log\R C^\circ)=\sum\limits_
{q\in\Log^{-1}(\gamma)\cap\R C^\circ}
\#_q(\gamma,\Log\R C^\circ)\alpha(q)\in\Z^2
\end{equation}
Here $\#_q(\gamma,\Log\R C^\circ)=\pm 1$
is the local intersection
number between $\gamma$ and $\Log \R C^\circ$
and $\alpha(q)\in\Z^2$
is the real index of the component of $\R\tilde C^\circ$
containing the point $q$. 

\begin{thm}\label{thm-indAR}
Let $\R C^\circ\subset\rtor$ be an algebraic
curve of toric type I. For any two points $p,p'\in
\R^2\setminus S^\circ\cup\R C^\circ$ and a generic
smooth path $\gamma\subset\R^2$ connecting $p$ and $p'$
we have
\begin{equation}\label{M-class}
(\ind(p')-\ind(p))^*=\#_\alpha(\gamma,\Log\R C^\circ).
\end{equation}
\end{thm}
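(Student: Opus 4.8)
The plan is to express both sides of \eqref{M-class} as measurements of how a single harmonic (in fact the imaginary part of a holomorphic) function behaves as we cross $\Log\R C^\circ$ along $\gamma$. Recall that on $\tilde S = (S\setminus\dd\C\Delta)\cup\R\tilde C^\circ$ we have the map $\tilde l$ of \eqref{liftS} with $e^{\pi\tilde l}$ the tautological map to $\ctor$ and $\Im\tilde l(K^\circ)=v(K^\circ)=\alpha(K^\circ)$ for each real arc. The real part $\re\tilde l$ is, up to the factor $\pi$, exactly $\Log$ composed with the tautological map; so $\Log|_{S^\circ}$ and $\pi^{-1}\re\tilde l$ agree on $S^\circ$. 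The Forsberg--Passare--Tsikh index $\ind(p)\in\Delta\cap\Z^2$ is characterized (by their theorem, which I would cite) as the order of a Ronkin-type integral, but for our purposes the clean input is the following: the difference $\ind(p')-\ind(p)$ equals the change in the "gradient at infinity direction'' detected by how the sheets of $\Log|_{\C C^\circ}$ re-index, and this admits a linking-number description analogous to Theorem \ref{thm-index}. So the first step is to set up the dictionary carefully: identify $\ind$ on $\R^2\setminus\Log(\C C^\circ)$ with the local degree data of $\Log|_{S^\circ}$ relative to the complex orientation, using that the two components $S^\circ$ and $\conj(S^\circ)$ of $\C C^\circ\setminus\R C^\circ$ carry opposite orientations (as in Proposition \ref{2kdeg}).

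Next I would reduce to an infinitesimal statement. Both sides of \eqref{M-class} are additive under concatenation of paths and vanish when $\gamma$ avoids $\Log\R C^\circ$ (the left side because $\ind$ is locally constant on $\R^2\setminus\Log(\C C^\circ)\supset\R^2\setminus(\Log(\C C^\circ))$ and the relevant crossings are with $\Log\R C^\circ$; the right side trivially). Hence it suffices to treat a $\gamma$ crossing $\Log\R C^\circ$ transversally once, at the image of a single point $q\in\R C^\circ$, with $\#_q(\gamma,\Log\R C^\circ)=+1$. At such a crossing the real index $\alpha(q)=v(K^\circ)$ is that of the arc $K^\circ$ of $\R\tilde C^\circ$ through $q$. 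I must show $\ind(p')-\ind(p)=(\alpha(q))^*$, i.e. $=(b,-a)$ if $\alpha(q)=(a,b)$. The key geometric fact is that near $q$ the local branch of $\Log|_{\C C^\circ}$ is modeled (after a monomial change of coordinates in $\ctor$) on the standard picture of a curve meeting $\R C^\circ$, where the amoeba locally separates two complement components whose FPT indices differ exactly by the primitive tangent lattice vector of $\Sigma$ at $v(K^\circ)$ — which, rotated by the symplectic identification $(a,b)\mapsto(b,-a)$, is recorded by $\alpha$. I would make this precise by invoking the local model from the proof of Proposition \ref{IDexists}: near an arc $K^\circ$ the lift $\tilde l$ has a specified imaginary part, and crossing $\Log\R C^\circ$ at $q$ corresponds to passing from one side of the fold of $\Log|_{S^\circ}$ to the other, changing the sheet-count by the direction normal (in the $\omega$-dual sense) to $\alpha(q)$.

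The main obstacle, I expect, is matching the normalizations and signs between three conventions simultaneously: (i) the FPT index, which is defined via an integral over a torus and lives in $\Delta\subset(\R^2)^\vee$; (ii) the real index $\alpha$, defined via $\Im\tilde l$ and living in $\bar\Sigma\subset\R^2$; and (iii) the symplectic identification $(a,b)^*=(b,-a)$. Concretely I will need to check that the orientation of $\gamma$, the complex orientation of $S^\circ$ (versus $\conj(S^\circ)$), the counterclockwise convention on the $(\pm,\pm)$-quadrants fixed in Section 2, and the "outer normal'' convention for $\vec n(E)$ all conspire to give $(\alpha(q))^*$ and not its negative. A safe way to pin this down is to verify the formula on the explicit examples already in the paper — e.g. a real line in $\rp^2$ (Figure \ref{p}, $k=\pm\tfrac12$), where $\Log(\C C^\circ)$ is a classical amoeba with three complement components indexed by the vertices of the triangle $\Delta$, and one can read off $\ind(p')-\ind(p)$ directly and compare with $\alpha(q)^*$ for the unique arc crossed. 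Once the sign is fixed in that base case, the general case follows from the local model plus additivity, as above. I would also remark (as the paper does for Corollary \ref{coro-number}) that the argument is purely topological in the lift $\tilde l$, hence extends to flexible curves of toric type I.
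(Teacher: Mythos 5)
Your overall strategy (reduce to crossings of $\gamma$ with $\Log\R C^\circ$ and use the lift $\tilde l$ to read off the jump) is in the right spirit, but the reduction step contains the essential gap. You claim that both sides of \eqref{M-class} vanish on any sub-path avoiding $\Log\R C^\circ$, "because $\ind$ is locally constant on $\R^2\setminus\Log(\C C^\circ)$ and the relevant crossings are with $\Log\R C^\circ$." That is circular: a generic $\gamma$ will in general cross the amoeba $\Log(\C C^\circ)$ at points covered only by non-real points of the curve, and at such crossings the Forsberg--Passare--Tsikh index can a priori jump (for a curve that is not of toric type I it genuinely does). The statement that these crossings contribute nothing is exactly what the toric type I hypothesis must be used for, and your proposal never invokes it at this point -- you only use toric type I through the existence of $\tilde l$ and $\alpha$. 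The paper's proof closes precisely this hole by a global homological argument: it takes $M=\Log^{-1}(\gamma)\cap\C C^\circ$, a disjoint union of circles; every component disjoint from $\R C^\circ$ lies in $\C\tilde C^\circ\setminus\R\tilde C^\circ$ and is null-homologous in $H_1(\ctor)$ by the very definition \eqref{homS} of toric type I, while a component meeting $\R C^\circ$ consists of two conjugate arcs whose class is $\alpha(q')-\alpha(q)$ by \eqref{indR}; summing gives $[M]=\#_\alpha(\gamma,\Log\R C^\circ)$ (under the identification $*$). The left-hand side is then matched with $[M]$ not by a local-degree dictionary but by the linking-number interpretation of $\ind(p)$ as a functional on $H_1(\Log^{-1}(p))$ from \cite{Mi00}, pairing a loop $N$ with a membrane $P\subset\Log^{-1}(\gamma)$.

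A second, related problem is your local model at a real crossing: you assume the crossing point of $\Log\R C^\circ$ separates two components of the amoeba complement whose indices can be compared. In general $\Log(\R C^\circ)$ need not lie on $\dd\am$; it can sit in the interior of the amoeba, in which case there are no complement components near the crossing, the quantity "$\ind(p')-\ind(p)$ for that single crossing" is not even defined, and the one-crossing-at-a-time telescoping you propose cannot be set up (the intermediate endpoints must lie in the amoeba complement). This is another reason the argument has to be global, as in the paper, rather than a concatenation of local jumps. Finally, fixing the sign "by checking the formula on a real line" is a sanity check, not a proof; in the paper the sign and the appearance of the symplectic identification $(a,b)^*=(b,-a)$ come out automatically from the intersection pairing on $H_1(S^1\times S^1)$, which is where I would encourage you to look for a clean derivation.
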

\begin{proof}
Consider the 1-dimensional submanifold 
$$M=(\Log|_{\C C^\circ})^{-1}(\gamma)\subset \C C^\circ.$$
Its orientation is induced by that of $\gamma$
through the pull-back map
with the help of the orientations of the ambient
spaces: the standard orientation $\R^2\supset\gamma$
and the complex orientation of $\C C^\circ\supset M$.
Since $\gamma$ is chosen generically,
the 1-submanifold $M$ is smooth.

Any component of $M$ disjoint from $\R C^\circ$
is null-homologous in $\ctor$ as $\R C^\circ$
is a toric type I curve.
A component $L\subset M$ intersecting $\R C^\circ$ consists
of two arcs interchanged by $\conj$.
Let $q\in\R C^\circ$ be the source and
$q'\in\R C^\circ$ be the target of the 
arc $\delta=L\cap S^\circ$ with the orientation
induced from $M$.
We have $$\Im\tilde l(q')-\Im\tilde l(q)
=\alpha(q')-\alpha(q)$$ by \eqref{indR}
and therefore $[L]=\alpha(q')-\alpha(q)\in\Z^2=H_1(\ctor)$
so that $[M]\in H_1(\ctor)$ is given by the right-hand side
of \eqref{M-class}.

By \cite{Mi00} we may interpret
$\ind(p)$ as the linear functional on
$$H_1(\Log^{-1}(p))=\Z^2$$
that associates to each oriented
loop $N\subset\Log^{-1}(p)$ the linking number of
$N$ and the closure of the surface $\C C^\circ$ in $\C^2$.
Suppose that  $N'\subset\Log^{-1}(p')$
is a loop homologous to $N$
in $\ctor$ so that $N'-N=\dd P$ for a surface $P\subset\ctor$.
Then the difference of the linking
numbers of $N'$ and $N$ coincides with the intersection
number of $P$ and $\C C^\circ$. Choosing the membrane
$P$ to be contained in $\Log^{-1}(\gamma)$ we identify
the difference with the intersection number of $[N]$ and
$[M]$ in
$$\Z^2=H_1(\Log^{-1}(p))=H_1(\ctor)=H_1(S^1\times S^1).$$
\end{proof}

\section{Refined real enumerative geometry}
\subsection{Invariance of real refined enumeration}
\label{me51}
Let $\Delta\subset\R^2$ be a lattice polygon
with non-empty interior.
Let $E_j\subset\dd\Delta$, $j=1,\dots,n$, be its sides
of integer length $m_j=\#(E_j\cap\Z^2)-1$
enumerated counterclockwise. 
We denote with $v_j\in\Delta$ the vertices
of $\Delta$ enumerated so that $E_j$ connects
$v_{j-1}$ and $v_j$ (using the convention
$v_0=v_n$).
Let $\C E_j$
be the corresponding toric divisors.
Let $\PP=\{p_l\}_{l=1}^m$
be a collection
of $m=\#(\dd\Delta\cap\Z^2)$
points
on $\dd\C\Delta$. 
We do not assume the points $p_l$
to be distinct, but assume that 
exactly $m_j$ of these points
are contained in the toric divisor $\C E_j$
(in particular, we have
$\PP\cap\bigcup\limits_{j=1}^n \{v_j\}=\emptyset$).

The primitive vector
$(a_j,b_j)\in\Z^2$ parallel to $E_j$
and coherent with the counter-clockwise
orientation of $\dd\Delta$
defines the multiplicative-linear
(monomial) map
$\pi_j:\ctor\to\C^\times$
by $\pi_j(z,w)=z^{a_j}w^{b_j}$.
This map extends
to a continuous map
$\bar\pi_j:\ctor\cup(\C E_j\setminus\{v_{j-1},v_j\})$).
Define the map 
$$\rho:\bigcup\limits_{j=1}^n\C E_j\setminus\{0_j,\infty_j\}\to\C^\times$$
by $\rho(p)=\bar\pi_j(p)$ for $p\in\C E_j$.
\begin{rmk}
Note that the two coordinates $z$ and
$w$ in $\ctor$
give two meromorphic functions on
the Riemann surface $\C\tilde C$
obtained through normalization
$\bar\nu:\C\tilde C\to\C\bar C$.
{\em Le symbole mod\'er\'e} 
(defined by J. Tate according to
\cite{Deligne-SM})
\begin{equation}\label{symmod}
(z,w)_{\tilde p}=(-1)^{v(z)v(w)}
[w^{v(z)}z^{-v(w)}](\tilde p)
\in\C^\times
\end{equation}
for a point $\tilde p\in\C \tilde C$
is determined up to a sign
by its image
$p=\bar\nu(\tilde p)\in\C\Delta$.
The notation $v(z)$ refers to
the valuation of $z$ at $p$,
i.e. to the order of zero (or pole with
the negative sign) of this meromorphic 
function. The square brackets
$[w^{v(z)}z^{-v(w)}](\tilde p)$ refer
to taking the limit of the function
$w^{v(z)}z^{-v(w)}$ when the argument tends
to $\tilde p$. 
Note that the result 
is always a nonzero complex number.
Equivalently, we can write
$(z,w)_{\tilde p}=1$ if
$p\in\ctor$, and
\begin{equation}\label{sm-rho}
(z,w)_{\tilde p}=(-1)^{v(z)v(w)}\rho^{-a}(p)
\end{equation}
if $p\in\dd\C\Delta$, where $a$ is the order
of tangency of $\C\tilde C$ to $\dd\C\Delta$
at $\tilde p$.
\end{rmk}

\begin{condition}[Menelaus condition on $\PP$]
\begin{equation}\label{meneco}
\prod\limits_{l=1}^m\rho(p_l)=(-1)^m.
\end{equation}
\end{condition}
The following proposition is known as the {\em Menelaus theorem}
in the case of $\R\Delta=\rp^2$ and $m=3$ (lines),
and generalized by Carnot \cite{Ca} to higher
degree curves.
It is also known as the {\em Weil reciprocity law}, see e.g. \cite{GrHa}.
\begin{prop}[cf. (1.2) of \cite{Deligne-SM}]
\label{WRL}
There exists a curve
$\C\bar C\subset\C\Delta$
such that
$\C\bar C\cap\dd\C\Delta=\PP$
(in the sense that each point 
$p\in\C\bar C\cap\dd\C\Delta$ is included in $\PP$
the number of times equal to the local intersection
number of $\C\bar C$ and $\dd\C\Delta$ at $p$)
if and only if \eqref{meneco} holds.
\end{prop}
\begin{proof}
The torus part
$\C C^\circ=\C\bar C\cap\ctor$ is defined by
a polynomial
$f(z,w)=
\sum a_{(\iota_1,\iota_2)} z^{\iota_1}w^{\iota_2}$.
Note that the condition
$\C\bar C\cap \C E_j=\PP\cap\C E_j$
implies that
the Newton polygon
of $f$ coincides with $\Delta$
(up to translation in $\R^2$).
Furthermore, the intersection $\C\bar C\cap \C E_j$
is determined by $a_{(\iota_1,\iota_2)}$ with
$(\iota_1,\iota_2)\in E_j$.

Suppose that \eqref{meneco} holds.
The set $\pi_j^{-1}(\PP\cap\C E_j)$ is
the zero locus a polynomial $f_j$ whose Newton
polygon $\Delta_{f_j}$ is a translate of the side $E_j$.
Multiplying $f_j$ by an appropriate monomial
we ensure that $\Delta_{f_j}=E_j$,
$f_j=\sum a^{(j)}_{(\iota_1,\iota_2)}
z^{\iota_1}w^{\iota_2}$.
We have
$$a^{(j)}_{v_{j-1}}/a^{(j)}_{v_j}=
(-1)^{m_j}\prod\limits_{p_l\in\C E_j}
\rho(p_l)$$
by the Vieta theorem.
Therefore we can choose $f_j$ in such
a way that $a^{(j)}_{v_j}=a^{(j+1)}_{v_j}$
(using the convention
$a^{(n+1)}_{v_n}=a^{(0)}_{v_n}$)
if and only if
\eqref{meneco} holds.

Vice versa, if a curve with
$\C\bar C\cap\dd\C\Delta=\PP$
exists then it is given by a polynomial
with the Newton polygon $\Delta$.
Applying the Vieta theorem to the coefficients
corresponding to $\dd\Delta$ we recover 
the condition \eqref{meneco}.
\end{proof}

In other words, \eqref{meneco} means that
the linear system defined by the divisor $\PP$
on the (singular) elliptic curve $\dd\C\Delta$ is $\OO(\Delta)$,
i.e. any curve with the Newton polygon $\Delta$
passing through the points $\{p_j\}_{j=1}^{m-1}$ also passes through $p_m$.
By Proposition \ref{WRL} for any subset of $m-1$ points on $\PP$ there exists a unique remaining
point with this condition.
We say that $\PP$ is
a {\em a generic Menelaus configuration
of $m$ points on $\dd\R\Delta$}
if the first $m-1$
points of $\PP$ are chosen generically on $\dd\R\Delta$.
We make this genericity assumption to formulate
Theorem \ref{Rinv}.


\ignore{
Assume that the configuration $\PP$ is invariant under the
involution of complex conjugation in $\C \Delta$.
In addition, assume that
we have $\Fr^{\Delta}(\PP)\subset\R \Delta$.
}

An {\em oriented real rational curve}
$\R \bar C\subset\C \bar C$
is a real curve
whose normalization is isomorphic to $\rp^1\subset\cp^1$
as well as a choice of orientation on this
$\rp^1\approx S^1$.
Note that by Jordan's theorem such curve must be
of type I.
The configuration $(\Fr^\Delta)^{-1}(\PP)$ consists of 
real or purely imaginary
points.
Thus an oriented real rational curve $\R \bar C\subset\R \Delta$ with the
Newton polygon $\Delta$
such that $\Fr^{\Delta}(\C C)$ passes through $\PP$
has the quantum index $k(\R C)\in\frac12\Z$.

Define
\begin{equation}\label{sigma}
\sigma(\R C)=(-1)^{\frac{m-\lrot(\R C)}2}.
\end{equation}
Since the parities of $\lrot(\R C)$ and $m$ coincides we have $\sigma(\R C)=\pm 1$.
\begin{rmk}
Note that if $\R C^\circ$ is nodal then its toric
solitary singularities number $E(\R C)$
has the same parity as the number of solitary nodes of $\R C^\circ$.
Thus the Welschinger sign $w(\R C)$ (see \cite{We}) coincides with $(-1)^{E(\R C)}$.
Since the curve $\R \bar C$ intersects the union $\dd\R \Delta$ of toric divisors
in $m$ distinct points and $\frac m2\equiv \Area(\Delta)\pmod 1$ by Pick's formula,
we have
\begin{equation}\label{sigma-w}
\sigma(\R C)=(-1)^{Area(\Delta)-k(\R C)}w(\R C)
\end{equation}
by Theorem \ref{thm-lrot}.
\end{rmk}

\ignore{
Any real rational curve is a curve of type I. To associate a quantum
index to $\R C$ we need to choose its orientation. We may do so
with the help of the point $p_m$. Namely if $p_m$ is
adjacent to the quadrant $\R_{>0}^2$ then we orient $\R C$ from
$\R_{>0}^2$ to another quadrant at $p_m$.
If $p_m$ is not adjacent to the quadrant $\R_{>0}^2$, but is adjacent
to the quadrant $\R_{<0}^2$ then we orient $\R C$ from
$\R_{<0}^2$ to another quadrant at $p_m$.
}

We define
\begin{equation}\label{Rk}
R_{\Delta,k}(\PP)=\frac 14\sum\limits_{\R \bar C}\sigma(\R C),
\end{equation}
where the sum is taken over all oriented real rational
curves $\R \bar C$ (in particular, irreducible over $\C$)
with the Newton polygon $\Delta$ and $k(\R \bar C)=k$
such that $\Fr^\Delta(\C \bar C)\supset\PP$.
We have the coefficient $\frac 14$ in the right-hand side of $\eqref{Rk}$
as the group of the deck transformations of $\Fr^\Delta:\C\Delta\to\C\Delta$
is $\Z_2^2$, so each curve $\R \bar C$ comes in four copies with the same
image $\Fr^\Delta(\C \bar C)$.
(Alternatively, we can take a sum over different oriented images
$\Fr^\Delta(\R \bar C)$
without the coefficient $\frac 14$.)
Each real rational curve gives rise to two oriented
real rational curves $\R \bar C$
(one for each orientation),
and thus enters the sum \eqref{Rk} twice.

Recall that we call a point in $\dd\R\Delta$ {\em positive} if it is adjacent to
the quadrant $(\R_{>0})^2$ and {\em negative} otherwise.
Note that $(\Fr^{\Delta})^{-1}(p)$ consists of real points if $p$ is positive
and of purely imaginary points if it is negative.
Let $\lambda_j$ be the number of negative points in $\PP\cap\R E_j$.
Denote $\lambda=(\lambda_j)_{j=1}^n$.
\begin{thm}\label{Rinv}
The number $R_{\Delta,k}(\lambda)=R_{\Delta,k}(\PP)$
depends only on $\Delta$, $k$ and $\lambda$.
\end{thm}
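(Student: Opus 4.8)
**The plan is to establish invariance by a standard ``wall-crossing'' argument in the $(m-1)$-dimensional space of Menelaus configurations on $\dd\R\Delta$.** First I would set up the parameter space: a generic $\PP$ subject to the Menelaus condition is determined by the first $m-1$ of its points, which vary in an open dense subset of $(\dd\R\Delta)^{m-1}$ (with the prescribed distribution $m_j$ points on each $\R E_j$). For $\PP$ in the complement of a codimension-$1$ ``wall'' $W$, the count $R_{\Delta,k}(\PP)$ is locally constant, because the finitely many oriented real rational curves $\R\bar C$ with Newton polygon $\Delta$, quantum index $k$, and $\Fr^\Delta(\R\bar C)\supset\PP$ persist and deform smoothly, keeping their quantum index (an integer-valued, hence locally constant, invariant by Theorem \ref{thm-larea} / Proposition \ref{2kdeg}), their rotation number $\lrot$, and hence their sign $\sigma(\R C)$. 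So the content is to show that $R_{\Delta,k}(\lambda)$ does not change as $\PP$ crosses a wall, and that $\lambda$ cannot change on a path of generic configurations (the latter is immediate: moving a point $p_j$ inside a fixed toric divisor $\R E_j$ keeps it on the same side of the positive quadrant unless it hits a vertex, which is a non-generic event, so $\lambda$ is constant on each connected component and one checks it is in fact globally constant since $\dd\R\Delta$-walls separating $\lambda$-types lie in $W$).

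**Next I would classify the walls.** A wall is a configuration $\PP$ admitting a curve $\R\bar C$ (in the relevant count, i.e. with $\Fr^\Delta(\R\bar C)\supset\PP$, which unwinds to: $\Fr^{\Delta,-1}(\PP)$ lies on $\R\bar C$) that is non-generic in the enumerative sense: either $\R C^\circ$ acquires an extra real node (a tacnode or cusp degeneration, so two solitary nodes or a conjugate pair of nodes collide), or $\R\bar C$ fails to be transversal to $\dd\R\Delta$ at one of the points of $\Fr^{\Delta,-1}(\PP)$ (two intersection points on a toric divisor collide into a tangency), or $\R\bar C$ degenerates to a reducible curve. In each local model the standard argument is: near the wall the solutions to the enumerative problem are parametrized by a real algebraic curve in a disk, and the ``appearing/disappearing'' pairs of solutions come with opposite signs $\sigma$, so their total contribution to $R_{\Delta,k}$ cancels; one only needs to check that across the wall the surviving solutions keep the same $k$ and that the $\frac14$ normalization (the $\Z_2^2$ deck group of $\Fr^\Delta$) is respected uniformly, which it is since the deck group acts freely on the whole picture.

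**The main obstacle — and the step that requires genuine work — is the wall where $\R\bar C$ becomes tangent to a toric divisor**, because there the hypothesis of Theorem \ref{thm-lrot} fails and one must control how $k(\R C)=\Area\Sigma(\R C)$ and $\lrot(\R C)$ jump together. Here I would use Theorem \ref{thm-index}: the index diagram $\Sigma(\R C)$ is a combinatorial object built from the normal vectors $\nuu(E)$ weighted by local intersection multiplicities, and a transversal-to-tangent degeneration replaces two consecutive unit edges $\nuu(E)+\nuu(E)$ by a single edge $2\nuu(E)$ while possibly merging two vertices of $\Sigma$. One checks that $\Area\Sigma$ is unchanged by this operation provided the orientation data matches, so the colliding solutions on the two sides of the wall have the same $k$; meanwhile the ``missing'' real node or the opposite-orientation partner accounts for the sign cancellation. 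The crucial point is that the Menelaus condition on $\PP$ (equivalently, that the divisor $\PP$ cuts out $\OO(\Delta)$ on the cycle $\dd\C\Delta$) is preserved under the degeneration and forces the reducible/tangent limits to be of a controlled combinatorial type — this is exactly where one invokes the discussion after Proposition \ref{tropMenelaus}.

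**In summary**, the structure of the proof is: (i) define the space of generic Menelaus configurations and note local constancy of $R_{\Delta,k}(\PP)$ away from walls; (ii) show $\lambda$ is locally constant and that the space, minus walls, is connected up to $\lambda$-type; (iii) enumerate wall types (extra node, tangency with $\dd\R\Delta$, reducibility) using that $\Fr^\Delta$ is a free $\Z_2^2$-cover and the Menelaus condition is closed; (iv) in each local model, pair up the appearing/disappearing oriented curves and show their $\sigma$-signs cancel while $k$ is constant across the wall, using Theorem \ref{thm-index} and the complex orientation formula to track $\Area\Sigma$ through the degeneration. The tangency wall is where the argument is most delicate and where the index-diagram description of $k$ does the essential bookkeeping.
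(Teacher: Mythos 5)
Your overall wall-crossing framework is reasonable, but you have misidentified where the real difficulty lies, and the mechanism you propose for the crucial wall is not the one that actually works. First, the tangency wall that you single out as ``the main obstacle'' does not occur in this enumerative problem: the curves being counted have Newton polygon $\Delta$, so their total intersection multiplicity with each toric divisor $\C E_j$ is exactly $m_j$, and all of these intersection points are already pinned at (preimages of) the $m_j$ distinct points of $\PP$ on that divisor. A tangency at one of the constraint points would force total multiplicity strictly greater than $m_j$, which is impossible; so no such wall exists, and no index-diagram bookkeeping of edge collisions is needed. Second, and more seriously, the wall that genuinely requires work is the reducible one, which you dispatch with the generic claim that ``appearing/disappearing pairs of solutions come with opposite signs $\sigma$, so cancel.'' That is not what happens there. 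In the paper's argument, at a configuration $\PP_{t_0}$ admitting a reducible limit $\R\bar D=\R\bar D_1\cup\R\bar D_2$ (the Menelaus condition forces the two moving constraint points onto the two different components), the nearby solutions on either side of the wall are obtained by smoothing $\R\bar D$ at the various nodes $q\in I=\Fr^\Delta(\R\bar D_1)\cap\Fr^\Delta(\R\bar D_2)$; which smoothing lands on which side is controlled by Lemma \ref{sign}, proved via a Kouchnirenko--Bernstein intersection count, and the quantum index of each smoothing is $\pm k(\R D_1)\pm k(\R D_2)$ depending on orientation matching. Invariance of $R_{\Delta,k}$ is then not a cancellation of opposite-sign pairs at all, but an equality of the whole distribution of quantum indices and signs on the two sides, which rests on the balance $\#(I_+)=\#(I_-)$ of positively and negatively signed nodes inside the contractible quadrant $\R_{>0}^2$. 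Your proposal contains none of this, and without it the refined (index-by-index) invariance across the reducible wall is unproved.

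Two further omissions: (i) you never address compactness, i.e.\ why solutions cannot escape along the path (Newton polygon dropping, intersections running into torus fixed points); the paper handles this by passing to stable rational curves and proving properness of the evaluation map. (ii) For the walls involving only irreducible curves (cusps, tacnodes, etc.) your ``standard local cancellation'' is plausible, but note that the constraints here lie on the toric boundary, so Welschinger's theorem does not apply verbatim; the paper reduces to it by a birational passage to $\rp^1\times\rp^1$ together with a perturbation trading the boundary conditions for generic interior point conditions, combined with the observation that $k$ is locally constant in families of irreducible curves so that $\sigma$ and the Welschinger sign differ by a locally constant factor as in \eqref{sigma-w}. Either supply such a reduction or carry out the local analysis with boundary constraints explicitly; as written, this step is also a gap, though a smaller one than the reducible wall.
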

If all points of $\PP$ are negative (i.e. $m_j=\lambda_j$)
the number $R_{\Delta,k}(\lambda)$ is the number of oriented real rational curves $\R \bar C$ of quantum index $k$ contained in the positive quadrant $\R_{>0}^2\subset\rtor$ and passing through {\em all} points of the purely imaginary configuration $(\Fr^{\Delta})^{-1}(\PP)$.
For a positive point $p\in\PP$ a curve $\R \bar C$ 
should passes through
one of the two real points in $(\Fr^{\Delta})^{-1}(p)$.

Define
\begin{equation}\label{Rtk}
\tilde R_{\Delta,k}(\PP)=
\sum\limits_{\R \bar C}\sigma(\R C)
\end{equation}
where the sum is taken over all oriented real rational curves $\R\bar C$ of quantum index $k$
with the Newton
polygon $\Delta$ and $k(\R C)=k$ passing through $\PP$.
Let
\begin{equation}\label{Delta-d}
\Delta_d=\ConvexHull\{(0,0),(d,0),(0,d)\}.
\end{equation}
We have $\C\Delta_d=\cp^2$.
\begin{exa}
The curves $\R C$ with the Newton polygon $\Delta_2$ are projective conics.
In this case $n=3$, $m_1=m_2=m_3=2$, $m=6$, and
For any generic Menelaus configuration
$\PP\subset\dd\rp^2$
we have a unique conic through $\PP$. This gives us two oriented curves through $\PP$
of opposite quantum index.

We may assume (applying the reflections in $x$ and $y$ axes if needed)
that $\PP$ contains a positive point in the $x$-axis and a positive point
in the $y$-axis. As the positivity of the last point of $\PP$ will be determined
by the Menelaus condition we have 3 possibility for the non-decreasing sequence
$\lambda=(\lambda_1,\lambda_2,\lambda_3)$.
The possible values of $k(\R C)$ are listed in
the following table, cf. Figure \ref{rconics}.
\begin{table}[hh]
\begin{tabular}{|| l | l | l || l ||}
\hline
$\lambda_1$ & $\lambda_2$ & $\lambda_3$ & $k(\R C)$\\
\hline
0 & 0 & 0 & $\pm 2$\\
\hline
0 & 0 & 2 & $\pm 1$ or 0\\
\hline
0 & 1 & 1 & $\pm 1$ or 1\\
\hline
\end{tabular}
\caption{\label{tabconics} Quantum indices for conics}
\end{table}
In particular in this case $\tilde R_{\Delta_2,k}(\PP)$ is different
for different configurations with the same $\lambda$
for two last rows of Table \ref{tabconics}.
Thus the numbers $\tilde R_{\Delta,k}(\PP)$ may vary
when we deform $\PP$.
\end{exa}

Define
\begin{equation}\label{Reven}
\tilde R_{\Delta, even}(\PP)=
\sum\limits_{k\in \frac m2+2\Z}\tilde R_{\Delta, k}(\PP);\ \ \
\tilde R_{\Delta, odd}(\PP)=\sum\limits_{k\in \frac m2+1+2\Z}\tilde R_{\Delta, k}(\PP).
\end{equation}
\begin{thm}\label{tildeR}
The numbers $\tilde R_{\Delta_d,even}(\PP)$ and $\tilde R_{\Delta_d, odd}(\PP)$
do not depend of $\PP$ as long as $d$ is even and all the points of $\PP$
are positive (i.e.  $\lambda=(0,0,0)$).
\end{thm}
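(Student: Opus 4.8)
The plan is to derive Theorem~\ref{tildeR} from the invariance of $R_{\Delta_d,k}(\lambda)$ (Theorem~\ref{Rinv}) together with the deformation invariance of the Welschinger count, the bridge being the $\Fr^\Delta$-preimage bookkeeping behind the definition \eqref{Rk} of $R_{\Delta,k}(\PP)$.

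First I would make the $\Fr^\Delta$-preimage decomposition explicit. As all points of $\PP$ are positive, each $p_j\in\PP$ has two distinct real preimages on its toric divisor, a positive one $p_j^+$ and a negative one $p_j^-$, so every $\varepsilon\in\{+,-\}^m$ gives a configuration $\PP_\varepsilon=\{p_j^{\varepsilon_j}\}_{j=1}^{m}\subset(\Fr^\Delta)^{-1}(\PP)$. A real rational curve $\R\bar C$ with Newton polygon $\Delta_d$ satisfies $\Fr^\Delta(\R\bar C)\supset\PP$ precisely when $\R\bar C\cap\dd\C\Delta_d=\PP_\varepsilon$ for some $\varepsilon$, and then $\PP_\varepsilon$ is automatically a Menelaus configuration. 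Working in $\Pic^0(\dd\C\Delta_d)$, on which $\Fr^\Delta$ acts as the squaring endomorphism, one checks that flipping a single entry of $\varepsilon$ changes the class of $\PP_\varepsilon$ by the order-two element coming from $-1$ on the relevant divisor; hence $[\PP_\varepsilon]$ depends on $\varepsilon$ only through the parity of $|\varepsilon|_-:=\#\{j:\varepsilon_j=-\}$, and since $d$ is even (so $m=3d$ is even) the $\PP_\varepsilon$ that are Menelaus are exactly those with $|\varepsilon|_-$ even. The same computation shows that $\Fr^\Delta$ carries the real Menelaus configurations onto the all-positive ones, that every real Menelaus configuration has an even number of negative points, and that over a given all-positive $\PP$ the Menelaus sign-choices form a set of size $2^{m-1}$ with a unique all-positive member. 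Since $\sigma(\R C)$ and $k(\R C)$ depend only on $\R C$, sorting curves accordingly yields
\begin{equation}\label{eq-frdecomp}
4\,R_{\Delta_d,k}(\PP)=\sum_{\substack{\varepsilon\in\{+,-\}^m\\ |\varepsilon|_-\ \text{even}}}\tilde R_{\Delta_d,k}(\PP_\varepsilon)
\end{equation}
for every all-positive Menelaus $\PP$, whose left-hand side is $\PP$-independent by Theorem~\ref{Rinv}.

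Next I would split off one of the two required combinations. For a rational curve through a configuration all of whose points are real we have $\sigma(\R C)=(-1)^{\Area(\Delta_d)-k(\R C)}w(\R C)$ by \eqref{sigma-w}; the sign $(-1)^{\Area(\Delta_d)-k(\R C)}$ is constant on each residue class of $k$ modulo $2$ and takes opposite values on the two classes, so $\tilde R_{\Delta_d,even}(\mathcal Q)-\tilde R_{\Delta_d,odd}(\mathcal Q)=\pm\sum_{\R C\ni\mathcal Q}w(\R C)$ for any real Menelaus $\mathcal Q$, with right-hand side the Welschinger count, invariant under deformations of $\mathcal Q$ that keep $\lambda$ fixed. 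Hence $\tilde R_{\Delta_d,even}(\PP)-\tilde R_{\Delta_d,odd}(\PP)$ is already independent of the all-positive $\PP$, and what remains is the invariance of $\tilde R_{\Delta_d,even}(\PP)+\tilde R_{\Delta_d,odd}(\PP)=\sum_{\R C\ni\PP}\sigma(\R C)$.

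This last point is the heart of the matter and, I expect, the main obstacle; it is also the place where $d$ even and $\lambda=(0,0,0)$ are genuinely used. The mechanism I would aim for is that all rational curves through an all-positive Menelaus configuration have quantum index in one fixed residue class modulo $2$ — for conics this is exactly the $\lambda=(0,0,0)$ row $k=\pm2$ of Table~\ref{tabconics}, and it is precisely the absence of such a parity constraint that makes $\tilde R_{\Delta_d,k}$ vary for nonzero $\lambda$. Granting it, $\sum_{\R C\ni\PP}\sigma(\R C)$ collapses to $\pm\sum_{\R C\ni\PP}w(\R C)$, once more the invariant Welschinger number, finishing the proof. To establish the parity statement I would run the wall-crossing analysis along a generic path of all-positive Menelaus configurations: at a wall a rational curve degenerates to a nodal or reducible rational curve, and one compares the quantum indices of its two real smoothings by means of $k=-\tfrac12\lrot(\R C)+E(\R C)$ (Theorem~\ref{thm-lrot}); the content is that, because the boundary data stays all-positive, the jumps of $\lrot$ and of $E$ across the wall match up so that $k$ changes by an even integer while $w=(-1)^{E}$ changes sign, exactly the coincidence needed both for the two smoothings to cancel in $\sum\sigma$ and for the two sides of \eqref{eq-frdecomp} to remain consistent for every fixed $k$. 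For $d=2$ the assertion is immediate, the unique conic through an all-positive Menelaus sextuple being smooth with $k=\pm2$.
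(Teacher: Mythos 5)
Your reduction does not close the argument, and it misses the one mechanism that actually makes the theorem work. The heart of the paper's proof is the wall-crossing analysis from Theorem \ref{Rinv}: along a generic path of all-positive Menelaus configurations the only walls are reducible rational curves $\R D=\R D_1\cup\R D_2$ meeting transversally, and the curve disappearing on one side and the curve appearing on the other side are the two smoothings of one hyperbolic node of $\R D_1\cap\R D_2$, whose quantum indices differ by exactly $2k(\R D_2)$. The point where ``$d$ even and $\lambda=(0,0,0)$'' enters is that \emph{both} components must have even degree: an odd-degree real curve must meet $\dd\rp^2$ in a negative point, since the boundary of the closed positive quadrant is null-homologous mod $2$; hence $k(\R D_2)\in\Z$, the jump $2k(\R D_2)$ is even, and both the residue class of $k$ and the sign $\sigma$ are preserved across the wall, giving invariance of $\tilde R_{\Delta_d,even}$ and $\tilde R_{\Delta_d,odd}$ separately. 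Your proposal never produces this even-degree argument; instead you assert that ``the jumps of $\lrot$ and of $E$ match up so that $k$ changes by an even integer while $w$ changes sign.'' The last clause is wrong and would in fact destroy the invariance you want: the two smoothings of a hyperbolic node have the \emph{same} Welschinger sign (no solitary node is created or removed), and precisely because $w$ is unchanged and $k$ jumps by an even integer, $\sigma=(-1)^{\Area(\Delta)-k}w$ from \eqref{sigma-w} is unchanged, which is what invariance requires --- the two smoothings must contribute \emph{equally} on the two sides of the wall, not ``cancel.''

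The two pillars you lean on are also not available as stated. First, the invariance of $\sum_{\R C\ni\PP}w(\R C)$ (equivalently of $\tilde R_{even}-\tilde R_{odd}$) for configurations of points on the toric boundary subject to the Menelaus condition is not an instance of Welschinger's theorem, which concerns generic interior constraints; in this boundary setting such invariance is exactly what has to be proved by the wall-crossing analysis (and indeed fails for the refined counts when $\lambda\neq 0$, so no general invariance principle can be cited). Second, your parity claim --- that all rational curves through a fixed all-positive Menelaus configuration have quantum index in one residue class mod $2$ --- is both unproven and unnecessary; the theorem only needs parity to be preserved under wall-crossing, not constancy over the fiber, and your proposed justification cannot deliver it, since comparing the two smoothings at a wall says nothing about two distinct curves passing through the same configuration. (The conic case is vacuous evidence: there is only one conic through a Menelaus sextuple.) Finally, the decomposition \eqref{eq-frdecomp} relating $4R_{\Delta_d,k}(\PP)$ to the sums $\tilde R_{\Delta_d,k}(\PP_\varepsilon)$ is correct in spirit but carries no weight in your argument, since the terms with mixed sign vectors $\varepsilon$ are uncontrolled. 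The fix is to drop the reduction and run the paper's wall-crossing directly, supplying the even-degree-of-components lemma above.
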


\subsection{Refined real and refined tropical enumerative geometry}
We return to the study of the invariant $R_{\Delta,k}$ from Theorem \ref{Rinv}.
\begin{defn}
The sum
\begin{equation}\label{Rd}
R_\Delta(\lambda)=\sum\limits_{k=-\Area(\Delta)}^{\Area(\Delta)}R_{\Delta,k}(\lambda)q^k
\end{equation}
is called {\em the real refined enumerative invariant} of $\rtor$ in degree $\Delta$.
\end{defn}

If $\lambda=0$ (i.e. all $\lambda_j=0$) then all points of
$(\Fr^{\Delta})^{-1}(\PP)$ are real.
In such case we use notations $R_{\Delta,k}=R_{\Delta,k}(0)$
and $R_{\Delta}=R_{\Delta}(0)$.

Recall that the Block-G\"ottsche invariant
\cite{BlGo} 
is a symmetric (with respect to the substitution $q\mapsto q^{-1}$)
Laurent polynomial with positive integer coefficients.
This polynomial is
responsible for the enumeration of the tropical curves
with the Newton polygon $\Delta$ of genus $g$, passing through
a generic collection of points in $\R^2$, see \cite{ItMi}.
The expression $\ntropm_\Delta$ defined by \eqref{BGDeltadef}
may be viewed as the counterpart of
$N^{\Delta,0}_{\operatorname{trop}}$
defined in \cite{BlGo}
for the case when the tropical curves pass
through a collection of $m$ points on the boundary of the toric tropical surface $\T\Delta$
which are generic among those satisfying
to the tropical Menelaus condition \eqref{mo0}.

\begin{thm}\label{BG}
$$R_\Delta= (q^{\frac12}-q^{-\frac12})^{m-2}\ntropm_\Delta.$$
\end{thm}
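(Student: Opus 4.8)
The plan is to establish Theorem~\ref{BG} by a tropical degeneration argument that matches the real refined count $R_\Delta$ against a weighted count of tropical curves, and then identify that weighted count with $(q^{\frac12}-q^{-\frac12})^{m-2}BG_\Delta$. First I would choose the Menelaus configuration $\PP$ to be \emph{tropically generic}: its image under the tropicalization/valuation map should be a collection of $m$ points on $\dd\T\Delta$ in general position subject to the tropical Menelaus condition \eqref{mo0}. By a Mikhalkin-type correspondence theorem adapted to the boundary-constraint situation, the rational curves $\R\bar C$ with Newton polygon $\Delta$ whose square $\Fr^\Delta(\R\bar C)$ passes through $\PP$ degenerate to rational tropical curves $\Gamma$ of genus $0$ with unbounded edges ending at the prescribed points of $\dd\T\Delta$, each such $\Gamma$ being rigid (a trivalent tree with $n$ unbounded-edge directions, the $j$th taken $m_j$ times). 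The first technical step is therefore to set up this correspondence carefully: count, for each rigid tropical tree $\Gamma$, the real curves near it, keeping track of the sign $\sigma(\R C)$ from \eqref{sigma} and of the quantum index $k(\R C)$.

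\textbf{From local contributions to the Block--Göttsche weight.} The key computational step is the local analysis at each vertex $V$ of $\Gamma$ and at each intersection of $\R\bar C$ with a toric divisor. Using Theorem~\ref{thm-index} (so $k(\R C)=\Area\Sigma(\R C)$) together with the combinatorics of the index diagram, the quantum index of a curve near $\Gamma$ decomposes as a sum of local contributions, one per trivalent vertex, each equal to $\pm\frac12(\text{lattice multiplicity of }V)$ plus a global term coming from how the $m$ boundary directions are cyclically arranged by the complex orientation. Summing $\sigma(\R C)q^{k(\R C)}$ over the (finitely many, related by the $\Z_2^2$ deck group and by the choices of orientation and of real-vs-imaginary lift at each of the $m$ marked points) real curves limiting to a fixed $\Gamma$, I expect the orientation/lift choices at the $m$ boundary points to contribute a factor $(q^{\frac12}-q^{-\frac12})$ for each of $m-2$ of the points — the ``$-2$'' accounting for the two points whose behaviour is forced (one by the Menelaus condition, one by an overall normalization, exactly as the $\frac14$ in \eqref{Rk} absorbs the deck group) — while each trivalent vertex $V$ contributes its Block--Göttsche $q$-multiplicity $[m_V]_q = \frac{q^{m_V/2}-q^{-m_V/2}}{q^{1/2}-q^{-1/2}}$. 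Multiplying these, the contribution of $\Gamma$ to $R_\Delta$ becomes $(q^{\frac12}-q^{-\frac12})^{m-2}$ times the Block--Göttsche weight $\prod_V [m_V]_q$ of $\Gamma$.

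\textbf{Assembling the identity.} Summing over all rigid tropical trees $\Gamma$ through the tropicalized configuration, the right-hand side becomes $(q^{\frac12}-q^{-\frac12})^{m-2}$ times $\sum_\Gamma \prod_V[m_V]_q$, and the latter sum is precisely $BG_\Delta$ as defined in \eqref{BGDeltadef} (the refined tropical count through $m$ generic boundary points satisfying the tropical Menelaus condition). Invariance of the left-hand side in $\PP$ is Theorem~\ref{Rinv}, and invariance of the tropical count is the standard wall-crossing argument for Block--Göttsche invariants, so it suffices to verify the identity for one convenient $\PP$; the degeneration above does exactly that. I also need to check the sign bookkeeping is consistent: the modified Welschinger sign $\sigma$ relates to $(-1)^{E(\R C)}w(\R C)$ via \eqref{sigma-w}, and under the substitution $q\mapsto -1$ the whole identity must reduce to the real (signed) count, which is a useful consistency check but not logically needed.

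\textbf{Main obstacle.} The hardest part will be the local sign-and-index computation at a trivalent tropical vertex together with the boundary marked points: one must show that, after summing over the orientation reversal and over the two preimages under $\Fr$ at each marked point, the generating polynomial in $q$ factors exactly into $[m_V]_q$'s times $(q^{\frac12}-q^{-\frac12})^{m-2}$, with no leftover sign discrepancies. This requires a precise patchworking-style model of the real curves near $\Gamma$ (in the spirit of \cite{Mi00}, \cite{ItMi}), a careful use of the complex-orientation/index-diagram formula \eqref{co-ab} to read off $k$ from the combinatorics, and control of how the Menelaus constraint removes exactly one degree of freedom among the $m$ marked points. Getting the exponent $m-2$ (rather than $m$ or $m-1$) pinned down correctly is the delicate point, and it is forced by matching against the known value $BG_\Delta(1)=N_\Delta^{\mathrm{irr}}$ of complex curves, as noted in the introduction.
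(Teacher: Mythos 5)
Your overall strategy (degenerate to the tropical limit, count real curves near each trivalent rational tropical curve through the $\mu$-points, track $\sigma$ and $k$, compare with \eqref{BGDeltadef}) is the same route the paper takes, but the step you yourself flag as the crux is where your bookkeeping goes wrong, and it is not a cosmetic issue. You attribute the factor $(q^{\frac12}-q^{-\frac12})^{m-2}$ to orientation/lift choices at $m-2$ of the boundary marked points, with the ``$-2$'' explained by the Menelaus condition and an overall normalization, while each vertex is supposed to contribute only the quantum integer $[m(v)]_q$. That is not the correct mechanism. In the paper each trivalent vertex contributes the \emph{full binomial} $q^{\frac{m(v)}2}-q^{-\frac{m(v)}2}$: the real phase $\R\Phi_v$ of a vertex is the multiplicative-linear image of a line of determinant $m(v)$ (Corollary 8.20 of \cite{Mi05}), so $k(\R\Phi_v)=\pm\frac{m(v)}2$; by Proposition \ref{prop-ksum} the quantum index of a nearby real curve is exactly $\sum_v k(\R\Phi_v)$ (there is no ``global term coming from how the boundary directions are cyclically arranged'' --- clean additivity is the point of that proposition); and the $2^m$ oriented real curves near a fixed tropical curve, after dividing by the deck group as in \eqref{Rk}, realize precisely the $2^{m-2}$ distributions of signs over the vertices, each weighted by $\sigma$ via \eqref{sigma}. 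Summing the signed monomials over these sign distributions factors as $\prod_v\bigl(q^{\frac{m(v)}2}-q^{-\frac{m(v)}2}\bigr)$, and the exponent $m-2$ then appears for the purely combinatorial reason that a rational trivalent tropical curve with $m$ leaves has $m-2$ vertices --- not because two marked points are ``forced.''

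Two further concrete problems: first, without the lift-counting input (the $2^{m-1}$ real curves near a given tropical curve, obtained in the paper by induction on the vertices using Propositions 8.7, 8.23 and Remark 8.25 of \cite{Mi05}) your local analysis cannot even start, since some choices of lifts $\tilde\PP^t$ give no real solutions and others give several; the count is uniform only after summing over both preimages under $\Fr^\Delta$ at each positive point, which is exactly why \eqref{Rk} is defined the way it is. Second, your proposed way of ``pinning down'' the exponent $m-2$ by evaluating at $q=1$ against $BG_\Delta(1)$ is vacuous: $(q^{\frac12}-q^{-\frac12})^{N}$ vanishes at $q=1$ for every $N\ge 1$, so the comparison reads $0=0$ whatever exponent you choose (this is precisely the point made after Theorem 7 in the introduction). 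The exponent must come, as above, from the vertex count of a trivalent rational tree, so the decisive computation you postponed is exactly the one your proposal gets wrong.
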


\begin{coro}
The number $N^{\dd,\C}_\Delta$ of complex rational curves in $\C\Delta$
with the Newton polygon $\Delta$ passing through $\PP$
is determined by $R_\Delta$. 
\end{coro}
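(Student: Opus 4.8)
The plan is to treat Theorem \ref{BG} as an exact factorization in the ring of Laurent polynomials and simply invert the known factor. By the discussion following \eqref{Rd}, $R_\Delta$ lies in $\Z[q^{\pm 1/2}]$, and Theorem \ref{BG} asserts the identity $R_\Delta = (q^{1/2}-q^{-1/2})^{m-2}\,BG_\Delta$ there. Since $\Z[q^{\pm 1/2}]$ is an integral domain and $(q^{1/2}-q^{-1/2})^{m-2}$ is a fixed nonzero element, $BG_\Delta$ is the unique element of $\Z[q^{\pm 1/2}]$ whose product with $(q^{1/2}-q^{-1/2})^{m-2}$ equals $R_\Delta$; equivalently, $BG_\Delta$ is the exact quotient $R_\Delta\,(q^{1/2}-q^{-1/2})^{-(m-2)}$, the division being exact precisely because Theorem \ref{BG} guarantees divisibility. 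Thus the whole Laurent polynomial $BG_\Delta$ is recovered from $R_\Delta$ by a purely formal operation, and in particular $BG_\Delta(1)$ is determined by $R_\Delta$.

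It is worth stressing why one cannot simply substitute $q=1$ into the two sides of Theorem \ref{BG}: the factor $(q^{1/2}-q^{-1/2})^{m-2}$ vanishes at $q=1$, so $R_\Delta$ vanishes at $q=1$ and carries no information about $BG_\Delta(1)$ on its own. The content is that $R_\Delta$ has a zero of order at least $m-2$ at $q=1$ — which is exactly the divisibility asserted by Theorem \ref{BG} — and then $BG_\Delta(1)=\lim_{q\to 1} R_\Delta(q)\,(q^{1/2}-q^{-1/2})^{-(m-2)}$ is a finite number expressed through the coefficients of $R_\Delta$ (concretely through the $(m-2)$-th derivative of $R_\Delta$ at $q=1$, although no explicit formula is needed). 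Finally, that $BG_\Delta(1)$ equals the number of complex rational curves in $\C\Delta$ with Newton polygon $\Delta$ through $\PP$ is the content of the tropical correspondence theorem for the boundary-incidence problem: $BG_\Delta$ is defined by \eqref{BGDeltadef} as a weighted count of tropical curves with Newton polygon $\Delta$ through a generic boundary configuration subject to the tropical Menelaus condition \eqref{mo0}, each tropical curve weighted by a product of $q$-deformed vertex multiplicities; specializing $q$ to $1$ turns each $q$-number into the corresponding integer, so the weight degenerates to the classical lattice multiplicity, and the resulting tropical count is the complex count by \cite{BlGo}, \cite{ItMi}.

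The argument has no real obstacle beyond making the first paragraph precise: once Theorem \ref{BG} is established, the corollary is a formal consequence of it together with the already-known identification $BG_\Delta(1)=\#\{\text{complex rational curves through }\PP\}$. The only point demanding a little care is to confirm that the correspondence theorem applies to this particular boundary-point enumerative problem and to the normalization of $BG_\Delta$ used here, and this is handled by the sources cited for the construction of $BG_\Delta$, so no new input is required.
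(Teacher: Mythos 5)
Your proposal is correct and is essentially the paper's own argument: the corollary is treated as an immediate formal consequence of Theorem \ref{BG}, recovering $BG_\Delta$ from $R_\Delta$ by exact division by $(q^{1/2}-q^{-1/2})^{m-2}$ in the Laurent polynomial ring (the paper's introduction makes exactly your point that one cannot specialize to $q=1$ first, since that factor vanishes there), and then invoking the cited tropical correspondence results for the identification of $BG_\Delta(1)$ with the complex count.
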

\begin{proof}
By \cite{Mi05} the number $N^{\dd,\C}_\Delta$
coincides with the value of $\ntropm_\Delta$
at $q=1$.
\end{proof}
Let us reiterate that $R_\Delta$ accounts only for curves 
in $\ctor$ defined over $\R$.


\subsection{Holomorphic disk interpretation}
Recall that an orientation of a real rational curve $\R C$ defines
a connected component $S\subset\C \tilde C\setminus\R \tilde C$.
Let $D$ be the topological closure of the image of $S$
in $\C \Delta$.
The disk $D\subset\C\Delta$ is
a holomorphic disk whose boundary $\dd D=\R C$ is contained in
the Lagrangian subvariety $\R\Delta\subset\C\Delta$.

Let $L$ be the topological closure of the quadrant $\R_{>0}^2$ in $\C \Delta$.
Note that $L$ is a Lagrangian subvariety of $\C \Delta$ with boundary.
The image $\Fr^\Delta(D)$ is a holomorphic disk whose
boundary is contained in $L$.

Thus the expression \eqref{Rd} may also be interpreted as a refined
enumeration of holomorphic disks with boundary in $L$,
passing through $\PP$,
and tangent to $\dd\R\Delta$ at the points of $\PP$.

These disks 
are images under $\Fr^\Delta$ of disks $D$ with boundary in $\R \Delta$
and
\begin{equation}
\Fr^\Delta(D)\cap\dd\C\Delta=\PP\subset\dd\R\Delta.
\end{equation}
Let $\widehat{\C\Delta}$ be the result of blowup of the toric variety $\C \Delta$ at $\PP$.
Let $\hat L=\overline{\rtor}\setminus\widehat{\dd\R\Delta}$
where $\overline{\rtor}$
is the topological closure of $\rtor$ in
$\widehat{\C\Delta}$ and
$\widehat{\dd\R\Delta}$ is the proper transform of
$\dd\R \Delta$ in $\widehat{\C\Delta}$.
Then a holomorphic disk $D$ lifts to a holomorphic disk $\hat D$
with the boundary in the non-compact Lagrangian subvariety $\hat L\subset\widehat{\C\Delta}$
without boundary. Furthermore, the Maslov index of $\hat D$ is 0.

\ignore{
\begin{rmk}[S. Galkin, G. Mikhalkin]
The quantum index of real rational curves from Example \ref{exa-rq} can also be interpreted
through the (non-commutative) relative homotopy group $\pi_2(X,L)$, where
$L$ is the real 2-torus $(\R/\pi\Z)^2$ and the topological space
$X$ is obtained by attaching to $L$
$n$ copies of the disk $D^2$ with the help of the identification of $\dd D^2$
with the subtorus of $L$ dual to the side $E_j$, $j=1,\dots,n$.
Consider the epimorphism
\begin{equation}\label{hoHe}
\pi_2(X,L)\to H
\end{equation}
to the Heisenberg group $H$ generated by elements $x,y,q$ and the relations
$[x,y]=q$, $[x,q]=1$, $[y,q]=1$.

...
\end{rmk}
}

\ignore{

\subsection{Quantum indices of phase-tropical curves}
It is especially easy to compute the state of a real curve
if it is close (in the sense of the tropical limit)
to an irreducible 3-valent real phase-tropical curve.

Let $h:\Gamma\to\R^2$ be a tropical curve,
where $\Gamma=\bar\Gamma\setminus\dd\bar\Gamma$
for a finite graph $\bar\Gamma$ and $\dd\bar{\Gamma}\subset\bar{\Gamma}$ is
the set of its one-valent vertices. The topological space $\Gamma$
is enhanced with an inner complete metric so that $h$ is continuous.
Furthermore
the restriction of $h$ to any edge $K\subset C$ is smooth,
$(dh|_K)(u)\in\Z^2$ for a unit tangent vector $u$ to $K$ and
the balancing condition from \cite{Mik05} holds.
We assume that all vertices of $\Gamma$ are 3-valent

A real phase-tropical structure is given by specifying
a pair of quadrants for each edge of $\Gamma$ as in \cite{Mik-facesofgeometry}.
If the real phase-tropical structure is of type I then
the (3-valent) vertices of $\Gamma$ split into two subsets, see e.g. \cite{BIMS}.
A choice of one of these two subsets is equivalent to
a choice of one of the two complex orientations of the curve close to
our real phase-tropical curve (which will also be necessarily of type I).
We call the vertices from the subset of our choice {\em positive}
and the remaining vertices {\em negative}. We denote the sign
of a vertex $v\in \Gamma$ with $s(v)=\pm 1$
\begin{lem}
The quantum state of a phase-tropical curve close to a 3-valent real phase-tropical
curve of type I is $$\sum\limits_{v} s(v)m(v).$$
Here $m(v)$ is the tropical multiplicity of $v$, see \cite{Mik05},
and the sum is taken over all vertices of $\Gamma$.
\end{lem}
}

\section{Proofs}
\subsection{Proof of Proposition \ref{2kdeg} and Theorems \ref{thm-larea}, \ref{thm-lrot} and \ref{thm-index}}
Consider the map $\Arg:\ctor\to(\R/2\pi\Z)^2$ defined by
\begin{equation}\label{arg}
\Arg(z,w)=(\arg(z),\arg(w))
\end{equation}
and
the map $2\Arg:\ctor\to(\R/\pi\Z)^2$ obtained by multiplication
of $\Arg$ by 2,
in other words a composition of $\Arg$
with with the quotient map
$(\R/2\pi\Z)^2\to(\R/\pi\Z)^2$.
The involution of complex conjugation in $\ctor$ descends to
$(\R/\pi\Z)^2$ as the involution $\sigma:(a,b)\mapsto (-a,-b)$.
Denote with
\begin{equation}\label{pillow}
P=(\R/\pi\Z)^2/\sigma
\end{equation}
the quotient space.
The orbifold $P$ is the so-called {\em pillowcase}.
The projections of the four points $(0,0),(\frac\pi 2,0),
(0,\frac\pi 2),(\frac\pi 2,\frac\pi 2)$
form the $\Z_2$-orbifold locus of $P$ (the corners of the pillowcase).
All other
points are smooth.
We denote with $0\in P$ the origin of $P$,
i.e. the projection of $(0,0)$.
Note that $(2\Arg)^{-1}(0,0)=\rtor$.
The product volume form
on $(\R/\pi\Z)^2$ defines the volume
form $d\Vol_P$ on the smooth 
points of the orbifold $P$
since the involution $\sigma$
is orientation-preserving.

Let $\R C$ be a real curve of type I with 
real or purely imaginary
coordinate intersection.
Consider the surface $S^\circ=S\setminus\nu^{-1}(\dd\C\Delta)$,
where $S$ is the component of $\C\tilde C\setminus\R \tilde C$
corresponding to the orientation of $\R C$ and $\nu$ is
the normalization map \eqref{nu}.
Denote with
\begin{equation}\label{beta}
\beta:S^\circ\to P
\end{equation}
the composition of the map $2\Arg|_{S^\circ}:S^\circ\to (\R/\pi\Z)^2$
and the projection $(\R/\pi\Z)^2\to P$.

Let $p\in P$ be a regular point of $\beta$.
A point $q\in\beta^{-1}(p)$ is called {\em positive} (resp. {\em negative})
if locally near $q$
the map $\beta$ is an orientation-preserving
(resp. orientation-reversing)
open embedding. The difference between the number of positive and negative
points in $\beta^{-1}(p)$ is called {\em the degree at $p$}.
A priori, since $\beta$ is a non-proper map, the degree at different
points could be different.

\begin{lem}\label{lem-larg}
We have
\begin{equation}\label{larg}
\larea(\R C)=\int\limits_{S^\circ}\beta^*d\Vol_P.
\end{equation}
Furthermore, the degree of $\beta$ at a generic point of $P$ is
$2k(\R C)$.
\end{lem}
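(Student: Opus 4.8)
The plan is to identify the $2$-form $\beta^{*}d\Vol_{P}$ with $\Log^{*}(dx\wedge dy)$ on $S^{\circ}$, and then to extract both assertions from the change-of-variables (area) formula applied to $\Log|_{S^{\circ}}$ and to $\beta$ in turn. Let $z,w$ be the toric coordinates of $\ctor$, restricted to $S^{\circ}$ via the tautological map. A pointwise computation shows that $\beta^{*}d\Vol_{P}$ is a constant multiple of $d(\arg z)\wedge d(\arg w)$, and with the normalization of $d\Vol_{P}$ used here (for which $\Vol(P)=\tfrac{\pi^{2}}{2}$) that constant is $1$. Since $S^{\circ}$ is a Riemann surface, the $(2,0)$-form $d\log z\wedge d\log w$ vanishes identically on it; writing $\log z=\log|z|+i\arg z$, $\log w=\log|w|+i\arg w$ and taking the real part of $d\log z\wedge d\log w=0$ yields $d(\arg z)\wedge d(\arg w)=d(\log|z|)\wedge d(\log|w|)=\Log^{*}(dx\wedge dy)$ on $S^{\circ}$. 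Hence $\beta^{*}d\Vol_{P}=\Log^{*}(dx\wedge dy)$. (This is the incarnation, in the present setting, of the identity underlying the Passare--Rullg\aa{}rd area estimate.)

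For the first equality, the area formula gives $\int_{S^{\circ}}\beta^{*}d\Vol_{P}=\int_{S^{\circ}}\Log^{*}(dx\wedge dy)=\int_{\R^{2}}\deg_{p}(\Log|_{S^{\circ}})\,dp$, where $\deg_{p}$ is the signed local degree at a generic $p$. Now $S^{\circ}$ is a $2$-manifold with boundary $\partial S^{\circ}=\R\tilde C^{\circ}$, oriented by the complex orientation of $S\subset\C\tilde C$, and its ends (which lie over $\partial\C\Delta$, where a coordinate vanishes or blows up) run off to infinity in $\R^{2}$. Compactifying the ends and extending $\Log$ so as to send them to the point at infinity of $S^{2}=\R^{2}\cup\{\infty\}$, the standard homological argument — equivalently, Stokes' theorem with the primitive $x\,dy$ of $dx\wedge dy$ — identifies $\deg_{p}(\Log|_{S^{\circ}})$ with the winding number of the oriented curve $\Log(\R C^{\circ})=\Log(\partial S^{\circ})$ about $p$, i.e.\ with $\ind_{\R C}(p)$; finiteness of the integral follows from the behavior at the ends (or from the Passare--Rullg\aa{}rd inequality). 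Thus $\int_{S^{\circ}}\beta^{*}d\Vol_{P}=\int_{\R^{2}}\ind_{\R C}(p)\,dp=\larea(\R C)$.

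For the degree statement, applying the area formula to $\beta$ gives $\int_{S^{\circ}}\beta^{*}d\Vol_{P}=\int_{P}\deg_{p}(\beta)\,d\Vol_{P}(p)$. If $\deg_{p}(\beta)$ equals a single value $\deg\beta$ for a.e.\ $p\in P$, then the first part forces $\larea(\R C)=\deg\beta\cdot\Vol(P)=\deg\beta\cdot\tfrac{\pi^{2}}{2}$, hence $\deg\beta=\tfrac{2}{\pi^{2}}\larea(\R C)=2k(\R C)$. To see this constancy: the local degree of $\beta$ jumps only across $\beta(\partial S^{\circ})$ together with the accumulation set of the ends of $S^{\circ}$. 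Here $\beta(\partial S^{\circ})=\{0\}$, since $2\Arg(\rtor)=\{0\}$; and near each end — which corresponds to an intersection point of $\C\bar C$ with a toric divisor, purely imaginary or real by hypothesis — $2\Arg$ of the non-winding coordinate tends to $0$, so $\beta$ accumulates onto one of finitely many rational circles of the pillowcase $P$ through its corner points. This accumulation set $\Gamma$ has measure zero, and one checks that $\deg_{p}(\beta)$ agrees across $\Gamma$ — for instance by compactifying $S^{\circ}$ to a compact surface whose boundary maps into $\Gamma$ and computing the resulting class in $H_{2}(P,\Gamma)$, or by using the symmetry of $P$ that lifts complex conjugation on $\C\tilde C^{\circ}$, under which $\beta$ is invariant.

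The routine parts are the form identity and the first equality; the crux is the last step — controlling the non-properness of $\beta$ so that its generic local degree is a single well-defined integer, rather than varying between the regions cut out by the accumulation set of the ends. This is exactly where the hypothesis of purely real or imaginary coordinate intersections enters: it confines the ends of $\beta$ to the rational circles through the corners of the pillowcase, instead of letting them sweep out two-dimensional subsets of $P$.
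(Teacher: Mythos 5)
Your overall route is the same as the paper's: kill the holomorphic $2$-form $d\log z\wedge d\log w$ on the curve to identify $\beta^*d\Vol_P$ with $\Log^*(dx\wedge dy)$, identify that integral with $\larea(\R C)$ via the winding number of the boundary, and then read off the degree of $\beta$ from $\Vol(P)=\pi^2/2$. The first two steps are fine (indeed you supply more detail on $\deg_p(\Log|_{S^\circ})=\ind_{\R C}(p)$ than the paper, which just refers to the literature). The genuine gap is at the step you yourself flag as the crux: the constancy of the generic local degree of the non-proper map $\beta$. Observing that the accumulation set $\Gamma$ (corner-to-corner segments plus the point $0$) has measure zero and producing a class in $H_2(P,\Gamma)$ only gives that the degree is locally constant on $P\setminus\Gamma$; it does not by itself show the degree "agrees across $\Gamma$", since a priori the degree jumps across an arc of $\Gamma$ by the net multiplicity with which the boundary of the compactification traverses that arc. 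Your second proposed fix does not work at all: complex conjugation interchanges $S$ with $\conj(S)$ rather than preserving $S^\circ$, and it descends to the identity on $P$, so no symmetry of $P$ relates the two sides of an arc of $\Gamma$.

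The missing observation — and this is exactly where the hypothesis of purely real or imaginary coordinate intersections is used — is the following cancellation. A small boundary circle around an end wraps some number of times around a geodesic circle of $(\R/\pi\Z)^2$ in the direction $\nuu(E_j)$ passing through the limiting argument of the non-vanishing coordinate; the hypothesis forces that limiting value to be a half-period, so this geodesic is $\sigma$-invariant and is folded $2{:}1$ by the quotient onto a segment of $P$ joining two corners. Each full loop of the geodesic therefore traverses every interior point of the segment once in each direction, so the boundary circle pushes forward to the \emph{zero} $1$-cycle on $\Gamma$, and the local degree does not jump across $\Gamma$ (the components of $\R\tilde C$ contribute nothing, being mapped to the single point $0$). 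This is the content of the paper's phrase that $\bar\beta(\bar S)$ is a $2$-cycle in $P$, after extending $\beta$ to a compact surface with boundary $\bar S$. Note also that your closing diagnosis is off: without the hypothesis the ends still accumulate on one-dimensional circles (never two-dimensional sets); what the hypothesis buys is their $\sigma$-invariance, hence the folding and the vanishing of the degree jump — if the limiting argument were generic, the image circle in $P$ would be traversed a nonzero net number of times and the generic degree of $\beta$ would genuinely depend on the chamber.
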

\begin{proof}
Consider the form
\begin{multline}
\frac{dx}x\wedge\frac{dy}y=(d\log|x|+id\arg(x))\wedge (d\log|y|+id\arg(y))=\\
d\log|x|\wedge d\log|y| - d\arg(x)\wedge d\arg(y)+\\
i(d\log|x|\wedge d\arg(y)+d\arg(x)\wedge d\log|y|)
\end{multline}
on $\ctor$.
As it is a holomorphic 2-form, it must restrict to the zero form
on any
holomorphic curve in $\ctor$. In particular, the real part of this
form must vanish everywhere on $S^\circ$, so that
$d\log|x|\wedge d\log|y| = d\arg(x)\wedge d\arg(y)$ on $S^\circ$,
and thus \eqref{larg} holds, cf. \cite{Mi14}.

The smooth map $\beta:S^\circ\to P$ extends to a continuous map
\begin{equation}\label{barbeta}
\bar\beta:\bar S\to P
\end{equation}
for a surface with boundary $\bar S\supset S^\circ$
such that $S^\circ=\bar S\setminus\dd\bar S$.
Each
$p\in\C \bar C\cap\C E_j$ corresponds
to a geodesic in $(\R/\pi\Z)^2$ in the direction
parallel to $\nuu(E_j)$
for a side $E_j\subset\dd\Delta$,
cf. \cite{MiOk}.
Since $\Fr^{\Delta}(p)\in\R\Delta$
the corresponding geodesic passes through two of the points
$(0,0),(\frac\pi 2,0), (0,\frac\pi 2),(\frac\pi 2,\frac\pi 2)$.
The image of this circle in $P$ is a geodesic segment connecting
the corresponding $\Z_2$-orbifold points of $P$.

Thus $\bar\beta(\bar S)$ is a 2-cycle in $P$
and it covers a generic point
$l$ times (counted algebraically), where $l$ is a number
independent on the choice of a generic point.
But then $\int\limits_{S^\circ}\beta^*d\Vol_P=l\Area(P)=l\pi^2/2$.
Since we have already proved \eqref{larg} we get
$l=2\frac{\Area(\R C)}{\pi^2}=2k(\R C)$
(the last equality is the definition of $k(\R C)$).
\end{proof}

Note that this lemma implies Proposition \ref{2kdeg}.
\begin{proof}[Proof of Theorem \ref{thm-larea}]
We have $k(\R C)\in\frac12\Z$ since $2k(\R C)$ is the degree of $\beta$ at a generic
point of $P$ by Lemma \ref{lem-larg}.
Let $\tilde a\in (\R/\pi\Z)^2$ be a generic point and
$a\in P$ be the point corresponding to $\tilde a$.
The inverse image $\beta^{-1}(a)$ consists of points of $S^\circ$ mapped
to $\tilde a$ or $\sigma(\tilde a)$.
If $2\Arg(p)=-\tilde a$ for $p\in S^\circ$
then $2\Arg(\conj(p))=\tilde a$,
where $\conj(p)\in\conj(S^\circ)$.
Thus we have a 1-1 correspondence between sets
$\beta^{-1}(a)$
and $R=(2\Arg)^{-1}(\tilde a)\cap{\C C^\circ}$.

Consider the continuous involution
$\conj_{\tilde a}:\C\Delta\to \C\Delta$
extending
the involution of $\ctor$ defined
by $z\mapsto e^{i\tilde a}\conj(e^{\sigma(\tilde a)}(z))$.
Note that the fixed point locus of this involution in
$\ctor$ coincides with $(2\Arg)^{-1}(\tilde a)$, cf. \cite{Mi14}.
Note that
\begin{equation}\label{Q}
R\subset \C C^\circ\cap \conj_{\tilde a}(\C C^\circ)
\end{equation}
while $R\setminus (\C C^\circ\cap\conj_a(\C C^\circ))$
consists of pairs of points interchanged by the involution
$\conj_{\tilde a}$.
For generic $\tilde a$
the curve $\conj_{\tilde a}(\C C^\circ)$ is transverse to $\C C^\circ$,
while $\conj_{\tilde a}(\C C^\circ)\cap\C C^\circ\cap\dd\C\Delta=\emptyset$.

Thus the number of points in $R$ is not greater
than
$\#(\C C^\circ\cap\conj_{\tilde a}(\C C^\circ))$,
while we have $\#(\C C^\circ\cap\conj_{\tilde a}(\C C^\circ))
=2\Area(\Delta)$
by the Kouchnirenko-Bernstein theorem \cite{Kou}.
Thus the degree of $\beta$ takes values between $-2\Area(\Delta)$
and $2\Area(\Delta)$.
Also $\#(R)=2\Area(\Delta)$.
\end{proof}

\begin{proof}[Proof of Theorem \ref{thm-lrot}]
Let us compute the degree of the map \eqref{barbeta} at a generic point $a\in P$
close to the origin $0\in P$.
The set $\bar\beta^{-1}(0)\cap S^\circ$
contributes $2E$ to the degree of $\bar\beta$
as the intersection number gets doubled when we pass from $(\R/\pi\Z)^2$ to $P$.

Note that the set
$S_{\R}=\bar\beta^{-1}(0)\cap \dd \bar S$
can be thought of
as the topological closure of $\R C^\circ$ in $\bar S$
by our assumption of transversality to $\dd\R\Delta$.
\ignore{
Let $V\subset P$ be a small regular neighborhood
of $(0,0)\in P$.
Denote $U=\bar\beta^{-1}(V)$.
Each component $K\subset U$ is a neighborhood
of a component of $S_{\R}$.
The boundary $\dd K$ of the open set $K\subset\bar S$
is homeomorphic to the corresponding component
$K_{\R}\subset S_{\R}$. 
In its turn $K_{\R}$ is
the closure of a component $C_K$ of $\R C^\circ$.

Let $U\subset \bar S$ be a small regular
neighborhood of $S_{\R}$.
We have $U\approx S_{\R}\times[0,\epsilon)$.
with $S_{\R}\times\{0\}$ corresponding to $S_{\R}$.
}
Consider a non-vanishing 
tangent vector field ${\gamma}$
on $\R\tilde C^\circ$ such that it extends
to a tangent vector field on $\R\tilde C$
with simple zeroes at
$\R\tilde C\setminus\R\tilde C^\circ$.
Our condition on zeroes of ${\gamma}$
implies that $\pm i{\gamma}$ is consistent
with a trivialized regular neighborhood
$U\approx S_{\R}\times [0,1)$
(we take $i{\gamma}$ on the components
of $S_{\R}$ where ${\gamma}$ agrees with
the complex orientation of $\R C^\circ$
and $-i{\gamma}$ otherwise). 
The lift $\tilde\beta_\epsilon$ of 
$\bar\beta|_{S_{\R}\times\{\epsilon\}}$
to $\R^2/\{(x,y)\sim (-x,-y)\}$ is approximated
(to the first order by $\epsilon$)
by $\epsilon {\gamma}$ for small $\epsilon>0$.
Thus the linking number of the image of
$\tilde\beta_\epsilon$
and
$(0,0)\in\R^2/\{(x,y)\sim (-x,-y)\}$ is 
$\lrot(\R C)$.
%
%
%
%
%
Thus
$S_\R$ contributes $-\lrot(\R C)$ to the degree of $\bar\beta$.
We have the appearance of the negative sign since
the basis composed of vectors $v_1,v_2,iv_1,iv_2$
is negatively-oriented in $\C^2$ whenever vectors $v_1,v_2$
are linearly independent over $\C$.
Thus a positive rotation in $(i\R)^2$ (and therefore also in $P$)
corresponds to a negative contribution to the degree of $\bar\beta$.
\end{proof}

\ignore{
\begin{proof}[Proof of Proposition \ref{closedK}]
If $\R C$ has toric type I then the map \eqref{beta}
lifts to the universal covering $\R^2$ of the torus $(\R/2\pi\Z)^2$.
The boundary of the push-forward of $S^\circ$ to $\R^2$ multiplied by $\frac 1\pi$ is
a representative of $\Sigma(\R C)$, so the broken line $\Sigma(K)$
must be closed.
Conversely, if $\R \tilde C$ is an $M$-curve and $\C\tilde C\setminus\R\tilde C$
is disjoint from $\dd\C\Delta$ then
$H_1(S^\circ)$ is generated by the ends of $S^\circ$ which in turn
corresponds to $\Sigma(\R C)$ and the compact components of $\R C^\circ$.
Any compact connected component of $\R C^\circ$
is null-homologous in $\ctor$ as it is contained
in a single quadrant of $\rtor$.
\end{proof}

\begin{proof}[Proof of Lemma \ref{prop-co}]
The surface $M$ is obtained from $\bar S$
by contracting all components of $\dd\bar S$
corresponding to compact components
of $\R C^\circ$.
The map \eqref{mapM} is obtained by lifting the map \eqref{beta} to the universal covering
space $\R^2$
of the orbifold $P$ followed by multiplication by $\frac 1\pi$.
We have a correspondence between $\alpha^{-1}(\Z^2)\cap M^\circ$
and the set of compact components of $\R C^\circ$.
The map $L\circ\alpha|_{M^\circ}$ is harmonic and thus cannot have local maxima
by the maximum principle.
\end{proof}
}
\begin{proof}[Proof of Theorem \ref{thm-index}]
The map \eqref{liftS}
gives the lift of $\beta|_{S^\circ}$ to the
universal covering $\C^2$ of $\ctor$ after rescaling
each coordinate by $\pi$.
Thus the signed area of $\beta(S^\circ)$ coincides
with $\pi^2\Area\Sigma(\R C)$.
Lemma \ref{lem-larg} now implies
that $k(\R C)=\Area\Sigma(\R C)$.
For $(a,b)\in\Z^2$ and $\epsilon\in\tilde{\rp}^1\setminus\tilde{{\mathbb Q}{\mathbb P}}^1$
we consider a point $p_\epsilon$ obtained by a small translation of $(a,b)$
in the direction of $\epsilon$. A point of $S^\circ$ mapped to $p_\epsilon$
by the lift of $\beta$ must correspond to a point of $\R C^\circ$ of
real index $(a,b)$ which is either
singular or has $\epsilon$ as the image of its logarithmic Gau{\ss} map.
Summing up the contributions of all such points we get \eqref{co-ab}.
\end{proof}

\subsection{Proof of invariance}
\begin{proof}[Proof of Theorem \ref{Rinv}]
First we compute the dimension of the space of
rational curves $\R C^\circ\subset\rtor$ with the Newton polygon $\Delta$.
Two coordinate functions on $\ctor$ define two meromorphic
functions on the Riemann surface $\C\tilde C$.
The set of zeroes and
poles of these functions is $\dd \C\tilde C=
\C \tilde C\cap\dd\C\Delta$.
The order of each of
these zeroes and poles is determined by the
multiplicity  of the corresponding intersection points of $\C\bar C$
with $\C E_j$ as well as by the slope of $E_j\subset\Delta$.
We may consider $\dd \C\tilde C$ as
an $m$-tuple of points in $\C \tilde C$
(i.e. an element of the $m$th power 
of $\C \tilde C$).
As $\C \tilde C$ is rational, we may freely deform
this $m$-tuple
while each such deformation extends to the deformation
of the coordinate functions.
The group $PSL_2(\R)$ of symmetries
of $\R\tilde C$
is 3-dimensional, so the space of real deformations
of the $m$-tuple $\dd\C \tilde C$ in $\tilde C$
is $(\#(\dd\C\tilde C)-3)$-dimensional.
The resulting curve is well-defined
up to the multiplicative translation in $\rtor$.
Altogether,
the space of rational curves in $\rtor$
with $m$ distinct ends is a smooth manifold 
of dimension $m-1$.
This number coincides with the dimension
of the space of configurations $\PP$.

Note that a deformation of a single point $p$ to
$p'$ in the $m$-tuple $\dd\C\tilde C$
can be achieved through multiplying
coordinate functions
by a rational function with zeroes
and poles only at $p$ and $p'$.
A non-immersed point of $\C C^\circ$ corresponds
to a common zero of the differentials of the coordinate functions.
Clearly the coordinate functions can be deformed
independently of each other. 
Thus a generic rational curve in $\ctor$
with the Newton polygon $\Delta$ is immersed.

By curves in $\C\Delta$ we mean parameterized
curves, i.e. maps from abstract curves to $\C\Delta$.
For our treatment it is convenient
to consider not only
irreducible rational curves as sources for these maps,
but also the so-called {\em stable rational curves}
with marked points (cf. \cite{Mumford} and \cite{KM}).
These are 
(perhaps reducible)
nodal curves composed
from rational irreducible components so that
the dual graph (formed by irreducible
components as vertices and nodes as edges) is a tree.
The marked points are required to be disjoint from
the nodes.
If an irreducible
component of such curve is adjacent to $j$
other components and contains less than $3-j$
punctures then it can be contracted to
a point (a smooth point if $j=1$ and
to a node if $j=2$) in a new nodal curve
so that the images of the marked points are disjoint
from the nodes. These operations generate equivalence
relation on the space of abstract rational curves
with marked points.

By stable rational curves in $\C\Delta$ we
mean continuous maps from stable rational curves
with $m$ marked points
to $\C\Delta$ so that the restriction to each
irreducible component is holomorphic, and
the marked points are mapped to $\dd\C\Delta$.
Two such maps are considered
to be equivalent if the source curves are
equivalent, and each contraction from the equivalence
contracts a component where the restriction of our map
is constant. Clearly the images of equivalent stable
curves in $\C\Delta$ coincide with each other.

Suppose that the image
$\C\bar C$
of a stable curve in $\C \Delta$ is transversal to
the boundary $\dd\C\Delta$ and has $\Delta$ as its
Newton polygon.
In this case $\dd\C\bar C=\C\bar C\cap\dd\C\Delta$
consists of $m$ points which we treat as marked points
of $\C\bar C$, or punctures of
$\C C^\circ=\C\bar C\cap\ctor$.
Resolution of singularities allows us to present
any such $\C \bar C$ as an image of a stable rational
curve with $m$ marked points.
Furthermore, for the source curve in this case 
we may choose a stable rational curve so that
none of its component is mapped to $\dd\C\Delta$.

Conventional rational curves are stable rational curves
with a single irreducible component.
Any sequence of (stable) rational curves in $\C\Delta$
with the Newton polygon $\Delta$
admits a subsequence
converging to a stable rational
curve in $\C\Delta$ while the Newton
polygon of the limit
is a subpolygon $\Delta'\subset\Delta$.

Let $\MM_{\Delta}$ be the space
of real curves $\R C^\circ\subset\rtor$
(with non-empty real normalizations
$\R\tilde C^\circ\to\R C^\circ$)
such that their Newton polygon is $\Delta$,
the intersection
$\C\bar C\cap\dd\C\Delta$
consists of $m$ distinct points,
and all components
of the curves $\C\bar C$ are rational.
Let $\MM_{\dd\Delta}$ be the space of $\conj$-invariant Menelaus
configurations of $m$ distinct
points in $\dd\C\Delta$
with $m_j$ points in $\C E_j$,
$m=\sum m_j=\#(\dd\Delta\cap\Z^2)$.
\ignore{
Consider a curve $\R C^\circ\in\MM_\Delta$.
Its deformations are described by deformations
of its irreducible components which are
parameterized, as we have already seen,
by smooth manifolds (of dimension less by one than
the number of ends of the corresponding component).
\begin{prop}
\end{prop}
}
This space is an open set in the real
part of the product of $m$ copies of $\C^\times$
(for a choice of a real structure on that product).
Thus $\MM_\Delta$ is a smooth manifold.

\begin{prop}\label{propprop}
The map
\begin{equation}\label{Mmap}
\ev:\MM_{\Delta}\to\MM_{\dd\Delta}
\end{equation}
sending a curve $\R C^\circ$ to the configuration
$\Fr^\Delta(\C \bar C)\cap\dd\C\Delta$
is proper.
\end{prop}
\begin{proof}
Each curve 
from $\MM_\Delta$ can be presented
as an image of a stable rational curve in $\C\Delta$,
and stable rational curves form a compact space
(see \cite{Mumford},
\cite{KM}).
Suppose that we have a sequence of curves
in $\MM_\Delta$ such that their image 
is converging to a point in $\MM_{\dd\Delta}$.
Consider an accumulation point for the sequence. 
It is a map $h:A\to\C\Delta$ from a stable
rational curve $A$ holomorphic on all the components
of $A$.
If no components of $A$ are mapped to
$\dd\C\Delta=\C\Delta\setminus\ctor$ then 
$h(A)\cap\ctor$ has $\Delta$ as its Newton polygon,
otherwise the Newton polygon is a subpolygon of $\Delta$.

It is enough to prove that
no irreducible component $K\subset A$
can be mapped to $\dd\C\Delta$.
Note that $h(K)$ must be contained in
a single boundary divisor of $\C\Delta$,
and intersects
two other boundary divisors at two points $v_1,v_2$,
corresponding
to two vertices of $\Delta$.
In particular the points $v_j$, $j=1,2$,
are disjoint from any configuration of $\MM_{\dd\Delta}$.
Thus each point in the inverse image 
$h^{-1}(v_j)$ must be a node of $A$,
otherwise any perturbation of $h$ in the class
of stable rational curve would intersect $\dd\C\Delta$
at a point close to $v_j$.
But the graph $\Gamma$
dual to the arrangement of components
of $A$ is a tree. Thus there must exist a component $K$
adjacent to a single node. If this component is mapped
to $\dd\C\Delta$ we get a contradiction. If not,
we form a new tree $\Gamma'$ by removing the one-valent
vertex of $\Gamma$ corresponding to $K$ and proceed
inductively. Eventually we find a component of $A$
such that at most one of its adjacent components
is mapped to $\dd\C\Delta$. Thus at most one of
the vertex points $v_1,v_2$ corresponding to
this component can have
the preimage consisting entirely of nodes and we
get a contradiction.
\end{proof}

\ignore{
Consider the space $\MM_{\Delta}$.
It is stratified according to the number
of irreducible components of $\R C^\circ\in\MM_{\Delta}$.
The dimension of the stratum
consisting of
$a$ irreducible components
is equal to the sum of the corresponding dimensions
for the irreducible components,
and thus to $m-a$.
}

\begin{prop}\label{w-generic}
For a generic configuration
$\PP\subset\MM_{\dd\Delta}$
the set $\ev^{-1}(\PP)$ is finite
and consists of irreducible
rational curves
$\C C^\circ$ immersed to $\ctor$.

The space $\MM_\Delta$ is a smooth $(m-1)$-dimensional
manifold outside
of a subset of codimension 2.
For a generic path
$\{\PP_t\}_{t\in[0,1]}\subset\MM_{\dd\Delta}$
connecting two generic configurations $\PP_0$
and $\PP_1$
the inverse image $\ev^{-1}(\{\PP_t\}_{t\in[0,1]})$
is contained in the smooth part of $\MM_\Delta$.
Furthermore,
it consists of curves $\C C^\circ$ with at most two
irreducible components. 
If $\C C^\circ\in \ev^{-1}(\{\PP_t\}_{t\in[0,1]})$
is not irreducible then
both components of $\C C^\circ$
are immersed real rational curves intersecting
transversely with each other.
\end{prop}
\begin{proof}
In the beginning
of the proof of Theorem \ref{Rinv}
we have already shown that 
a generic irreducible curve from $\MM_\Delta$
is smoothly immersed.
\ignore{
We have already noted that a deformation
of an irreducible curve $\C C^\circ$ is determined
by the deformations of the punctures inside
the Riemann sphere $\C\tilde C$ up to a multiplicative
translation in $\ctor$.
A non-immersed point of $\C C\circ\subset\ctor$
corresponds to a common zero of differentials
of two coordinates of $\ctor$ defined so that
they have zeroes (or poles) of given order
at the puncture points. Since the Newton polygon
$\Delta\subset\R^2$ has non-empty interior, we
may move the puncture points so as to perturb
all zeroes of the differential of one of the 
coordinates without changing the zeroes of
another coordinate thus ensuring that the
result of perturbation is an immersed curve.
}
The same argument with perturbation
of the puncture points of $\C C^\circ$
in the Riemann sphere $\C\tilde C$
shows that the irreducible locus of $\MM_\Delta$ is
a smooth $(m-1)$-manifold.
The dimension of the stratum of $\MM_\Delta$
consisting of
$a$ irreducible components
is equal to the sum of the corresponding dimensions,
and thus to $m-a$.
This implies the first part of the proposition.


For the second part we have to study $\MM_\Delta$
outside of a subset of codimension 2.
The locus of reducible curves with two components
has codimension 1 in $\MM_\Delta$.
The set of curves with
two components such that one of them is not
smoothly immersed or such that their
intersection is not transverse
has codimension 1 in the set of reducible curves,
and thus it has codimension 2 in $\MM_\Delta$.
Thus outside of a set of codimension 2 the curves
in $\MM_\Delta$ are nodal.
Any nodal curve in $\C\Delta$ is uniquely parameterized by
an abstract stable curve once we specify
a subset of nodes coming from the nodes of the source
curve.
Thus smoothness of $\MM_\Delta$ outside of a subset
of codimension 2
follows from smoothness of the space of stable
rational curves with $m$ punctures.
\end{proof}

\ignore{
Any two generic configurations in
$\MM_{\dd\Delta}$ may be connected with a path
$\{\PP_t\}$, $t\in [0,1]$, such that 
the set $\RR_{\Delta,k}(\PP)$ of stable
rational curves
$\R C^\circ\subset\rtor$ with the Newton
polygon $\Delta$, the quantum index $k$,
and satisfying to
$\Fr^{\Delta}(\C C)\supset\PP$ is finite,
and consists of immersed and irreducible
rational curves. 
}


Consider the space $\MM_{\dd\Delta,\lambda}\subset\MM_{\dd\Delta}$
of real configurations $\PP\subset\R\Delta$ with $\lambda_j$ points in $\R E_j\setminus (\overline{(\R_>0)^2})$.
Note that the space $\MM_{\dd\Delta,\lambda}$ 
is connected.
Any curve in $\ev^{-1}(\MM_{\dd\Delta,\lambda})$ satisfies to the hypothesis
of Theorem \ref{thm-larea}, thus its quantum index is well-defined
(once the orientation of each irreducible
component is fixed).
Let $\PP,\PP'\in\MM_{\dd\Delta,\lambda}$
and $\gamma=\{\PP_t\}_{t\in [0,1]}\in\MM_{\dd\Delta,\lambda}$
be a smooth generic path connecting two
generic configurations $\PP=\PP_0$ and $\PP'=\PP_1$
from $\MM_{\dd\Delta,\lambda}$.

Let ${\mathcal C}\subset 
\ev^{-1}(\gamma)$
be a connected component.
By Proposition \ref{w-generic},
since the path $\{\PP_t\}$ is generic,
the component $\mathcal C$ is a smooth manifold and thus
is diffeomorphic to an interval or a circle.
All but finitely many values of $t$ correspond to $\PP_t$
such that $\RR_{\Delta,k}(\PP_t)$ consists
of smoothly immersed irreducible curves.

\begin{prop}\label{wlem}
If ${\mathcal C}$ is disjoint 
from 
the locus of reducible curves in $\MM_{\Delta}$
then 
\begin{equation}\label{locW}
\sum\limits_{\R\bar C\in\RR_{\Delta,k}(\PP_0)
\cap{\mathcal C}}\sigma(\R C)=
\sum\limits_{\R\bar C\in\RR_{\Delta,k}(\PP_1)
\cap{\mathcal C}}\sigma(\R C).
\end{equation}
\end{prop}
Note that our definitions of the sign
$\sigma(\R C)$ depends on the orientation
of $\R C$
(recall that $\MM_\Delta$ is defined
as the space of {\em oriented} real curves).
With the help of 
\eqref{sigma-w} we may reformulate the proposition
in terms of Welschinger's signs
$w(\R C)=(-1)^{E(\R C)}$ which are independent
on the choice of the orientation of $\R C$ as
\begin{equation}\label{locWw}
\sum\limits_{\R\bar C\in\RR_{\Delta,k}(\PP_0)
\cap{\mathcal C}}w(\R C)=
\sum\limits_{\R\bar C\in\RR_{\Delta,k}(\PP_1)
\cap{\mathcal C}}w(\R C)
\end{equation}
since the quantum index $k$ is constant
on $\mathcal C$.
In this form Proposition \ref{wlem} may be viewed
as a version of Welschinger's invariance.
To prove the proposition we need to recall 
orientation constructions for the space
of sections of real vector bundles over
an oriented real rational curve.
We do it in a series of auxiliary lemmas.

Let $\pi_A$ 
be a line bundle
of degree $a\ge 0$
over an oriented real rational (non-empty) curve
$\R K$, $\xi$ be its section with
distinct real zeroes $x_1,\dots,x_a\in \R K$,
and $x_0\in\R K$ be a point such that 
$x_0,x_1,\dots,x_a,\in \R K$
are numbered in the order 
consistent with the orientation
of $\R K$.
For each $j=1,\dots,a$, consider 
a section $\xi_j$ of $\pi_A$ 
whose zeroes are obtained from $\{x_l\}_{l=1}^a$
by moving the point $x_j$ in the positive
direction in $\R K$ while leaving the other
points unchanged.
In addition we assume that
$\xi(x_0)$ and $\xi_j(x_0)$
are positive multiples of each other.

%
\ignore{
Choose an orientation of $\pi_A$
over $\R K\setminus\{x_{\infty}\}$, so that
we may speak of the sign of a section of $\pi_A$
at a point other than $x_{\infty}$.
If $a$ is even then an orientation over
$\R K\setminus\{x_{\infty}\}$ canonically
extends to an orientation of $\pi_A$.

These data determine
(up to positive scalar multiplication)
$a+1$ sections
$\xi_0,\dots,\xi_a\in\Gamma(\pi_A)$ by the conditions
$\xi_j(x_k)=0$ for $j\neq k$ and $\xi_j(x_j)>0$,
$j,k=0,\dots,a$.
Here $\Gamma(\pi_A)$ stand for the space of sections
of the line bundle $\pi_A$.
}

\newcommand{\orr}{\operatorname{or}}
\begin{lem}\label{olb}
For a real algebraic line bundle $\pi_A$
of degree $a\ge 0$ over an oriented
rational real curve $\R K$
the sections $\xi,\xi_1,\dots,\xi_a$ form a basis
of $\Gamma(\pi_A)$. The orientation
$\orr_A$ of the vector
space $\Gamma(\pi_A)$ determined
by $\xi,\xi_1,\dots,\xi_a$ depends only on the orientation
of the curve $\R K$
if $a$ is odd.
If $a$ is even then $\orr_A$ depends
on the orientation
of the curve $\R K$ as well
as the orientation of $\pi_A$ 
determined by the non-zero vector $\xi(x_0)$.
\end{lem}
\begin{proof}
Since the points $x_1,\dots,x_a$ are distinct,
any linear combination of the sections $\xi_j$
vanishing at $x_l$ must have zero as the coefficient 
at $\xi_j$.
The sections $\xi_j$ along with $\xi$ form a basis
since $\dim\Gamma(\pi_A)=a+1$ and $\xi\neq 0$.
All configurations of $a+1$
distinct points in $\R K$ in the same cyclic order
are isotopic to each other.
Reversing the orientation of $\pi_A$ over $x_0$
results in reversing the sign for all sections
of our basis.
The resulting basis is of the same sign
if and only if $a+1$ is even.
\end{proof}

Let now $\pi_E$ 
be a real algebraic 2-dimensional
vector bundle of degree $d$ over $\R K$.
We assume that $\pi_E$ is generated by global
sections. Since $\pi_E$ decomposes into the sum
of two line bundles (as $\R K$ is rational),
this assumption is equivalent to non-negativity
of the degrees of both line bundles. 
In particular, we have $d\ge 0$ and
$\dim \Gamma(\pi_E)=d+2\ge 2$.

Suppose that $\pi_A:\R A\to\R K$
is a (1-dimensional) subbundle
of $\pi_E$
of non-negative
degree $a$. Let $\pi_{E/A}$ be the corresponding
quotient line bundle. 
We get the following short exact sequence
\begin{equation}\label{Gs}
0\to\Gamma(\pi_A)\to\Gamma(\pi_E)\to
\Gamma(\pi_{E/A})\to 0
\end{equation}
which allows us to combine orientations
of $\Gamma(\pi_A)$ and $\Gamma(\pi_{E/A})$
into an orientation of $\Gamma(\pi_E)$.
Note that if $a$ is even then
changing the sign of $\xi(x_0)$
also changes $\orr_A$.
Thus the following statement is thus
a corollary of Lemma \ref{olb}.
\begin{coro}
\label{coroor}
Let $\pi_E$ be a real algebraic 2-dimensional
vector bundle of degree $d$ generated
by global sections.
Let $\pi_A$
be its real algebraic 1-dimensional
subbundle of degree $a\ge 0$.

If $d$ is even then
the orientation of $\Gamma(\pi_E)$
is defined by $\pi_A$ together
with an orientation of the bundle $\pi_E$.

If $d$ and $a$ are odd then
the orientation of $\Gamma(\pi_E)$
is defined by $\pi_A$ together
with an orientation of the bundle $\pi_{E/A}$.

If $d$ is odd while $a$ is even then
the orientation of $\Gamma(\pi_E)$
is defined by $\pi_A$ together
with an orientation of the bundle $\pi_{A}$.
\end{coro}
Denote the resulting orientation of 
$\Gamma(\pi_E)$ with $\orr_{E,A}$.
Consider the decomposition $\pi|_E=\pi_b\oplus
\pi_c$ into a direct sum of two line subbundles
of degrees $b$ and $c$ respectively, $b\ge c\ge 0$.

A section of $\pi_A$ is a sum of a section
$\sigma_b$ 
of $\pi_b$ and a section $\sigma_c$ of $\pi_c$.
Note that the degree $a$ is equal
to the number of common zeroes of 
$\sigma_b$ and $\sigma_c$
(counted with multiplicities).
Thus varying $\sigma_b$ and $\sigma_c$ we
get a deformation of the line subbundle $\pi_A$
within the line subbundles of the same degree
as long as the common zeroes are deformed together,
and no new common zeroes are introduced
(even if at some instant of a deformation a zero
of $\sigma_b$ or $\sigma_c$ passes through 
one of the common zeroes).
The orientation $\orr_{E,A}$ is unchanged
under such a deformation.

\newcommand{\p}{\mathbb P}
\newcommand{\Ss}{\mathcal S}
Consider the circle bundle
$F_+\to\R K$ obtained by positive
projectivization of $\pi_E$.
Here $F_+$ is a (topological) surface
obtained from the $\R E$ by removing
the zero section of $\pi_E$ and identifying
the positive scalar multiples in the same fiber
of $\pi_E$.
Depending on the parity of $d=b+c$,
the surface $F_+$ is a torus or a Klein bottle.
We have $H_1(F_+;\Z_2)\approx\Z_2^2$, in particular
the subset $\Ss\subset H_1(F_+;\Z_2)$ consisting
of the classes with nontrivial image under
the homomorphism induced by $F_+\to\R K$
is a 2-element set.

Given an oriented line subbundle $\pi_A\subset\pi_E$
denote with $s(A)\in\Ss$ the homology class
of the projectivizations of the positively-directed
vectors in $\pi_A$.
Given a non-orientable line subbundle $\pi_A\subset\pi_E$
($a$ is odd)
and a local orientation of $\pi_E$ over $x_0$
we may add (topologically) a left half-twist 
to the subbundle $\pi_A$ to obtain
an orientable bundle and proceed as above
to define $s(A)\in\Ss$.
Note that in the case when both $a$ and $d$ are
odd, and the orientation of $\pi_{E/A}$ is fixed
(in particular, over $x_0$) then $s(A)$ depends only
on this orientation as a different choice of 
the local orientation of $\pi_E$ over $x_0$
changes both the direction of the half-twist
and the orientation of the line bundle after
the half-twist. Thus 
\ignore{
If the quotient
bundle $\pi_{E/A}$ is oriented then
we may choose a topological (not necessarily algebraic)
oriented line subbundle $\pi_A^{\perp}$
of $\pi_E$ such that
the restriction 
of $\pi_E\to\pi_A$ to $\pi_A^{\perp}$
yields an isomorphism of oriented line bundles.
Denote
with $s(A)\in\Ss$ the homology class
of the projectivization of the positively-directed
vectors in such a subbundle. Clearly $s(A)$ does
not depend on such a subbundle. Furthermore,
if both $\pi_A$ and $\pi_{E/A}$ are orientable,
then $F_+$ is a torus and the homology classes
of the positive projectivization of $\pi_A$,
$\pi_A^\perp$ are the same, and also do not depend
on the choice of orientations in
$\pi_A$ and $\pi_A^\perp$. Thus our notation $s(A)$
is consistent.

By contrast, if $F_+$ is a Klein bottle,
then reversing the orientation of the orientable
bundle from the pair $\pi_A$, $\pi_{E/A}$ changes
$s(A)$ to the other element of $\Ss$.
Finally, if both $\pi_A$ and $\pi_{E/A}$
are non-orientable, but $\pi_E$ is oriented
(so that $F_+$ is a torus) then
we use the orientation of $\pi_E$ (and $\R K$)
to subtract a half of the positive twist from
the bundle $\pi_A^\perp$ and arrive to an 
orientable line bundle in $\pi_E$.
Denote with $s(A)\in\Ss$ the homology class
of the positive projectivization of that
subbundle (enhanced with any orientation).

We see that $s(A)\in \Ss$ is defined once
an orientation of one the three bundles
$\pi_A$, $\pi_{E/A}$ or $\pi_E$ is fixed.
}
the class $s(A)$ is well-defined 
once we fix the same orientation data
 as in Corollary \ref{coroor},
i.e. an orientation of one of 
$\pi_A$, $\pi_{E/A}$ or $\pi_E$ depending 
on parity of $d$ and $a$.
Furthermore, a different choice of this
orientation results in the different value for $s(A)$.
\begin{lem}\label{orrs}
Let $\pi_{A_0}$ and $\pi_{A_1}$ be
two line subbundles of $\pi_E$ of the same degree
$a\ge 0$.

We have $\orr_{E,A_0}=\orr_{E,A_1}$
if and only if
$
s(A_0)=
s(A_1).$
\end{lem}
\begin{proof}
Consider generic sections $\xi_{A_0}$
and $\xi_{A_1}$
of $\pi_{A_0}$ and $\pi_{A_1}$
with $a$ 
distinct real zeroes each.
These sections can be deformed into each other
in the class of sections of $\pi_E$
with $a$ distinct real zeroes.
This deformation may be assumed generic.
Each intermediate
section defines a line subbundle
$\pi_{A_t}\subset\pi_E$, and can be decomposed
into the summands $\sigma_{b_t}$
and $\sigma_{c_t}$, $t\in[0,1]$.
If zeroes of $\sigma_{b_t}$ and $\sigma_{c_t}$ do not
collide then both $\orr(E,{A_t})$ and $s(A_t)$
stay invariant.

Suppose that two simple zeroes of $\sigma_{b_t}$ and
$\sigma_{c_t}$ collide at $t=t_0\in [0,1]$
in a point $y\in\R K$ so that the order
of these zeroes
for $t_0-\epsilon$ and $t_0+\epsilon$ gets reversed.
Pass to the fiberwise projectivization
of $\pi_E$ (which is a Hirzebruch surface
$F_{b-c}$). The projectivization of $\pi_{A_t}$
corresponds to a curve intersecting a generic fiber
once. At $t=t_0$ this curve degenerates to
the union of the fiber $F_y$ over $y$ and the residue
curve intersecting $F_y$ 
transversally.
The zeroes of a section of $\pi_{E/A}$
correspond to the intersection points
of this curve with 
another curve resulting from a section of $\pi_E$.
This curve intersects $F_y$ once.
For $t=t_0\pm\epsilon$ the fiber $F_y$ gets added
to the residue curve with different orientation,
so that the orientations of $\pi_{E/A_{t-\epsilon}}$
and $\pi_{E/A_{t+\epsilon}}$ given by Lemma
\ref{olb} are opposite.
The classes $s(A_{t-\epsilon})$ and
$s(A_{t+\epsilon})$ are also different
as the corresponding projectivization
are different by the fiber of $F_+$ over $y$.
\ignore{
Each of these sections decomposes into
the sum of the 
To prove the lemma it suffices to consider
the case when $\xi_{A_2}$ is obtained from
$\xi_{A_1}$ by colliding two simple zeroes
of the summands $\sigma_{b}$ and $\sigma_{c}$
(so that $a_2=a_1+1$).
Any other generic deformation decomposes
into a sequence of such degenerations
(or their inverse)
as well as deformations of $\pi_A$  within
the class of line subbundles of the same
degree which have no effect on $\orr_{E,A}$.
\ignore{
Consider a local orientation of $\pi_E$
over an interval $I\subset\R K$ containing
colliding zeroes of $\sigma_{b}$ and $\sigma_{c}$.
Choose the points $x_0,\dots,x_{a_1}$
and $y_0,\dots,y_{d-a_1}$ responsible for
the ..
By our construction
} 
The orientation $\orr_{E,A_1}$ is given according
to Lemma \ref{olb} by $\xi_{A_1}$ as well
as an appropriate section of $\pi_{E/A_1}$.
Zeroes of $\xi_{A_1}$ correspond to common
zeroes of $\sigma_b$ and $\sigma_c$.
}
\end{proof}
\begin{coro}\label{orrsab}
Let $\pi_A$ and $\pi_B$ be line subbundles of $\pi_E$
of degrees $a\ge 0$ and $b\ge 0$. There exists a sign
$\sigma(a,b)=\pm 1$ depending only on the numbers $a$ and $b$
such that 
$\orr_{E,A}=\sigma(a,b)\orr_{E,B}$
if and only if $s(A)=s(B)$.
\end{coro}
\begin{proof}
Define $\sigma(a,b)$ so that $\orr_{E,A}=\sigma(a,b)\orr_{E,B}$
holds for two particular line subbundles $\pi_A$ and $\pi_B$ 
of degree $a$ and $b$.
This sign
depends only on $a$ and $b$ by Lemma \ref{orrs}.
\end{proof}

\begin{rmk}
Note that a $Spin$-structure on
an orientable 2-dimensional vector
bundle $\pi_E$ is a choice
of an orientation of $\pi_E$ together with
a bijective map $\Ss\mapsto\{\pm 1\}$.
In the case if $\pi_E$ is non-orientable,
a choice of bijection $\Ss\mapsto\{\pm 1\}$
is known as a $Pin_-$-structure.
(This interpretation of $Spin$ and $Pin_-$-structures
holds for any real rank 2 vector
bundle over a topological circle.)

Thus Lemma \ref{orrs} simply states that for
a given $a$
the
orientation of $\Gamma(\pi_E)$ is determined
by the $Spin$ or $Pin_-$ structure on $\pi_E$
as long as a subbundle $\pi_A$ of degree $a$
in $\pi_E$ exists.
Namely, we take $\orr_{E,A}$ 
if the value of $Spin/Pin_-$-structure on $s(A)$
is positive and $-\orr_{E,A}$ otherwise.
In the case if
$\R K=\R \tilde C$,
and $\pi_E$
is the pull-back of the tangent bundle
the $Spin/Pin_-$-structure is given by 
the logarithmic trivialization of 
$\ctor\subset\C\Delta$.
This connection
can be traced in the proof below (even
though the proof does not use it explicitly).
\end{rmk}
\begin{proof}[Proof of Proposition \ref{wlem}]
Consider the connected component
${\mathcal C}$.
%
The tangent vector fields
$(z,w)\mapsto z\frac{\dd}{\dd z}$
and $(z,w)\mapsto w\frac{\dd}{\dd w}$
on $\ctor$ are non-vanishing, and
extend to (possibly vanishing on $\dd\C\Delta$) 
holomorphic vector fields on 
$\C \Delta$ away from the
finite number of intersections of toric divisors
in $\C \Delta$.
In particular, for $\R C^\circ\in{\mathcal C}$
these vector fields provide sections
for the pull-back $\pi_{E^C}$ of the
tangent bundles
$T\C\Delta$ to the normalization $\R\tilde C$ of 
$\R \bar C$.
Therefore, the bundles $\pi_{E^C}$ are generated
by global sections.
Furthermore, one of these vector fields,
say $z\frac{\dd}{\dd z}$, defines a line subbundle
$\pi_{A^C}\subset \pi_{E^C}$ well-defined for all $\R C^\circ\in\MM_\Delta$.

The number of positive (adjacent to
the positive quadrant $(\R_{>0})^2$)
and negative points of $\PP_t\subset\dd\R\Delta$
contained in a divisor $\R E\subset\dd\R\Delta$
is determined by
$\lambda$ and is invariant of $t$.
If
$\R E\cap\dd{(\R_{>0})^2}
\cap\PP_t\neq\emptyset$
then we choose a reference point
$$p^+_t\in\R E\cap\PP_t$$
as the first point on $\R E\cap\dd{(\R_{>0})^2}$
in the linear order induced by the clockwise
orientation of $\dd{(\R_{>0})^2}$.
Note that by the definition of $\MM_\Delta$
the $m$ points of $\PP_t$ are always distinct.
Thus $p_+\in\R \bar C$ is a consistent choice
of a reference point for $\R C^\circ\in \MM_\Delta$.

Furthermore, at $p_+$ we have a canonical choice
of a local orientation of $\R\Delta$ (coming
from the standard orientation of $\R_{>0}^2$).
The orientations of $\pi_{A^C}$ and
$\pi_{E^C}/\pi_{A^C}$
in the positive quadrant and thus at $p_+$
are also standard.
Corollary \ref{coroor} provides an orientation of 
the bundle over $\MM_\Delta\ni \R C^\circ$ formed by
$\Gamma(\pi_{E^C})$, the space of 
sections of the 2-dimensional bundle
$\pi_{E^C}$ over the normalization $\R \tilde C$.

The tangent space to $\MM_\Delta$
at $\R C^\circ$ can be identified with
the quotient of the vector space
$\Gamma(\pi_{E^C})$ by its 3-dimensional subspace
$\Gamma_B$
obtained as the image of the tangent vector
fields on $\R\tilde C$ in the tangent bundle
of $\R\Delta$.
In the case when $\R \bar C$ is immersed
we have $\Gamma_B=\Gamma(\pi_{E^B})$,
where $\pi_{E_B}\subset
\pi_{E^C}$ is the tangent bundle to $\R \tilde C$.
The orientation of $\Gamma_B$ is
provided by Lemma \ref{olb}.
The orientation of $\MM_\Delta$
(i.e. the orientation of its tangent bundle)
is consistently determined by this quotient.

If no points of $\R\bar C\cap\dd\R\Delta$
are adjacent to the positive
quadrant then $\R\tilde C$ is mapped to the same 
quadrant by the normalization map
$\R\tilde C\to \R\bar C$. The standard orientation
of this quadrant induces a consistent orientation
of the bundle $\pi_{E^C}$ which has an even degree
in this case.
Thus Corollary \ref{coroor} provides (once again,
through the quotient by $\Gamma_B$)
an orientation of the corresponding
components of $\MM_\Delta$ in this case as well.

Recall that the space $\MM_{\dd\Delta}$ consists
of the $\conj$-invariant $m$-tuples of distinct
points in $\dd\C\Delta$. Thus it
is oriented
by the standard (clockwise with respect to the
positive real quadrant) orientations of
real toric divisors and the order 
of points on these divisors corresponding
to these orientations.
Indeed, this yields an order for all real points
of the $m$-tuple.
Each conjugate pair of imaginary points is contained
in $\dd\C\Delta\setminus\dd\R\Delta$.
Exactly one point in each pair is contained 
in a connected component $E^+
\subset\dd\C\Delta\setminus\dd\R\Delta$
whose complex orientation
agrees with the chosen orientation on $\dd\R\Delta$.
The conjugate pair is determined by this
{\em representative} point.
The orientation of $\MM_\Delta$ comes as the product
of the orientations of $\dd\R\Delta$
(in the induced order of the real points of the
$m$-tuple) and the complex orientations of $\dd\C\Delta$
at the representative points of the conjugate pairs.
Note that the relative order of conjugate pairs
is irrelevant for the product orientation.

Thus the map \eqref{Mmap} is a map between
oriented manifolds.
To prove \eqref{locW} it suffices to show that
the local degree of the map \eqref{Mmap} at
$\R C^\circ\in\MM_\Delta$ agrees with
$\sigma(\R C)$ (up to a global sign on $\mathcal C$)
in the case when $\ev(\R C^\circ)$ is a regular value.

Note that the tangent space
$T_{\ev(\R C^\circ)}\MM_{\dd\Delta}$
is identified with the
tangent space $T_{\R C^\circ}\MM_{\Delta}$
by the differential of the map \eqref{Mmap} 
at its regular point $\R C^\circ$, we get
\begin{equation}\label{3spaces}
T_{\ev(\R C^\circ)}\MM_{\dd\Delta}=
T_{\R C^\circ}\MM_{\Delta}=
\Gamma(\pi_{E^C}/\pi_{B^C}).
\end{equation}
The orientation of $\Gamma(\pi_{E^C}/\pi_{B^C})$
given by Lemma \ref{olb}
differs from
the
orientation of $T_{\ev(\R C^\circ)}\MM_{\dd\Delta}$
according to the cyclic order
of $\R\tilde C\cap\dd\R\Delta$
at $\R\tilde C$.
Thus these orientations agree or disagree
uniformly 
on ${\mathcal C}$.

To compare the orientations of
$T_{\R C^\circ}\MM_{\Delta}$ and 
$T_{\ev(\R C^\circ)}\MM_{\dd\Delta}$
we use the identification \eqref{3spaces}.
Each of these orientations corresponds to an orientation
of the vector space $\Gamma(\pi_{E^C})$
induced 
by the presentation
$\Gamma(\pi_{E^C}/\pi_{B^C})=\Gamma(\pi_{E^C})/
\Gamma(\pi_{B^C})$ (which holds in the case when $\R\bar C$ is immersed in $\R\Delta$).
One orientation
is defined by the subbundle 
$\pi_{E^A}\subset\pi_{E^C}$, the other one
by $\pi_{B^C}\subset\pi_{E^C}$.
By Corollary \ref{orrsab} these orientations agree or not
depending on the values
$s(E^A),s(E^B)\in{\mathcal S}$.

Note that the pullbacks of the tangent bundle of $\R\Delta$
to $\R \tilde C$ are canonically isomorphic as
vector bundles over $\R \tilde C$ (treated as a topological
space) for all $\R C^\circ\in{\mathcal C}$.
Thus we may identify the two-element sets ${\mathcal S}$
for different curves in ${\mathcal C}$.
By the definition of $\pi_{A^C}$ the elements
$s(E^A)$ agree under this identification.
In the same time the value of $s(E^B)$
depends on $\operatorname{Rot}_{\Log}(\R C)$.
Since each half-turn
at the positive projectivization $F_+$ of $\pi_E^C$
corresponds to a full turn in the projectivization of $\pi_{E^C}$,
the value of $s(B^C)\in{\mathcal S}\subset H_1(F_+;\Z_2)$
is determined
by the mod 4 residue of $\operatorname{Rot}_{\Log}(\R C)$
and thus by $\sigma(\R C)$.
\ignore{
In the case when $\R C$ is immersed
we have $\Gamma_{B^C}=\Gamma(\pi_{B^C})$
where $\pi_{B^C}$ is the line subbundle of $\pi_{E^C}$
tangent to $\C \tilde C$.
The family of
vector subspaces
$\Gamma_{B^C}\subset \Gamma(\pi_{E^C})$,
$\R C^\circ\in{\mathcal C}$
is smooth. The dimension of $\Gamma_{B^C}$ is 3.
The quotient space $\Gamma(\pi_{E^C})/\Gamma_{B^C}$
is the tangent space of $\MM_{\Delta}$ at $\R C$
(as $\MM_{\Delta}$ is composed of rational curves
without a preferred parameterization).
Since $\R\tilde C$ is oriented,
the orientation of $\Gamma_{B^C}=
\Gamma(\pi_{B^C})$ is determined by
Lemma \ref{olb}
in the case if $\R C$ is
immersed. However, by ..

As in \eqref{Gs} we get
$$0\to\Gamma(\pi_{B^C})\to
\Gamma(\pi_{E^C})\to
\Gamma(T_C(\MM_\Delta))\to 0$$
for an immersed $\R C$.
The vector space $\Gamma(\pi_{B^C})$ is oriented
by Lemma \ref{olb} while the orientation
of $\Gamma(\pi_{E^C})$ is given by Corollary
\ref{coroor} as $\orr_{E^C,A^C}$,
where $A^C$ is
the pull-back of the vector bundle defined
by the vector field $z\frac{\dd}{\dd z}$.
%

The space $\MM_{\dd\Delta}$ gets oriented
once we fix orientations on the real toric divisors
(which we may assume to be clockwise
oriented with respect to the positive quadrant).
The cyclic order of $\R\tilde C\cap\dd\R\Delta$
at $\R\tilde C$
is the same
for all $\R C^\circ\subset{\mathcal C}$.
Thus the orientation of
$T_{\ev(C)}\MM_{\dd\Delta}$ is
consistent (globally over $\mathcal C$)
with the orientation of $\Gamma(\pi_{E^C/B^C})$
defined by Lemma \ref{olb}.
Therefore, the local degree of \eqref{Mmap}
at an immersed curve $\R C$ is determined by
$\orr_{E^C,B^C}$. By Lemma \ref{orrs} this 
orientation in its turn is
determined by $s(B^C)\in H_1(F_+)$. Since each half-turn
at $F_+$ corresponds to a full turn in the projectivization of $\pi_E$, the value of $s(B^C)$ is determined
by the mod 4 residue of $\operatorname{Rot}_{\Log}(\R C)$
and thus by $\sigma(\R C)$.
Invariance of 
the degree of the map \eqref{Mmap} along the component
${\mathcal C}$ now implies the proposition.
}
\end{proof}

\ignore{
Consider the total space $\C F$
of the fiberwise projectivization
of $\pi_E$. It is a $\cp^1$-bundle over
$\cp^1$ and thus is a Hirzebruch surface
$F_n$ for some $n\ge 0$ (where $-n$
is the self-intersection of the exceptional section).
The total space $G\subset F$ of the 
fiberwise projectivization of $\pi_A$ is
a curve in the surface $F$.
Both, $F$ and $G$ are defined over $\R$ so we
may consider its real loci $(\R F,\R G)$ as
well as their complexifications $(\C F,\C G)$.
Recall that $\pi_E$ is isomorphic to a sum
of two line bundles of degrees $b,c$, $b\ge c\ge 0$.
Denote
$$
\Delta_{b,c}=\operatorname{Convex\ Hull}
\{(0,0),(b,0),(1,0),(1,c)\}.
$$
\begin{lem}
The surface $\C F$ is isomorphic
(equivariantly over $\R$) to the toric
surface $\C \Delta_{b,c}$ which in its turn is 
isomorphic to the Hirzebruch surface $F_{c-b}$. 
The curve $\C G$ belongs to the linear system
in $\C \Delta_{b-a,c-a}$ defined by $\Delta_{b-a,c-a}$.
\end{lem}
In other words, a generic curve from the
linear system $|G|$ has the Newton polygon
$\Delta_{b-a,c-a}$. Note that $\C \Delta_{b-a,c-a}=
\C \Delta_{b,c}$ (as an unpolarized
complex toric surface).
\begin{proof}
\ignore{
The surface $\C F$ is a $\cp^1$-bundle over $\cp^1$,
and thus is a Hirzebruch surface $F_n$.
Such a surface has a unique holomorphic section
of negative self-intersection
if $n>0$ and a pencil of holomorphic sections
of zero self-intersection if $n=0$.
The number $-n$ can be recovered as
the self-intersection of a section of $F$
as long as it is non-positive.
}
Consider the 
curves
$\C E_b,\C E_c\subset\C F$
given by the direct summand subbundles
$\pi_{E_b}$ and $\pi_{E_c}$ of $\pi_E$
(of degrees $b$ and $c$ respectively).
The intersection number of the homology
classes $[\C E_b],[\C E_c]\in H_2(\C F)$ 
is zero. Furthermore, the intersection
number of both $[\C E_b]$ and
$[\C E_c]$ with the fiber of $\C F$
is 1, thus $[\C E_b]-[\C E_c]$ is proportional
to the fiber homology class and
$[\C E_b]^2=[\C E_c]^2=\pm n$.

Consider a section of $\pi_A$ without multiple
zeroes.
It is a sum of
two sections of $\pi_{E_b}\subset\pi_E$
and $\pi_{E_c}\subset\pi_E$
with $a$ common zeroes. Projecting
the total space $[\C A]\subset\C E$ 
of the subbundle to the two summands we get
$[\C E_a].[\C E_b]=c-a$, $[\C E_a].[\C E_c]=b-a$
for the projectivization $\C E_a$ of $\C A$,
while $n=b-c$. 
\end{proof}

Thus two line subbundles in $\pi_E$
of the same degree $a$
correspond to two smooth curves with 
the Newton polygon $\Delta_{b-a,c-a}$. 
Note that a singular curve in this linear
system must be ..
}

\ignore{
A continuous deformation of $\pi_A$ within subbundles
of $\pi_E$ of the same degree
leaves
$\orr_{E,A}$ unchanged.
To study further dependence of $\orr_{E,A}$
on $\pi_A$ it is convenient to consider
the projectivization of the pair $(\R E,\R A)$.
Let $\Delta_E$ be the trapezoid ..
\begin{lem}
The projectivization of $\R E$ is
the toric surface ..Hirzebruch surface
\end{lem}
}


\ignore{
Thus, $\ev^{-1}(\gamma)$ splits to a disjoint union of components
according to the quantum index $k$.
Furthermore, we may deduce that
$R_{\Delta,k}(\PP)=R_{\Delta,k}(\PP')$
from the Welschinger theorem \cite{We}.
Namely, in the case when all curves of
$\RR_{\Delta,k}(\PP_t)$ are irreducible
we deduce from \cite{We} an even stronger statement
(used also in the proof of Theorem \ref{tildeR}):
{\em for each connected component $\mathcal C$ of the family
$\RR_{\Delta,k}(\PP_t)$ of curves from $\MM_\Delta$
we have}
\begin{equation}\label{locW}
\sum\limits_{\R\bar C\in\RR_{\Delta,k}(\PP_0)
\cap{\mathcal C}}\sigma(\R C)=
\sum\limits_{\R\bar C\in\RR_{\Delta,k}(\PP_1)
\cap{\mathcal C}}\sigma(\R C).
\end{equation}

For such a deduction we use the Welschinger theorem only in
the case of real curves of bidegree $(a,b)$, $a,b\in\Z_{>0}$,
in $\rp^1\times\rp^1$. In this case Theorem 2.1 of \cite{We}
asserts that if $\PP_{ab}$ and $\PP'_{ab}$ are
two generic configurations of $2a+2b-1$ points in $\rp^1\times\rp^1$,
and $\RR_{ab}(\PP_{ab})$ (resp. $\RR_{ab}(\PP'_{ab})$) is the set
of real rational curves of bidegree $(a,b)$
passing through $\PP_{ab}$ (resp. $\PP'_{ab}$)
then  $\RR_{ab}(\PP_{ab})$ (resp. $\RR_{ab}(\PP'_{ab})$)
is finite, consists of immersed irreducible nodal curves, and
\begin{equation}\label{locWab}
\sum\limits_{\R \bar C_{ab}\in \RR_{ab}(\PP_{ab})}{w(\R \bar C_{ab})}
= \sum\limits_{\R \bar C_{ab}\in \RR_{ab}(\PP'_{ab})}{w(\R \bar C_{ab})}.
\end{equation}
Here $w(\R \bar C_{ab})=\pm 1$ is positive if the number of isolated
real points of $\R \bar C_{ab}$ is even and negative otherwise.
Furthermore, the proof of Theorem 2.1 of \cite{We}
implies that for any generic path $\gamma_{ab}$
connecting  $\PP_{ab}$ and $\PP'_{ab}$ in the
space of configurations of $2a+2b-1$ points in $\rp^1\times\rp^1$
the subspace of rational curves 
the space $\RR_{ab}(\gamma_{ab})$ consisting of 
pairs $(\R \bar C_{ab},\QQ_{ab})$ such that $\QQ_{ab}\in\gamma_{ab}$ and
$\R \bar C_{ab}$ is a stable oriented
real rational curves of bidegree $(a,b)$ in $\rp^1\times\rp^1$
passing through $\QQ_{ab}$,
is an oriented 1-cobordism from  $\RR_{ab}(\PP_{ab})$ to
$\RR_{ab}(\PP'_{ab})$.
Here the orientation of $\RR_{ab}(\gamma_{ab})$
is induced from $\gamma$ by the forgetting map
$\ev_{\gamma_{ab}}:R_{ab}(\gamma_{ab})\to\gamma_{ab}$
near an immersed rational curve $\R \bar C_{ab}$
with $w(\R \bar C_{ab})=1$, and
is opposite to the orientation induced in this way near
an immersed rational curve $\R \bar C_{ab}$
with $w(\R \bar C_{ab})=-1$.
In other words, the law of conservation of the signed number of real 
curves of bidegree $(a,b)$ in $\rp^1\times\rp^1$ passing
through $\PP_{ab}$ is local.

%
Consider the curve $\R \widehat C\subset\rp^1\times\rp^1$
obtained as the topological closure of $\R C^\circ$
for $\R C\in \RR_{\Delta,k}(P_t)\cap{\mathcal C}$.
The Newton polygon $\Delta_{ab}$
of $\R \widehat C$ is 
a rectangle circumscribing $\Delta$. 
Up to translation we have $\Delta_{ab}=[0,a]\times[0,b]$.
Branches of $\R \widehat C$ near $\dd(\rp^1\times\rp^1)$
are described by 
the sides of $\Delta$.
Points of $\R \widehat C\cap(\{0,\infty\}\times\R^\times)$
(resp. of  $\R \widehat C\cap(\R^\times\times\{0,\infty\})$)
correspond to vertical (resp. horizontal) sides
of $\dd\Delta$ while 
sides of $\dd\Delta$
that are neither
horizontal nor vertical correspond to branches
of $\R \widehat C$ passing through a point of 
$\{0,\infty\}\times\{0,\infty\}$.

We deduce \eqref{locW} from \eqref{locWab} inductively 
reducing the size of the complement
$\Delta_{ab}\setminus\Delta$ which can be measured
numerically as $2a+2b-m$.
The base of the induction is the case $\Delta=\Delta_{ab}$,
i.e. $m=2a+2b$. 
Our component $\mathcal C$
determines a Menelaus configuration
$\widetilde{\PP_t}\subset\dd\R\Delta$
of $m$ points such that 
$\Fr^{\Delta}(\tilde\PP_t)=\PP_t$ and curves from
${\mathcal C}\cap\RR_{\Delta,k}(\PP_t)$ pass through
$\widetilde\PP_t$ for each $t$.
Let $\gamma_{ab}$ be a path of configurations of $2a+2b-1$ points
in $\rp^1\times\rp^1$ obtained by removing
one point from the family of Menelaus configurations
$\widetilde\PP_t$, and perturbing
it slightly to a generic configuration in $\rp^1\times\rp^1$
(no longer contained in $\dd(\rp^1\times\rp^1)$).
The path $\gamma_{ab}$ connects generic configurations
$\PP_{ab}$ and $\PP'_{ab}$ close to $\widetilde\PP$ and 
$\widetilde\PP'$ (after adding the $m$th point determined
by the Menelaus condition).
The component ${\mathcal C}$ gives a component of the
cobordism connecting $\RR_{ab}(\PP_{ab})$ to
$\RR_{ab}(\PP'_{ab})$ that consists of irreducible curves
close to ${\mathcal C}$ (in particular, far from
the curves of bidegree $(a,b)$ containing components
of $\dd(\rp^1\times\rp^1$). Thus \eqref{locW} follows
from $\eqref{locWab}$ with the help of \eqref{sigma-w}.

Let $\Delta\neq\Delta_{ab}$.
\ignore{
By induction on $2a+2b-m$

We may slightly deform $\R \widehat C$ to a curve
$\R \widehat C'\subset \rp^1\times\rp^1$
transversal to $\dd(\rp^1\times\rp^1)$.
To do this we consider $\C \widehat C$ as the
image of the Riemann surface 
$\C \tilde C$ 
under the map defined by two coordinate functions
with zeroes and poles
at $\dd\C\tilde C$.
Each point $p\in\dd\R\tilde C=\dd\C\tilde C$
is thus associated two numbers,
$n_x$ and $n_y$: the order of zero (or pole) of the first and of the second
coordinate function at $p$.
These numbers can be read from the slope of the corresponding
side of $\Delta$.
Namely, $n_y$ is the absolute value of the first
coordinate of the primitive vector parallel to that side
while $n_x$ is the absolute value of the second coordinate.  
To deform $\R \widehat C$ to $\R \widehat C'$ 
we replace $p$ 
with $n_x+n_y$ generic points in a small neighborhood of $p$
in  $\R \tilde C$
and modify the map so that the first 
coordinate function in $\rp^1\times\rp^1$
has a simple zero (or pole) at $n_x$ of these points
while the second coordinate function has a simple zero (or pole)
at the remaining $n_y$ of these points.
}
Inclusion $\Delta\subset\Delta_{ab}$ yields
a birational transformation
\begin{equation}
\label{bitr}
\tau:\C\Delta\dashrightarrow\cp^1\times\cp^1
\end{equation}
regular near all points of $\PP$.
Consider a family of points
$p(t)\in\PP_t$ that belong to a divisor corresponding
neither vertical nor horizontal side of $\dd\Delta$,
we have $\tau(p(t))\in\{0,\infty\}\times\{0,\infty\}$.
Without loss of generality (using symmetries of 
$\rp^1\times\rp^1$) we may assume that 
$\tau(p(t))=(0,0)$.
The restriction of the first (resp. second) coordinate
of $\rp^1\times\rp^1$ to the
normalization of any curve in the component ${\mathcal C}$
passing through $p(t)$ has a zero of order $n_x$
(resp. $n_y$) at the corresponding point
$\tilde p(t)\in\dd\C\tilde C$,
where $(-n_y,n_x)$ is a primitive integer
vector parallel to the corresponding side of $\dd\Delta$,
$n_x,n_y>0$.
Inside $\rp^1\times\rp^1$ we may perturb
$\R \widehat C$ to a curve $\R \widehat C'$
perturbing slightly all points of $\dd\C\tilde C$ except for
$\tilde p(t)$ and splitting $\tilde p(t)$ to two points
$\tilde p_{(1,0)}(t)$ and $\tilde p_{(n_x-1,n_y)}(t)$.
The first coordinate is required to have a simple zero
at $\tilde p_{(1,0)}(t)$ and a zero of order $n_x-1$
at  $\tilde p_{(n_x-1,n_y)}(t)$.
The second coordinate is required to have a non-zero value
at $\tilde p_{(1,0)}(t)$ and a zero of order $n_y$
at $\tilde p_{(n_x-1,n_y)}(t)$. The rest of the points
of $\dd\C\tilde C$ keep the orders of zeroes of coordinates
unchanged.

We have $m'=m+1$ for the Newton polygon $\Delta'$
of $\R \widehat C'$.
Furthermore, the birational transformation $r=(\tau')^{-1}\circ\tau$
between $\C\Delta$ and $\C\Delta'$ (see \eqref{bitr})
is biregular near $\PP_t\setminus\{p(t)\}$.
Consider a family of
Menelaus configurations $\PP_t^{\Delta'}\subset\dd\R\Delta'$
obtained from $r(\PP_t\setminus\{p(t)\})$
by adding to it a point $p_{(1,0)}$ such that
$\tau'(p_{(1,0)})\in \R^\times\times\{0\}\subset\rp^1\times\rp^1$
close to $(0,0)$ while the remaining point $p_{(n_x-1,n_y)}(t)$
is determined by the Menelaus condition.
Reversing the roles of the first and the second coordinate
(i.e. of $n_x$ and $n_y$)
if needed we may assume that 
$GCD(n_x-1,n_y)$ is not divisible by two, and thus the resulting
configuration is necessarily real.
Also it is convenient to assume that $n_x>1$ unless 
$n_x=n_y=1$.
By induction we may assume that
\eqref{locW} holds for $\Delta'$.
(if $(n_x-1,n_y)$ is not primitive 
the curves from $\RR_{\Delta',k}(\PP^{\Delta'}_t)$ have
a tangency of order $GCD(n_x-1,n_y)$ to
$\dd\R\Delta'$ at $p_{(n_x-1,n_y)}(t)$).
We have a 1-1 correspondence between curves from
$\RR_{\Delta,k}(\PP_t)\cap{\mathcal C}$ and
a part of $\RR_{\Delta',k}(\PP^{\Delta'}_t)$
close to $\RR_{\Delta,k}(\PP_t)\cap{\mathcal C}$.
Corresponding curves have the same number of elliptic nodes
as the ends of $\R\tilde C^\circ$ 
are either real (and thus their real intersection points
are hyperbolic) or intersect the boundary divisors in
purely imaginary points.
Thus  \eqref{locW} holds for $\Delta$.
}

\ignore{
Each $p\in\PP_t$ belongs to a toric divisor of $\R\Delta$
and thus corresponds to a side of $\Delta$,
so that $n_x$ and $n_y$ can be defined as above.
Replace the point $p$ with
$n_x$ points at $\{0,\infty\}\times\R^\times$
and $n_y$ points at $\R^\times\times\{0,\infty\}$
close to $\tau(p_j(t))$ and such that 
$$\prod\limits_{l=1}^{n_x+n_y}\rho(p_{j,l}(t))=\rho(p_j(t))$$
for the momenta $\rho(p_{j,l}(t))$ of the resulting
$n_x+n_y$ points $p_{j,l}(t)\in\dd\rp^1\times\rp^1$ and the 
momentum $\rho(p_j(t))$ of the point $p_j(t)\in\dd\R\Delta$
defined in section \ref{me51}.
We obtain 
The resulting configuration consists of $2a+2b$ points
on $\dd\rp^1\times\rp^1$ subject to the Menelaus condition.
Remove one of these points
and perturb the rest slightly to obtain a generic 
configuration of $2a+2b-1$ points in $\rp^1\times\rp^1$
(no longer contained in $\dd\rp^1\times\rp^1$).
Denote with $\PP_{{ab},t}$ the path of generic configurations
in $\rp^1\times\rp^1$ resulting from $\PP_t$ in this way.

Consider the subset of ..

The resulting curve $\R\widehat C'\subset\R\Delta'=\rp^1\times\rp^1$
has Newton polygon $\Delta'\supset\Delta$ (which is a rectangle).
Denote the critical values of the two coordinate functions
on $\R \bar C$ (resp $\R\widehat C'$) with $Z_x$ and $Z_y$
(resp. $\hat Z_x'$ and $\hat Z_y'$).
Generically, all these critical values are of order 1, so that
$\#(Z_x')=2d_x-2$ and $\#(Z_y')=2d_y-2$.
The set $Z_x'$ (resp $Z_y'$) is obtained from $Z_x$ (resp. $Z_y$)
by a small perturbation (in the circle $\R \tilde C$) as well
as adding $n_x-1$ (resp $n_y-1$) points near the corresponding point of $\dd\R\tilde C$
in the prescribed relative position.
Thus the number of {\em hyperbolic nodes}, i.e. the points of self-intersection
of $\R \widehat C'$, is the same as
for $\R C^\circ$.

The set $\Fr^\Delta(\C \widehat C')\cap\dd\R\Delta'$ minus one point (determined by the Menelaus
condition) can be further perturbed to a generic set $\QQ\subset\R\Delta'=\rp^1\times\rp^1$.
Similarly the path $\PP_t$ produces a generic path $\QQ_t$
so that the curves from $\RR_{\Delta,k}(\PP_t)$ can be perturbed to
real rational curves with the Newton polygon $\Delta'$ such that
their images under $\Fr^\Delta$ pass through $\QQ_t$.
The Welschinger signs of the resulting curves in $\rp^1\times\rp^1$
determine the sign $\sigma$ for the curves from $\RR_{\Delta,k}(\PP_t)$
since the Welschinger sign for a given $\Delta'$ can be expressed
through the number of hyperbolic points and the index $k$ is locally invariant
for deformation in the class of irreducible curves.
Therefore, by the local invariance of Welschinger count
\cite{We}, we have
$R_{\Delta,k}(\PP)=R_{\Delta,k}(\PP')$
in this case.
\ignore{
Note that the Welschinger sign of $\R \widehat C'$ can be expressed
as the $\Z_2$-linking number of $Z_x'$ and $Z_y'$ (which
are $\Z_2$-homologous to zero since they consist of even number
of points) inside the circle $\R \hat C'$.
Indeed, if
}
}

Suppose now that
\begin{equation}\label{Rbig}
\RR_{\Delta}(\PP_{t})=\bigcup_{k'=-\Area(\Delta)}^{\Area(\Delta)}\RR_{\Delta,k'}(\PP_t)
\end{equation}
contains a reducible curve $\R\bar D$ for $t=t_0$.

As the dimension of the space of deformations of each component is equal
to the number of points in its intersection with $\dd\C\Delta$ minus 1,
for the generic path $\gamma$ (in the
non-singular space $\MM_\Delta$)
the curve $\Fr^\Delta(\C\bar D)$ is the union of two
irreducible rational immersed curves $\Fr^\Delta(\C\bar D_j)$, $j=1,2$. These two curves define a subdivision
of the Menelaus
configuration $\PP_t$ (which is no longer generic
even among Menelaus configurations for this particular
value of $t$)
into the disjoint union of two real generic Menelaus
configurations
$\PP^{(j)}_t=\C\bar D_j\cap\dd\Delta$, $j=1,2$.
Thus each $\C\bar D_j$ is defined over $\R$.
Also, the presence of multiplicative translations in $\rtor$
implies that $\Fr^\Delta(\R\bar D_1)$ and $\Fr^\Delta(\R\bar D_2)$
intersect transversally.

We may assume that all points $\Fr^\Delta(\PP_t)$ except for two points
$p_j(t)\in \R\bar D_j$, $j=1,2$,
remain independent of $t\in [t_0-\epsilon,t_0+\epsilon]$
for a small $\epsilon>0$.
For $t\in [t_0-\epsilon,t_0+\epsilon]$ the deformation $p_1(t)\in\dd\R\Delta$
determines the deformation $p_2(t)$ by
the Menelaus condition. The points 
$p_1(t_0)$ and $p_2(t_0)$ must belong to
two different components of $\R\bar D$.
Indeed, otherwise one of the component of $\R\bar D$
contains only points of $\PP_t$ invariant of $t$
and then  
$\RR_{\Delta}(\PP_t)$ contains reducible curves
for all $t\in [t_0-\epsilon,t_0+\epsilon]$
which contradicts to our assumption.
Denote with $\RR^{\R D}_\Delta(\PP_t)$ for
$t_0-\epsilon\le t\le t_0+\epsilon$
the curves whose images under $\Fr^\Delta$ is close to $\Fr^\Delta(\R \bar D)$.

Let us choose some orientations of $\R \bar D_1$ and $\R \bar D_2$.
Then the intersection points of
\begin{equation}\label{I}
I = \Fr^{\Delta}(\R\bar D_1)\cap \Fr^{\Delta}(\R\bar D_2)
\end{equation}
come with the intersection sign in $\R_{>0}^2$.
The set of positive points $I_+\subset I$ has the same cardinality as
the set of negative points $I_-\subset I$, $I=I_+\cap I_-$
as $\R_{>0}^2$ is contractible.

The curves in $\RR_{\Delta}^{\R D}(\PP_{t_0\pm\epsilon})$
are obtained by smoothing a nodal point
$q\in I$
in one of the two ways,
one that agrees with our choice of orientation and one that does not.
Without loss of generality (changing the direction of the path $\gamma$
if needed) we may assume that the orientation-preserving
smoothing in 
a point $q\in I_+$ corresponds to a curve
$\R \bar D_{q,+}\in\RR_{\Delta}(\PP_{t_0+\epsilon})$
and thus the orientation-reversing smoothing at the same point corresponds to a curve
$\R \bar D_{q,-}\in\RR_{\Delta}(\PP_{t_0-\epsilon})$.
The following lemma determines the situation at all the other points of $I$.
\begin{lem}\label{sign}
A curve obtained by the smoothing of a node from $I_+$ in the orientation-preserving
way, or by the smoothing of nodes from $I_-$ in the orientation-reversing way belongs to
$\RR_{\Delta}(\PP_{t_0+\epsilon})$, $\epsilon>0$.
\end{lem}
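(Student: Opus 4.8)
The plan is to analyze the local model of the wall-crossing at $t=t_0$ and to track the quantum index $k$ and the sign $\sigma$ through each of the two smoothings at a node $q\in I$. The key observation is that both the quantum index and the sign $\sigma(\R C)$ depend only on data that varies continuously (or is locally constant) as long as the family of curves stays irreducible and immersed, so the question reduces entirely to a computation at the reducible curve $\R\bar D=\R\bar D_1\cup\R\bar D_2$ and to comparing the two resolutions of a single node in $I$. First I would set up the standard picture: near $q$ the curve $\Fr^\Delta(\R\bar D)$ looks like a pair of transversally intersecting real branches in $\R_{>0}^2$, and there are exactly two ways to smooth the node; each smoothing produces, for a small perturbation of the marked point data, a curve in $\RR_\Delta(\PP_{t_0\pm\epsilon})$ for one sign of $\pm\epsilon$ and not the other. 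Which sign occurs is governed by the relative orientation of the two branches at $q$ versus the choice of smoothing — this is precisely the dichotomy $I_+$ vs.\ $I_-$.

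The core of the argument is to compute how $\sigma$ changes under each smoothing. Using \eqref{sigma} and Theorem \ref{thm-lrot}, $\sigma(\R C)=(-1)^{(m-\lrot(\R C))/2}$, so I need to understand the jump in $\lrot$ (equivalently, via \eqref{sigma-w}, the jump in the Welschinger sign, which is $(-1)^{E(\R C)}$, i.e.\ the parity of the number of solitary real nodes). The standard local computation for smoothing a hyperbolic versus an elliptic (solitary) node — exactly the ingredient behind Welschinger's invariance \cite{We} — shows that the orientation-preserving resolution of a node of one intersection sign and the orientation-reversing resolution of a node of the opposite intersection sign produce curves that sit on the same side $t_0+\epsilon$ of the wall. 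Concretely: for $q\in I_+$ the orientation-preserving smoothing lands in $\RR_\Delta(\PP_{t_0+\epsilon})$ by the normalization chosen just before the lemma; I would then check that switching simultaneously the intersection sign of the node (from $I_+$ to $I_-$) and the smoothing type (from orientation-preserving to orientation-reversing) preserves membership in $\RR_\Delta(\PP_{t_0+\epsilon})$. This is a parity statement: each of the two switches flips the side of the wall, so doing both flips it twice and returns to $t_0+\epsilon$.

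To make this precise I would use the complex-orientation structure: a smoothing of a node of $\Fr^\Delta(\R\bar D)$ respecting the chosen orientations of $\R\bar D_1$ and $\R\bar D_2$ corresponds to a specific choice of the component $S$ for the smoothed curve (hence a specific complex orientation), while the opposite smoothing forces the complex orientation of one piece to reverse. Tracking the logarithmic Gauss degree $\lrot$ through the local model near $q$ — one branch over the other, with the two branches carrying prescribed tangent directions — one sees that $\lrot$ changes by an amount whose parity is determined jointly by the intersection sign at $q$ and the smoothing type, in such a way that the product (intersection sign)$\times$(smoothing type) controls the side of the wall. Since $m$ is fixed along $\gamma$, the parity of $(m-\lrot)/2$, hence $\sigma$, is pinned down by the same data, and the claim follows.

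The main obstacle I anticipate is the sign bookkeeping in the local model: one must carefully relate (i) the intersection sign of the node in $\R_{>0}^2$, (ii) the choice of smoothing compatible with the orientations of $\R\bar D_1,\R\bar D_2$, (iii) the resulting complex orientation / choice of $S$ of the smoothed irreducible curve, and (iv) the parity of $\lrot$ — and to verify that the composite of the two sign flips is trivial. This is the kind of computation where an off-by-one or a sign convention (e.g.\ the clockwise orientation of the mixed quadrants, or the negative sign in Theorem \ref{thm-lrot}) can easily be mishandled, so I would do it very explicitly in a local chart, using a standard quadratic model $xy=\pm t$ for the node and the explicit description of $\beta$ from Lemma \ref{lem-larg} to read off the contribution to the degree of $\beta$ near $q$.
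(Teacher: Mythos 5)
Your proposal has a genuine gap: it tries to decide on which side of the wall a given smoothing lies by tracking orientation data of the abstract curve ($\lrot$, the Welschinger sign, the parity of $\sigma$), but membership in $\RR_{\Delta}(\PP_{t_0+\epsilon})$ versus $\RR_{\Delta}(\PP_{t_0-\epsilon})$ is an \emph{incidence} statement --- it records through which perturbed boundary configuration the curve passes --- and no amount of sign bookkeeping on the curve itself can determine it. The two assertions you need, namely that switching the smoothing type at a fixed node flips the side of the wall and that switching the intersection sign of the node (at fixed smoothing type) also flips it, are not both "standard local computations": the second one is precisely the content of Lemma \ref{sign}, and your proposal offers no mechanism linking the intersection sign of $q'\in I$ in $\R_{>0}^2$ to the direction in which the boundary points $p_1(t),p_2(t)$ move as the node is smoothed. (The appeal to the hyperbolic-versus-solitary node dichotomy from Welschinger invariance is also off target here: all nodes of $I$ are real intersections of two real branches, and both smoothings in question are real hyperbolic smoothings; what distinguishes them is only compatibility with the chosen orientations of $\R \bar D_1$ and $\R \bar D_2$.)

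The paper's proof supplies exactly the missing non-local input by a Bezout-type rigidity argument rather than a local model. One compares two curves that both lie in $\RR_{\Delta}(\PP_{t_0+\epsilon})$: the reference curve $\R D_{q,+}$ and an arbitrary $\R D_{q',s}$. Their images under $\Fr^\Delta$ have Newton polygon $2\Delta$, are tangent to each other at the $m$ points of $\PP_{t_0+\epsilon}$, and meet in pairs of points near each node of $I\setminus\{q,q'\}$; by the Kouchnirenko--Bernstein bound the total intersection number is already exhausted (it equals $2\Area(2\Delta)$, via Pick's formula), so no further intersections near $q$ and $q'$ are allowed, and this forces $s=+1$ exactly when $q'\in I_+$. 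If you want to salvage a local-model approach, you would have to replace your parity heuristic by an actual computation relating the smoothing parameter at $q'$ to the induced displacement of the boundary intersection points along $\dd\R\Delta$ (e.g.\ via the normal-bundle description of the one-parameter family through the $m-2$ fixed points), which is a substantially different and more delicate argument than what you sketch.
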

Accordingly,
a curve obtained by the smoothing of a node from $I_-$ in the orientation-preserving
way, or by the smoothing of nodes from $I_+$ in the orientation-reversing way belongs to
$\RR_{\Delta}(\PP_{t_0-\epsilon})$.
\begin{proof}
Let $\R D_{q',s}\in\RR_{\Delta}(\PP_{t_0+\epsilon})$, $s=\pm 1$,
be the curve obtained by smoothing $\R D$ at a point $q'\in I$ according
to the sign $s$.
Note that $\Fr^{\Delta}(\R D_{q,+})$ and $\Fr^{\Delta}(\R D_{q',s})$
are tangent to each other at $m$ points of $\PP_{t_0+\epsilon}$
and must intersect each other at pairs of points close
to the nodes of $I\setminus\{q,q'\}$.
Tangencies contribute to $2m$ of the intersection,
this number is the integer perimeter
of the polygon $2\Delta$.
The number of nodes in $I\setminus\{q\}$
is equal to 
the number of lattice points inside
the polygon $2\Delta$ by the genus formula
since $\Fr^\Delta(\R D_{q,+})$ is a rational curve
and $2\Delta$ is its Newton polygon.
By Pick's formula the total number of intersection
points resulting from the nodes and tangencies as above
equals
twice the area of $2\Delta$.
By the Kouchnirenko-Bernstein formula, these curves do not have
any other intersection points which implies that $s=+1$ if
$q'\in I_+$, see Figure \ref{figsign}.
\end{proof}
\begin{figure}[h]
\includegraphics[height=45mm]{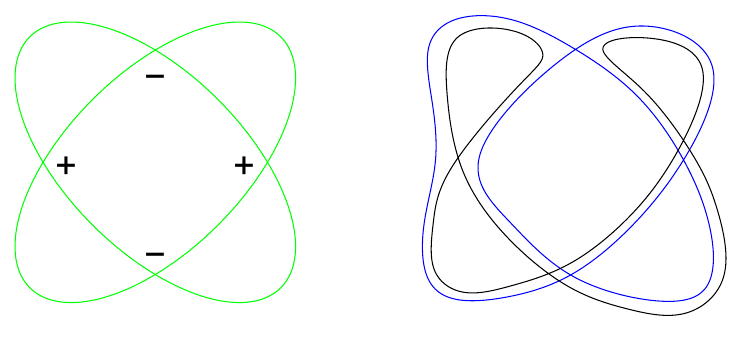}
\caption{
The signs of intersection points of two components of $\R D$ and
the corresponding direction of smoothing.
\label{figsign}}
\end{figure}

Note that any curve in $\RR_\Delta^{\R D}(\PP_{t_0\pm\epsilon})$
is obtained by 
smoothing $\R D$ at a point $q\in I$. 
The quantum index of the result is $\pm k(\R D_1)\pm k(\R D_2)$,
where the signs are determined by the agreement or disagreement
of the orientation of the resulting curve with the chosen orientations
of $\R D_j$.
Since $\#(I_+)=\#(I_-)$, Lemma \ref{sign} implies
that $\RR_\Delta^{\R D}(\PP_{t_0+\epsilon})$
and $\RR_\Delta^{\R D}(\PP_{t_0-\epsilon})$
have the same number of curves of each quantum index.
\end{proof}


\begin{proof}[Proof of Theorem \ref{tildeR}]
By Proposition \ref{wlem} we have
$\tilde R_{\Delta_d,k}(\PP)=\tilde R_{\Delta_d,k}(\PP')$
if there are no reducible curves with the Newton polygon $\Delta_d$
that pass through $\PP_t$.
Also we may assume that if $\R D$ is a reducible curve
with the Newton polygon $\Delta_d$ passing
through $\PP_{t_0}$ then it consists of two components $\R D_1$ and $\R D_2$
that intersect
transversely at a finite set $I$.
Note that the degree of both components, $\R D_1$ and $\R D_2$, must be even,
as a real curve of odd degree must intersect $\dd\rp^2$ in a negative
point as the boundary of the positive quadrant is null-homologous.

We have two smoothings of $\R D$ at $q\in I$
that pass through $\PP_{t_0-\epsilon}$
and $\PP_{t_0+\epsilon}$. One can be oriented in accordance with the orientations
of $\R D_1$ and $\R D_2$ and the other in accordance with the orientation of $\R D_1$,
but opposite to the orientation of $\R D_2$.
The corresponding quantum indices are different by $2k(\R D_2)$.
The index $k(\R D_2)$ is integer since the degree of $\R D_2$ is even.
\end{proof}

\subsection{Indices of real phase-tropical curves}
We start by recalling the basic notions of tropical geometry (cf. \cite{Mi05},
\cite{Mi06}) specializing to the case of plane curves.
Recall that a {\em metric graph} is a topological space
homeomorphic to $\Gamma^\circ=\Gamma\setminus\dd\Gamma$ enhanced with a complete inner
metric. Here $\Gamma$ is a finite graph and $\dd\Gamma$ is the set of its
1-valent vertices. The metric graph is also sometimes called
a tropical curve (while in some other instances the term tropical curve is reserved
for the equivalence class of metric graphs with respect to tropical modifications).
In this paper we require the graph $\Gamma$ to be connected so that $\Gamma^\circ$ is
irreducible as a tropical curve. We assume that $\Gamma$ has a vertex of valence
at least three,
and that $\Gamma$ does not have 2-valent vertices.
The half-open edges of $\Gamma^\circ$ obtained from the closed edges of $\Gamma$
adjacent to $\dd\Gamma$ are called {\em leaves}.

A {\em plane tropical curve} is a proper continuous map
$h:\Gamma^\circ\to\R^2$ such that $h|_E$ is smooth for every edge $E\subset\Gamma$
with $dh(u)\in\Z^2$ for a unit tangent vector $u$ at any point of $E$.
In addition we require the following {\em balancing condition}
at every vertex $v\in\Gamma$
\begin{equation}
\label{bc}
\sum\limits_E dh(u(E))=0,
\end{equation}
where $u(E)$ is the unit tangent vector in the outgoing direction
with respect to $v$ and the sum is taken over all edges $E$ adjacent to $v$.

The collection of vectors $\{dh(u_v)\}_{v\in\dd\Gamma}$ where $u_v$ is a unit
vector tangent to the leaf adjacent to $v$ (and directed towards $v$) is
called {\em the (toric) degree} of $h:\Gamma^\circ\to\R^2$.
The identity \eqref{bc} implies that the sum of all vectors in this collection is zero.
Therefore this collection is dual to a lattice polygon $\Delta\in\Z^2$ which is
well-defined up to translations in $\Z^2$. The polygon $\Delta$ is
determined by $h(\Gamma^\circ)$. 
We call $\Delta$
{\em the Newton polygon} of $h:\Gamma^\circ\to\R^2$.

Tropical curves appear as limits of scaled sequences of complex curves
in the plane. Let $A$ be any set and $\alpha\to t_\alpha\in\R$
be a function unbounded from above (this function is called
{\em the tropical scaling sequence}).
Let $\C C_\alpha\subset\ctor$, $\alpha\in A$,
be a family of complex curves with the Newton polygon $\Delta$.

\begin{defn}
\ignore{
We say that $h:\Gamma^\circ\to\R^2$ is the {\em tropical limit}
of $\C C_\alpha$ if for every $p\in\R^2$ there exists
an open convex neighborhood $p\in U\subset\R^2$ such that for sufficiently large $t_\alpha$
there is a 1-1 correspondence between
the ends of the Riemann surface
$\Log_{t_\alpha}(U)\cap\C C_\alpha$
and
the ends of the open graph
$h^{-1}(U)$
with the following property.

The homology class in $H_1(\ctor;\Z)=\Z^2$ of the loop going clockwise around an end of
$\Log_{t_\alpha}(U)\cap\C C_\alpha$
coincides with $dh(u_v)\in\Z^2$, where $u_v$ is the unit tangent vector
to the corresponding leaf of the open graph $h^{-1}(U)$.
}
We say that a family $\C C_\alpha$ has a {\em phase-tropical limit}
with respect to $t_\alpha$ if
for every $p\in \R^2$ we have
\begin{equation}
\lim\limits_{t_\alpha\to+\infty} t_\alpha^{-p}\C C_\alpha=\Phi_p
\end{equation}
for a (possibly empty) algebraic curve $\Phi(p)\subset\ctor$.
Here $t_\alpha^{-p}\C C_\alpha$ is the multiplicative translation
of the curve $\C C_\alpha$ by $t^{-p}_\alpha\in\ctor$.
The coefficients of the polynomials defining $t_\alpha^{-p}\C C_\alpha$
represent a point in the projective space of dimension $\#(\Delta\cap\Z^2)-1$.
The limit is understood
in the sense of topology of this projective space.
The curve $\Phi_p\subset\ctor$ may be reducible and even non-reduced.

We say that $h:\Gamma^\circ\to\R^2$ is {\em the tropical limit} of
$\C C_\alpha$ with respect to $t_\alpha$
if for a sufficiently small open convex neighborhood $p\in U\subset\R^2$
the irreducible components $\Psi\subset\Phi(p)\subset\ctor$
correspond to the connected components $\psi\subset h^{-1}(U)$ so that
the lattice polygon $\Delta_\psi$ determined by the ends of the open graph $\psi$
coincides with the Newton polygon $\Delta_{\Psi}$ of
the irreducible component $\Psi$ taken with some multiplicity.
The same component $\Psi$ may correspond to several components of $h^{-1}(U)$
so that the some of all resulting multiplicities is equal to the multiplicity of $\Psi$ in $\Phi(p)$.
Each connected component of $h^{-1}(U)$ corresponds to a unique component of $\Phi(p)$.

If $h$ does not contract any edge of $\Gamma^\circ$ to a point then
the open set $\psi\subset\Gamma^\circ$ may contain at most one vertex.
If $v\in\Gamma$ is such a vertex then we call $\Psi$ {\em the phase $\Phi_v$ of the vertex $v$}.
If $\psi$ is contained in an edge $E$ then we call $\Psi$ {\em the phase $\Phi_E$
of the edge $E$}.
The phases $\Phi(E)\subset\ctor$ do not depend on the choice of a point $p\in h(E)$ and are well-defined
up to multiplicative translations by $\rtor$.
The curve $h:\Gamma^\circ\to\R^2$ enhanced with the phases $\Phi_v$ and $\Phi_E$
for its vertices and edges is called {\em the phase-tropical limit of $\C C_\alpha$
with respect to the scaling sequence $t_\alpha\to+\infty$}.

We consider the phases in $\ctor$ that are different by multiplicative translation
by vectors from $(\R_{>0})^2$ equivalent.
\end{defn}

Note that the Newton polygon of the phase $\Phi_E$ of an edge $E$
is an interval. Thus after a suitable change of coordinates in $\ctor$
the (irreducible) curve $\Phi_E$ is given by a linear equation in one variable.
Therefore, $\Phi_E$ is a multiplicative translation of a subtorus
$S^1\approx T_E\subset S^1\times S^1$ in the direction parallel to $h(E)$.

Let us orient $E$. Then $T_E$ as well as the quotient space $B_E=(S^1\times S^1)/T_E$
also acquire an orientation.
The image $\Arg(\Phi_E)$ coincides with $\pi_E^{-1}(\sigma_E)$ for some $\sigma_E\in B_E$,
where $\pi_E : S^1\times S^1\to B_E$ is the projection.
Since $B_E$ is isomorphic to $S^1$ and oriented, we have a canonical
isomorphism $B_E=\Z/2\pi\Z$. Thus, a phase $\Phi_E$ of an oriented edge $E$
of a planar tropical curve is determined by a single argument
$\sigma(E)\in\Z/2\pi\Z$. The change of the orientation of $E$
results in the change of sign of $\sigma(E)$.

Let $v\in\Gamma^\circ$ be a vertex and $E_j$ be the edges adjacent to $v$.
Orient $E_j$ outwards from $v$.
The oriented edges $E_j$ can be associated a {\em momentum} $\mo(E_j)$
with respect to the origin $0\in\R^2$. This is the wedge product of
the vector connecting the origin with a point of $E_j$ and the unit tangent vector
$u(E_j)$ coherent with the orientation. Clearly, it does not depend on the choice
of the point in $E_j$.

Recall that the vertex $v$ is dual to the lattice polygon $\Delta_v$
determined by the integer vectors $dh(u(E_j))$.
The multiplicity is defined as $m(v)=2\Area\Delta_v$, cf. \cite{Mi05}.
\begin{prop}[tropical Menelaus theorem]\label{tropMenelaus}
For any tropical curve $h:\Gamma^\circ\to\R^2$ and
a vertex $v\in\Gamma^\circ$ the momenta
$\mo(E_j)$ of the edges adjacent to $v$ and oriented outwards from $v$
satisfy to the equality
\begin{equation}\label{mo0}
\sum\limits_j \mo(E_j)=0.
\end{equation}

If $\sigma(E_j)\in \Z/2\pi\Z$ are phases of the oriented edges $E_j$
then
\begin{equation}\label{si}
\sum\limits_j w(E_j)\sigma(E_j)=\pi m(v)
\end{equation}
(assuming that $\sigma(E_j)$ appear in the phase-tropical limit
of a family $\C C_\alpha\subset\ctor$ of complex curves).
\end{prop}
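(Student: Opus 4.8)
The plan is to dispatch the two assertions separately: \eqref{mo0} is a purely affine (``tropical'') statement coming straight from the balancing condition, while \eqref{si} is the genuinely complex part, which I would read off from the vertex phase $\Phi_v$ by Vieta's formulas together with Pick's formula. So first I would prove \eqref{mo0}. The momentum $\mo(E_j)$ of an oriented edge may be computed using \emph{any} point of $h(E_j)$: two points of the line $h(E_j)$ differ by a multiple of the tangent vector $dh(u(E_j))$, and the wedge product with $dh(u(E_j))$ is unchanged by such a translation. Taking the common point $h(v)$ for all edges adjacent to $v$ and invoking \eqref{bc}, we get
$$\sum_j\mo(E_j)=\sum_j\bigl(h(v)\wedge dh(u(E_j))\bigr)=h(v)\wedge\Bigl(\sum_j dh(u(E_j))\Bigr)=0.$$
(This is the tropical shadow of the classical fact that a system of concurrent forces has vanishing total moment as soon as its resultant vanishes.)

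For \eqref{si} I would use the hypothesis that the $\sigma(E_j)$ occur in a phase--tropical limit, so that the vertex phase $\Phi_v\subset\ctor$ is an honest complex curve with Newton polygon $\Delta_v$, say $\Phi_v=\{f_v=0\}$ with $f_v(x,y)=\sum_{(i,k)\in\Delta_v\cap\Z^2}c_{i,k}\,x^iy^k$, and so that the edge phase $\Phi_{E_j}$ is the truncation of $f_v$ to the side $E_j^{\ast}\subset\partial\Delta_v$ dual to $E_j$. Write $E_j^{\ast}$ as the lattice segment from a vertex $P$ of $\Delta_v$ to the next vertex $Q$ in the cyclic order on $\partial\Delta_v$ induced by orienting every $E_j$ outward from $v$; let $d=(Q-P)/w(E_j)$ and $u=x^{d_1}y^{d_2}$. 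Up to a monomial factor the truncation becomes the one--variable polynomial $\sum_{r=0}^{w(E_j)}c_{P+rd}\,u^{r}$, whose zeroes are the intersection points of the closure of $\Phi_v$ with the toric divisor $\C E_j^{\ast}$, and whose variable's argument $\arg u$ is exactly the coordinate of the target circle $B_{E_j}$ of $\sigma(E_j)$. Since $c_P,c_Q\neq0$ (vertices of the Newton polygon), Vieta's formula gives that the sum of the arguments of these $w(E_j)$ zeroes equals $w(E_j)\pi+\arg(c_P/c_Q)$; by the normalization of the phase of a (possibly multiple) edge this sum is precisely $w(E_j)\sigma(E_j)$ modulo $2\pi$.

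Summing over all sides of $\Delta_v$, the terms $\arg(c_P)-\arg(c_Q)$ telescope around $\partial\Delta_v$ and cancel, leaving
$$\sum_j w(E_j)\sigma(E_j)\equiv\pi\sum_j w(E_j)\equiv\pi\cdot\#(\partial\Delta_v\cap\Z^2)\pmod{2\pi},$$
because $\sum_j w(E_j)$ is the number of boundary lattice points of $\Delta_v$. By Pick's formula $\#(\partial\Delta_v\cap\Z^2)=2\Area\Delta_v-2\,\#(\Int\Delta_v\cap\Z^2)+2$, which is congruent to $2\Area\Delta_v=m(v)$ modulo $2$, and \eqref{si} follows.

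I expect the main obstacle to be the bookkeeping in the previous two paragraphs rather than any genuinely hard analysis: carefully matching the intrinsic definition of $\sigma(E_j)$ (a fibre of $\Arg(\Phi_{E_j})$ over $B_{E_j}\cong\R/2\pi\Z$, together with its orientation) to the extrinsic description via the roots of the face polynomial; verifying that orienting all edges at $v$ outward does induce a \emph{consistent} cyclic orientation of $\partial\Delta_v$, which is what makes the telescoping legitimate; and handling the weight${}>1$ case, where $\Phi_{E_j}$ is not literally a single subtorus and the identity ``$w(E_j)\sigma(E_j)=$ sum of the arguments of the zeroes'' must be invoked from the conventions for phases of multiple edges. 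Signs are harmless in the end, since an overall sign error multiplies the left side by $-1$ and $-\pi m(v)\equiv\pi m(v)\pmod{2\pi}$.
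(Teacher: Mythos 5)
Your proposal is correct and follows essentially the same route as the paper: \eqref{mo0} is obtained by wedging the balancing condition with the position vector of $v$, and \eqref{si} by applying Vieta's formula to the truncations of the vertex-phase polynomial $f_v$ on the sides of $\Delta_v$, letting the coefficient arguments telescope around $\partial\Delta_v$, and finishing with the parity comparison $\#(\partial\Delta_v\cap\Z^2)\equiv 2\Area(\Delta_v)=m(v)\pmod 2$ from Pick's formula. Your write-up is just a more detailed version of the paper's argument, including the bookkeeping points (orientation of $\partial\Delta_v$, weights $>1$) that the paper leaves implicit.
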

This statement can be viewed as a counterpart
of the ancient Menelaus theorem
(before its generalizations by Carnot and Weil)
stating that three points $D,E,F$ on the extensions of three sides of
a planar triangle $ABC$ are collinear if and only if
\begin{equation}\label{classicalMenelaus}
\frac{|AD|}{|DB|}\frac{|BE|}{|EC|}\frac{|CF|}{|FA|}=-1.
\end{equation}
Here the length is taken with the minus sign if the direction of an interval
(e.g. $|CF|$) is opposite to the orientation of the triangle, see
Figure \ref{FigMenelaus}.
\begin{figure}[h]
\includegraphics[height=45mm]{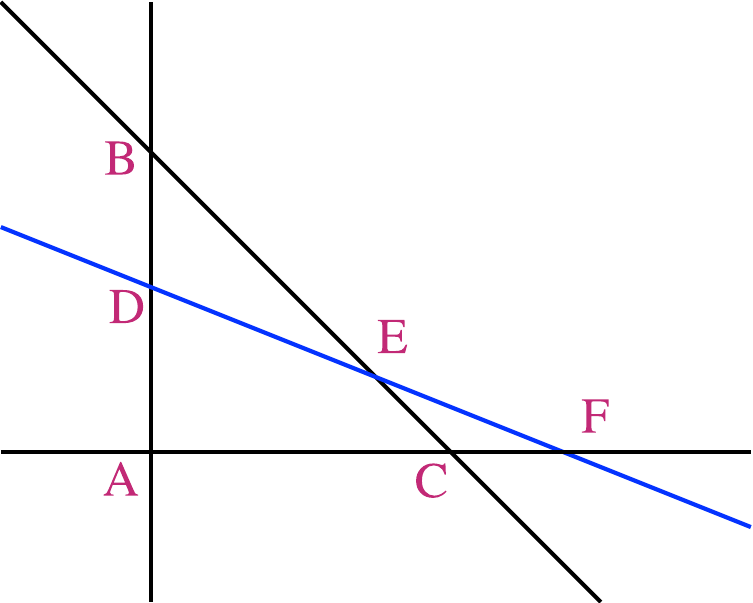}
\caption{
The Menelaus theorem.
\label{FigMenelaus}}
\end{figure}
\begin{proof}
The wedge product of the balancing condition \eqref{bc} with the
vector connecting $0$ and $v$ gives \eqref{mo0}.
To deduce \eqref{si} we consider the polynomial $f_v$ (whose Newton
polygon is $\Delta_v$) defining the phase $\Phi_v\subset\ctor$.
By Vieta's theorem, the product of the roots
cut by $f_v$ on a divisor of $\C\Delta_v$ corresponding to an oriented
side $F\subset\Delta_v$ is $(-1)^{\#(F\cap\Z^2)}$ times the ratio of the coefficients
at the endpoints of $F$. Therefore the sum of the phases of the edges of $\Gamma$
corresponding to $F$ is the argument of this ratio plus $\#(F\cap\Z^2)\pi$.
Since by Pick's formula the parity of $\#(\dd\Delta\cap \Z^2)$ coincides with
that of $m(v)=2\Area(\Delta_v)$ we recover \eqref{si}.
\end{proof}
\begin{coro}\label{coroMenelaus}
We have $\sum\limits_E\mo(E)=0,$
where the sum is taken over all leaves of $h:\Gamma^\circ\to\R^2$
oriented in the outwards direction.
\end{coro}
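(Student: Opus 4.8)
The plan is to obtain the global identity by summing the per-vertex tropical Menelaus relation \eqref{mo0} over all vertices of $\Gamma$ and observing that the bounded edges of $\Gamma$ cancel out of the total. Recall that for every vertex $v\in\Gamma^\circ$ we have $\sum_{E}\mo(E)=0$ by \eqref{mo0}, the sum running over the edges $E$ adjacent to $v$, each oriented away from $v$, and that the momentum of an oriented edge is computed as $\mo(E)=p\wedge dh(u(E))$ for any point $p\in h(E)$ (consistently with the definition preceding \eqref{mo0} and the proof of Proposition \ref{tropMenelaus}). I would sum this identity over all vertices $v$ of $\Gamma$, i.e. over all vertices of $\Gamma^\circ$; the $1$-valent vertices $\dd\Gamma$ are not among them.

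Two elementary properties of $\mo$ make the cancellation work, and both follow from $h|_E$ being affine with constant direction $dh(u(E))$. First, $\mo(E)$ is independent of the chosen point of $h(E)$: if $p,p'\in h(E)$ then $p'-p$ is parallel to $dh(u(E))$, so $p\wedge dh(u(E))=p'\wedge dh(u(E))$. Second, reversing the orientation of $E$ replaces $dh(u(E))$ by its negative and hence negates $\mo(E)$. (If $h$ contracts $E$ to a point, then $dh(u(E))=0$, so $\mo(E)=0$, and such edges may be ignored throughout.)

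Now perform the summation. A bounded, non-leaf edge $E$ joins two vertices $v_1,v_2$ of $\Gamma$; in the relation attached to $v_1$ it is oriented away from $v_1$, i.e. towards $v_2$, while in the relation attached to $v_2$ it is oriented the opposite way, so by the sign property its two contributions cancel. A leaf $E$ is adjacent to exactly one vertex of $\Gamma$ (its finite endpoint), hence appears in exactly one relation, oriented away from that vertex --- which is the outward orientation of the statement. Therefore
\[
\sum_{E\ \text{a leaf}}\mo(E)=\sum_{v\in\Gamma^\circ}\Bigl(\ \sum_{E\ni v}\mo(E)\Bigr)=\sum_{v\in\Gamma^\circ}0=0 .
\]
I do not expect any real obstacle beyond this bookkeeping; the only points requiring care are the two properties of $\mo$ above and the orientation conventions. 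It may be worth remarking that Corollary \ref{coroMenelaus} is the ``wedge with the position vector'' analogue of the fact, recalled after \eqref{bc}, that $\sum_{v\in\dd\Gamma}dh(u_v)=0$: the latter assembles the toric degree of $h$ into the Newton polygon $\Delta$, whereas the former is the global tropical Menelaus constraint carried by the leaves.
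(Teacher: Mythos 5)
Your proof is correct and is essentially the paper's own argument: sum the vertex-wise relation \eqref{mo0} over all vertices of $\Gamma^\circ$ and note that each bounded edge enters twice with opposite orientations, hence cancels, leaving only the leaves. The extra checks you spell out (point-independence and sign-reversal of $\mo$) are exactly the bookkeeping the paper leaves implicit.
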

\begin{proof}
Take the sum of the expression \eqref{mo0} over all vertices of $\Gamma^\circ$.
The momenta of all bounded edges will enter twice with the opposite signs.
\end{proof}

If all curves $\C C_\alpha$ are defined over $\R$ then the phases $\Phi(p)$
must be real for all points $p\in\R^2$. Note, however that in general,
the phase $\Phi_v$ for a vertex $v\in\Gamma^\circ$
does not have to be real as the involution of
complex conjugation may exchange it with $\Phi_{v'}$ for another vertex
$v'\in\Gamma$ with $h(v)=h(v')$.
We say that a vertex $v$ is {\em real} if $\Phi_v$ is defined over $\R$.

Let $\R C_\alpha$ be a scaled sequence of type I curves enhanced
with a complex orientation,
so that a component $S_\alpha\subset\C C_\alpha\setminus\R C_\alpha$ is fixed for
all $\alpha$. Suppose that $\C C_\alpha$ has a phase-tropical limit, and
the orientations of $\R C_\alpha$ agree with some complex
orientations of the real part $\R\Phi(p)$ of the phases $\Phi(p)$.
The quantum index of $\R C_\alpha$ is well-defined if it has real or purely imaginary
coordinate intersection.
Similarly, the phase $\R\Phi_v$ of a real vertex $v$ of the tropical limit
has a well-defined quantum index
if $\sigma(E)\equiv 0\pmod\pi$ for any edge $E$ adjacent to $v$.
\begin{prop}\label{prop-ksum}
For large $t_\alpha$ we have
\begin{equation}
\label{ksum}
k(\R C_\alpha)=\sum\limits_v k(\R\Phi_v),
\end{equation}
where the sum is taken over all real vertices whenever all quantum indices in \eqref{ksum} are well-defined.
\end{prop}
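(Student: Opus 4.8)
The plan is to compute both sides as degrees of squared‑argument maps, using Proposition \ref{2kdeg} (equivalently Lemma \ref{lem-larg}), and then to match preimage counts along the phase‑tropical degeneration. Recall that by Proposition \ref{2kdeg} the number $2k(\R C_\alpha)$ is the degree of $2\Arg\colon \C C_\alpha^\circ\setminus\R C_\alpha^\circ\to(\R/\pi\Z)^2$; since $2\Arg$ sends $\R C_\alpha^\circ$ to the single point $0$, this degree equals the signed count of the set $(2\Arg)^{-1}(\tilde a)\cap\C C_\alpha^\circ$ for any generic $\tilde a\in(\R/\pi\Z)^2$, the sign at a point being that of the Jacobian of $2\Arg|_{\C C_\alpha^\circ}$ there. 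Likewise, for each real vertex $v$, $\R\Phi_v$ is of type I with pure real or imaginary coordinate intersection by hypothesis, so $2k(\R\Phi_v)=\deg\bigl(2\Arg|_{\C\Phi_v^\circ}\bigr)$ is the signed count of $(2\Arg)^{-1}(\tilde a)\cap\C\Phi_v^\circ$. Hence it suffices to produce, for a suitably generic $\tilde a$ and all sufficiently large $t_\alpha$, a sign‑preserving bijection between $(2\Arg)^{-1}(\tilde a)\cap\C C_\alpha^\circ$ and the disjoint union over \emph{all} vertices $v$ of the sets $(2\Arg)^{-1}(\tilde a)\cap\C\Phi_v^\circ$, and then to cancel the contributions of the non‑real vertices.

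For the bijection I would use the local structure of the phase‑tropical limit. Over a small convex neighborhood of an interior point of an edge $E$, the rescaled curve $\C C_\alpha$ converges to the edge phase $\Phi_E$, which is a coset of a one–dimensional subtorus of $S^1\times S^1$ (of direction determined by $dh$); hence $2\Arg(\Phi_E)$ is a geodesic circle in $(\R/\pi\Z)^2$, regardless of $\sigma(E)$. Over a small neighborhood of a point $h(v)$, $\C C_\alpha$ converges to the vertex phase $\Phi_v$, whose argument image is two–dimensional. I would choose $\tilde a$ to be a regular value of every $2\Arg|_{\C\Phi_v^\circ}$ and to lie outside a neighborhood of the finitely many geodesic circles $2\Arg(\Phi_E)$. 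Then, for $t_\alpha$ large, a compactness argument shows that every point of $(2\Arg)^{-1}(\tilde a)\cap\C C_\alpha^\circ$ lies in a vertex piece, bounded away from the gluing necks (which map into shrinking neighborhoods of the $2\Arg(\Phi_E)$), and that each point of $(2\Arg)^{-1}(\tilde a)\cap\C\Phi_v^\circ$ deforms to a unique such point with the same Jacobian sign, because the complex orientation of the vertex piece of $\C C_\alpha^\circ$ converges to that of $\C\Phi_v^\circ$ while $(2\Arg)^{-1}(\tilde a)$ is a fixed oriented submanifold of $\ctor$. Summing signs gives $2k(\R C_\alpha)=\sum_v\deg\bigl(2\Arg|_{\C\Phi_v^\circ}\bigr)$, the sum over all vertices.

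It remains to see that the non‑real vertices contribute nothing. If $v$ is not real, then $\conj(\Phi_v)$ is again one of the vertex phases, so $\conj(\Phi_v)=\Phi_{v'}$ for some vertex $v'\neq v$ with $h(v')=h(v)$, and $v'$ is again non‑real; thus conjugation pairs off the non‑real vertices. Since $2\Arg\circ\conj=\sigma\circ 2\Arg$ with $\sigma$ orientation‑preserving on $(\R/\pi\Z)^2$, while $\conj\colon\C\Phi_v^\circ\to\C\Phi_{v'}^\circ$ reverses the complex orientation, we get $\deg\bigl(2\Arg|_{\C\Phi_{v'}^\circ}\bigr)=-\deg\bigl(2\Arg|_{\C\Phi_v^\circ}\bigr)$, so these two terms cancel (and if $\Phi_v$ itself is reducible with conjugate components, one splits it and argues identically). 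For a real vertex $v$ we have $\deg\bigl(2\Arg|_{\C\Phi_v^\circ}\bigr)=2k(\R\Phi_v)$ by Proposition \ref{2kdeg}. Therefore $2k(\R C_\alpha)=\sum_{v\ \mathrm{real}}2k(\R\Phi_v)$, which is \eqref{ksum}.

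The main obstacle is the middle paragraph: establishing, uniformly in the degenerating family, that for a suitably generic $\tilde a$ the preimage set $(2\Arg)^{-1}(\tilde a)\cap\C C_\alpha^\circ$ is confined to compact parts of the vertex pieces and is in sign‑preserving bijection with the corresponding preimages in the limit curves $\C\Phi_v^\circ$. This needs a uniform estimate on the $2\Arg$‑image of the neck regions (which collapse onto the edge geodesics) together with an implicit‑function/transversality argument on the vertex pieces; everything else is formal given Proposition \ref{2kdeg} and the basic properties of the phase‑tropical limit.
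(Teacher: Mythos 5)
Your proposal is correct and follows essentially the paper's own route: the paper likewise deduces \eqref{ksum} from additivity of the degree of the map $2\Arg$ (via Theorem \ref{thm-larea} and Proposition \ref{2kdeg}) over the vertex phases in the phase-tropical limit, asserting without further detail exactly the neck-localization step that you flag as the remaining analytic work. The only minor difference is the treatment of non-real vertices: you cancel them in conjugate pairs by an orientation count, while the paper notes their contribution vanishes because the signed area of the amoeba of the whole complex curve is zero.
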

\begin{proof}
Additivity of the quantum index with respect to the phases $\Phi_v$ follows
from Theorem \ref{thm-larea} through additivity of the degree of the map $2\Arg$ restricted
to $S\cap\ctor$. Non-real vertices have zero contribution to $k(\R C_\alpha)$ as the signed
area of the amoeba of the whole complex curve is zero.
\end{proof}

\ignore{

Recall that a {\em ribbon structure} of a graph is a choice of a cyclic
orientation of adjacent edges at all its vertices.
Any ribbon structure on $\Gamma$ defines a compact surface with boundary $S$
and a retraction $\rho:S\to\Gamma$. The surface $S$ is constructed

\begin{defn}
A real phase-tropical curve is a metric graph enhanced with a ribbon structure
\end{defn}
}
\begin{proof}[Proof of Theorem \ref{BG}]
Recall the definition of the (tropical) Block-G\"ottsche invariants, see \cite{ItMi},
which refine tropical enumerative invariants of \cite{Mi05}.
Namely, to any 3-valent (open) tropical immersed curve $h:\Gamma^\circ\to\R^2$
we may associate the Laurent polynomial
\begin{equation}\label{BGdef}
n_q(h(\Gamma^\circ))=\prod\limits_v \frac{q^{\frac{m(v)}2}-q^{-\frac{m(v)}2}}{q^{\frac 12}-q^{-\frac 12}},
\end{equation}
where $v$ runs over all vertices $v\in\Gamma$
and $m(v)$ is the multiplicity of the vertex $v$.
The genus of a (connected) tropical curve $\Gamma^\circ$ is
the first Betti number of $\Gamma^\circ$.
In particular, a rational tropical curve is a tree.

\ignore{
Recall that a line $L\subset\R^2$ enhanced with a vector $u$ parallel to $L$
can be associated a {\em momentum} 
with respect to the origin $0\in\R^2$. This is the wedge product of
the vector connecting the origin with a point of $L$ and $u$.
Accordingly, each unbounded edge $E\subset\Gamma$ has a momentum $\mo(E)$
where we take the line
containing $h(E)$ for $L$ and the image of the unit tangent vector to $E$ directed
towards infinity under $dh$ for $u$.
The absolute value part of the tropical Menelaus
may be stated as the following proposition, cf. \cite{Ueda}.
\begin{prop}[tropical Menelaus theorem, the absolute value part]
We have
\begin{equation}\label{trMenelaus}
\sum\limits_E\mo(E)=0,
\end{equation}
where the sum is taken over all unbounded edges $E$ of $\Gamma$.
\end{prop}
\begin{proof}
The balancing condition, see \cite{Mi05}, ensures that the sum of the images
of all unite vectors adjacent to the same vertex under $dh$ is zero as
long as all curves are oriented outwards from the vertex.
Therefore, \eqref{trMenelaus} holds if $\Gamma$ consists of a single vertex.
In the general case we take the sum of the corresponding expression
over all vertices of $\Gamma$.
All bounded edges have the zero contribution to this sum as they enter twice
with the opposite sign.
\end{proof}
}

Let us fix
a collection $\mu=\{m_j\}_{j=1}^m$, $m=\#(\dd\Delta\cap\Z^2)$,
of generic real numbers
subject to
the condition
$\sum\limits_{j=1}^m\mu_j=0$. This means that $\mu_j$, $j=1,\dots,m-1$ are
chosen generically, and $\mu_m$ is determined from our condition.

If $h:\Gamma^\circ\to\R^2$ is a tropical curve with the Newton polygon $\Delta$ then
we number its leaves so that the first $m_1$ leaves
are dual to the side $E\subset\dd\Delta$, the second $m_2$ to the side
$E_2\subset\dd\Delta$ and so on with the last $m_n$ leaves dual to $E_n$.
We say that $h:\Gamma\to\R^2$
passes through the $\dd\T\Delta$-points determined by $\mu$
if the $j$th unbounded edge of $\Gamma$ has the momentum $\mu_j$.
\ignore{
By Proposition 2.23 of \cite{Mi05} all immersed
rational tropical curves with the Newton
polygon $\Delta$ form a polyhedral space of dimension $m-1$. All facets of this
space are formed by 3-valent curves.
Thus any tropical rational curve with the Newton polygon $\Delta$ passing through
the $\dd\T\Delta$-points
determined by $\mu$ must be 3-valent and immersed (as otherwise
we may find another tropical rational curve with the same image but
with vertices of higher valence).
}
Note that
a leaf $E\subset\Gamma^\circ$ must have the momentum $\mo(E)$
if it passes through a point $p_E$ on the oriented line parallel
to the vector $(dh)u(E)$ with the momentum $\mo(E)$.
Thus a generic choice of the momenta ensures that $h:\Gamma^\circ\to\R^2$
passes through a generic collection of $m-1$ points in $\R^2$.
Thus we have only finitely many rational tropical curves with the Newton polygon $\Delta$
passing through the $\dd\T\Delta$ points determined by $\mu$ by Lemma 4.22 of \cite{Mi05}
(as the number of combinatorial types of tropical curves with the given Newton polygon $\Delta$
is finite). By Proposition 4.11 of \cite{Mi05} all these tropical curves are simple in the sense
of Definition 4.2 of \cite{Mi05}.

{\em The Block-G\"ottsche number associated to $\mu$} is
\begin{equation}\label{BGDeltadef}
\ntropm_\Delta=\ntropm_\Delta(\mu)=\sum\limits_{h:\Gamma^\circ\to\R^2}
n_q(h(\Gamma^\circ)),
\end{equation}
where the sum is taken over all $h:\Gamma^\circ\to\R^2$ passing
through the $\dd\T\Delta$ points determined by $\mu$.
Independence of $\ntropm_\Delta$ from $\mu$ can be proved in the same
way as in \cite{ItMi}. Also it follows from Theorem \ref{Rinv}
once we prove coincidence of $R_{\Delta}$ and $\ntropm_{\Delta}(\mu)$.

A toric divisor $\C E_j\subset\C\Delta$ is the compactification of the torus $\C^\times$
obtained by taking the quotient group of $\ctor$ by the subgroup defined by the side $E_j\subset\Delta$.
Thus a configuration $\PP=\{p_j\}_{j=1}^m\subset\dd\C\Delta$
is given by a collection of $m$ nonzero complex numbers as well as an attribution of the points to
the toric divisors. This collection is real if the corresponding numbers are real
and positive if these number are positive.

We set $\PP^t=\{p_1^t,\dots,p_m^t\}\subset\dd\R\Delta$ be the configuration of points with the
same toric divisor attribution as $\PP$, and given by the positive numbers $\{t^{2\mu_j}\}$, $t>1$.
By Proposition 8.7 of \cite{Mi05} the amoebas of rational complex curves with the Newton polygon $\Delta$
passing through $(\Fr^\Delta)^{-1}(\PP_t)$ converge when $t\to+\infty$
to tropical curves passing through the $\dd\T\Delta$-points determined by $\mu$.
Proposition 8.23 of \cite{Mi05} determines the number of complex curves with amoeba in
a small neighborhood of a rational tropical curve $h:\Gamma^\circ\to\R^2$ passing through
any choice of points $\tilde p_j^t\in (\Fr^\Delta)^{-1}(p_j^t)$, $j=1,\dots,m-1$, for large $t$,
while Remark 8.25 of \cite{Mi05}
determines the number of the corresponding real curves.
E.g. if the weights of all edges of $\Gamma^\circ$ are odd we have a single real curve for
any choice of $\tilde\PP^t=\{\tilde p_j^t\}$. In general, some choices of $\tilde\PP_t$
may correspond to no real solutions, while others may correspond to multiple solutions.
We claim that nevertheless
there are $2^{m-1}$ different real curves whose amoeba is close to $h:\Gamma^\circ\to\R^2$
with the image under $\Fr^\Delta$ passing through $\PP^t$ for large $t$.
Thus we have $2^m$ different oriented curves. We show this by induction on $m$ as follows.

If $\Gamma^\circ$ has a single vertex $v$ (so that $m=3$)
then there are 4 different real rational phase $\Phi_v$
which differ by the deck transformations of the map $\Fr^\Delta$.
Thus we have 8 different oriented real rational phases in this case.
The positive logarithmic rotation number for half of them
is positive, for the other half is negative.
Adding each new 3-valent vertex $v'$ to the tree $\Gamma$ doubles
the number of oriented real phases as there are two ways to
attach the phase for $v'$: so that the logarithmic rotation number
of the resulting real curve will increase by one
and so that it will decrease by one.
\ignore{
The phases of a simple tropical curve with leaves of weight 1
are determined by the phases $\sigma(E)\in\Z/2\pi\Z$ of its leaves.
The symmetry group $\Z_2^2$ acts on the collections of such phases by reflections in
$\rtor\subset\R \Delta$. For each leaf $E$ there is a unique non-trivial reflection
$I_E:\rtor\to\rtor$ preserving $\Phi_E$.
}
\ignore{
Let $E\subset\Gamma^\circ$ be a bounded edge and $\sigma(E)$ be a choice for its real phase
(equal to 0 or $\pi$).
Each connected component of $\Gamma^\circ\setminus E$ determines a tropical curve $h':\Gamma'\to\R^2$
by extending $E$ to a leaf.
The group $\Z^2_2$ acting on $\rtor$ by reflection
of the coordinates contains the unique non-trivial
reflection that preserves the real phase of $E$
(this reflection is determined by the parity
of the slope of $E$).
Oriented real curves close to $h'$ come in pairs that are different by
the reflection $I_E$.
}
Inductively we get 4 real oriented curves
for each of the $2^{m-2}$ possible distribution of signs for
the vertices of $\Gamma^\circ$.

For each vertex $v$
the real phase $\R\Phi_v$ 
is the image of a line by
a multiplicative-linear map of determinant $m(v)$ by Corollary 8.20 of \cite{Mi05}.
Therefore
$k(\R\Phi_v)=\pm\frac{m(v)}2$,
where the sign is determined by the degree
of the logarithmic Gau{\ss} map.
According to our sign convention \eqref{sigma} each oriented real curve comes with the sign
equal to the number of negative vertices.
Thus by Proposition \ref{prop-ksum} the contribution of $h:\Gamma\to\R^2$ to $R_\Delta(\PP^t)$ for large $t$
is $\prod\limits_v(q^{\frac m2}-q^{-\frac m2})$ which coincides with the numerator of
the Block-G\"ottsche
multiplicity \eqref{BGdef}.
\end{proof}

\begin{ack}
The results of this paper are inspired by a discussion
with Ilia Itenberg, Maxim Kontsevich and Ilia Zharkov 
on the Spring Equinox of 2015 before a traditional
IHES volleyball game as well as some discussions with Ivan Cherednik, Sergey Galkin and Yan Soibelman
on some other occasions. In particular, an example of
area computation for
holomorphic disks with boundary in a union of two Lagrangian subvarieties
explained by Maxim Kontsevich was especially helpful.
The author would like to thank everybody for a fruitful exchange
of ideas, as well as the IHES for its hospitality.
The paper has tremendously benefited
from many helpful remarks of the referee
to whom the author is sincerely thankful.

Research is supported in part by the grants 141329, 159240, 159581 and the NCCR SwissMAP
project of the Swiss National Science Foundation as well as by the Chaire d'Excellence
program of the Fondation Sciences Math\'ematiques de Paris.
\end{ack}

\bibliography{b}
\bibliographystyle{plain}

\end{document}